\newcommand{\vv}{\mathbf v}
\newcommand{\calA}{\mathcal{A}}
\renewcommand{\O}{\Omega}
\newcommand{\R}{\mathbb R}
\newcommand{\pa}{\partial}
\newcommand{\di}{\mathrm{div}}
\newcommand{\na}{\nabla}
\newcommand{\dx}{dxdt}
\renewcommand{\div}{\text{div}}
\newcommand{\eps}{\varepsilon}
\newcommand{\ue}{u_\eps}
\newcommand{\uee}{u_{\eps'}}
\newcommand{\Ge}{g_\eps}
\newcommand{\fe}{f_\eps}
\newcommand{\LQ}[2]{L^{#1}(0,T;L^{#2}(\Omega_t))}
\newcommand{\LW}[3]{L^{#1}(0,T;W^{#2,#3}_0(\Omega_t))}
\newcommand{\intO}{\int_{\Omega_t}}
\renewcommand{\varrho}{\vartheta}
\newtheorem{theorem}{Theorem}[section]
\newtheorem{definition}{Definition}[section]
\newtheorem{lemma}{Lemma}[section]
\newtheorem{remark}{Remark}[section]
\title[Quasilinear equations in moving domains and $L^1$ data]{Quasilinear parabolic equations with first order terms and $L^1$-data in moving domains}
\author[D. Lan]{Do Lan}
\address{Do Lan\hfill\break
Department of Mathematics, Thuyloi University, 175 Tay Son Street, Hanoi, Vietnam}
\email{dolan@tlu.edu.vn}
\author[D.T. Son]{Dang Thanh Son}
\address{Dang Thanh Son\hfill \break
Foundation Sciences Faculty, Telecommunications University, 101 Mai Xuan Thuong, Nha Trang, Vietnam}
\email{dangthanhson@tcu.edu.vn}
\author[B.Q. Tang]{Bao Quoc Tang}
\address{Bao Quoc Tang \hfill\break
	Institute of Mathematics and Scientific Computing, University of Graz, 
	Heinrichstrasse 36, 8010 Graz, Austria}
\email{quoc.tang@uni-graz.at, baotangquoc@gmail.com}
\author[L.T. Thuy]{Le Thi Thuy}
\address{Le Thi Thuy \hfill\break
Department of Mathematics, Electric Power University, 235 Hoang Quoc Viet, Hanoi, Vietnam}
\email{thuylt@epu.edu.vn, thuylephuong@gmail.com}
\begin{document}
	\subjclass[2010]{35K59, 35K90, 35K92}
\keywords{Quasilinear parabolic equations; Moving domains; $L^1$-data; Weak solutions; Aubin-Lions lemma}
	\begin{abstract}
		The global existence of weak solutions to a class of quasilinear parabolic equations with nonlinearities depending on first order terms and integrable data in a moving domain is investigated. The class includes the $p$-Laplace equation as a special case. Weak solutions are shown to be global by obtaining appropriate estimates on the gradient as well as a suitable version of Aubin-Lions lemma in moving domains.
	\end{abstract}
	\maketitle
	%\tableofcontents
	
\section{Introduction}
Problems defined on a domain which changes its shape in time have recently attracted a lot of attention from mathematical community since not only they lead to interesting mathematical questions, but also
they arise naturally in physics, biology, chemistry and many other fields. Examples include the studies of pattern formation on evolving surfaces \cite{BEM11,GMMSV11}, of surfactants in
two-phase flows \cite{GLS14}, of dealloying by surface dissolution of a binary alloy \cite{EE08}, of a diffusion interface model for linear surface partial differential equations \cite{ES09}, or of modeling and simulation of cell mobility \cite{ESV12}. We refer the interested reader to the extensive review paper \cite{KK15}. In this paper, we study the global existence of a quasilinear parabolic equation in moving domains, i.e. domains with shapes evolving in time. The equation contains a quasilinear diffusion operator, which includes the $p$-Laplacian as a special case, has a nonlinearity depending on the zero and first order terms, and has external force and initial data which are only integrable.

%(\bao{Add references about systems in moving domains.})

%The interesting features, and also the main difficulties, of this equation is that it contains a nonlinearity depending on the zero and first order terms, and integrable initial data and external force.
\medskip
To precisely state the problem under consideration, we consider a bounded domain $\O_0\subset \R^d$, $d\geq 1$, with smooth boundary $\pa\O_0$. Let $\vv: \R^d\times \R \to \R^d$ be a smooth and compactly supported vector field and $\zeta: \R^d \times \R \to \R^d$ be its corresponding flow, i.e. $\zeta$ solves
\begin{equation*}
	\pa_t\zeta(x,t) = \vv(\zeta(x,t),t), \qquad \zeta(x_0,0) = x_0
\end{equation*}
for any $x_0 \in \R^d$. Note that for each fixed $x$, the mapping $t\mapsto \zeta_t(x)$ is an integral curve of $\vv$ and for fixed $t$, the mapping $x_0 \mapsto \zeta_t(x_0)$ is a diffeomorphism. Assuming that $\O_0 \subset \mathrm{supp}(\vv)$, we define $\Omega_t = \zeta_t(\O_0)$ and the non-cylindrical domain as
\begin{equation*}
	Q_T:= \bigcup_{t\in (0,T)}\O_t\times \{t\} = \bigcup_{t\in (0,T)}\zeta_t(\O_0)\times \{t\},
\end{equation*}
\begin{equation*}
	\Sigma_T:= \bigcup_{t\in (0,T)}\pa\O_t\times \{t\} = \bigcup_{t\in (0,T)}\zeta_t(\pa\O_0)\times\{t\}.
\end{equation*}
We choose an open and bounded subset $\widehat{\O}$ of $\R^d$ such that $\cup_{t\in[0,T]}\Omega_t \subset \widehat{\O}$ and let $\widehat{Q}:=\widehat{\O}\times (0,T)$.

We also need to define time-space spaces in moving domains. Let $\{X(t)\}_{t\in [0,T]}$ be a family of Banach spaces, then we define 
\begin{equation*}	
	L^p(0,T;X(t)) = \{f:Q_T\to \mathbb R:\; f(t)\in X(t) \text{ for a.e. } t\in (0,T)\}
\end{equation*}
with the norm
\begin{equation*}
	\|f\|_{L^p(0,T;X(t))} = \left(\int_0^T\|f(t)\|_{X(t)}^pdt\right)^{\frac 1p} <+\infty.
\end{equation*}
Very common in this paper we use $X(t) = L^q(\Omega_t)$ or $X(t) = W_0^{1,q}(\Omega_t)$. In particular, when $p=q$, then we write simply $L^p(Q_T)$ instead of $L^p(0,T;L^p(\Omega_t))$.

\medskip
	The main goal of the present paper is to study the global existence of the following quasilinear problem
	\begin{equation}\label{e1}
		\begin{cases}
			\partial_t u - \div (a(x,t,\nabla u)) + \div (u\vv) + g(x,t,u, \nabla u) = f, &(x,t)\in Q_T,\\
			u(x,t) = 0, &(x,t)\in \Sigma_T,\\
			u(x,0) = u_0(x), &x\in \O_0,
		\end{cases}
	\end{equation}
	with the external force $f\in L^1(Q_T)$ and initial data $u_0 \in L^1(\O_0)$. The nonlinear diffusion $a$ is assumed to satisfy
%	satisfies some growth conditions, so that $\na\cdot a(x,t,\na u)$ contains in particular the Laplacian $\Delta u$ or the $p$-Laplacian $\na\cdot(|\na u|^{p-2}\na u)$ for some $1<p\ne 2$. More precisely, we assume  satisfying
	\begin{enumerate}[label=(A\theenumi),ref=A\theenumi] %[leftmargin=10mm]
	\item\label{A0} $a: \widehat{Q}\times \R^d \to \R^d$ is a Carath\'eodory function; 
	
	\item\label{A2} there exists $p>\frac{2d+1}{d+1}$ such that, for $(x,t)\in \widehat{Q}$ and $\xi \in \R^d$,
	\[	
		|a(x,t,\xi)| \leq \varphi(x,t) + K|\xi|^{p-1}
	\]
	where $\varphi \in L^{p'}(\widehat{Q})$, $1/p+1/p' = 1$ and $K\geq 0$;
	
	\item\label{A3} there exists $\alpha>0$ such that
	\[
		a(x,t,\xi)\xi \geq \alpha|\xi|^p,
	\]
	where $(x,t)\in \widehat{Q}$ and $\xi\in\R^d$;

	\item\label{A1} for almost $(x,t)\in \widehat{Q}$ and all $\xi, \xi' \in \R^d$,
	\[
		(a(x,t,\xi) - a(x,t,\xi'))(\xi - \xi') \geq \frac{1}{\Theta(x,t,\xi,\xi')}|\xi-\xi'|^\theta \quad \text{ and } \quad a(x,t,0) = 0;
	\]			
	for $\theta>1$, and $\Theta$ is a nonnegative function which satisfies
	\begin{equation}\label{beta}
		|\Theta(x,t,\xi,\xi')| \leq C(1+ |\xi| + |\xi'|)^{\varrho}
	\end{equation}
	where 
	\begin{equation}\label{varrho}
		\varrho < (\theta-1)\left(p-\frac{d}{d+1}\right).
	\end{equation}
	\end{enumerate}		
%	\bao{$\zeta$ is already used before for defining the moving domains.}
	The nonlinearity $g: \widehat{Q}\times \mathbb R\times \mathbb R^d \to \mathbb R$ satisfies: $g$ is continuous with respect to the third and fourth variables, and
%	The nonlinearity and first order term $g: \R \times \R^d \to \R$ satisfies
		\begin{enumerate}[label=(G\theenumi),ref=G\theenumi] %[leftmargin=10mm]
		\item\label{G1} it holds
		\begin{equation*}
			\lambda g(x,t,\lambda, \xi) \geq 0 \quad \text{ for all } \quad \lambda\in \R, \xi \in \R^d;
		\end{equation*}
		
		\item\label{G2}
		$g$ has a {\it subcritical} growth on the gradient, i.e. there exists $0 \leq \sigma < p$ such that
		\begin{equation*}
			|g(x,t,\lambda,\xi)| \leq h(|\lambda|)(\gamma(x,t) + |\xi|^\sigma)
		\end{equation*}
		where $\gamma \in L^1(\widehat{Q})$, and $h$ is an increasing function in $\mathbb R_+$.
		\end{enumerate}	
	
	Let us briefly discuss about the above conditions. The conditions \eqref{A0}--\eqref{A1} of $a$ assure that it contains the important case of $p$-Laplacian, i.e. $$a(x,t,\xi)\equiv a_p(\xi) = |\nabla \xi|^{p-2}\nabla \xi \quad \text{ for } \quad p>\frac{2d+1}{d+1}.$$ Moreover, the technical condition \eqref{A1} is weaker than the usual {\it strong monotonicity} condition
	\begin{equation*}\label{strong_monotone}
		(a(x,t,\xi) - a(x,t,\xi'))(\xi - \xi') \geq C|\xi - \xi'|^p,
	\end{equation*} 
	for some $C>0$, but still stronger than the mere monotone property condition, i.e.
	\begin{equation}\label{mon}
	(a(x,t,\xi) - a(x,t,\xi'))(\xi - \xi') \geq 0.
	\end{equation} 	 
	We also remark that the condition \eqref{G2} allows $g$ to have arbitrary growth in the zero order term, as long as it has the suitable sign stated in \eqref{G1}. A typical example of $g$ is
	\begin{equation*}
		g(x,t,u,\na u) = Cu^{2k+1}(\gamma(x,t) + |\na u|^{\sigma})
	\end{equation*}
	where $k\in \mathbb N$ is arbitrary and $0\leq \sigma < p$. 
	
	\medskip
	Elliptic or parabolic equations with irregular data such as $L^1$ or bounded measure appear frequently in applications and therefore are of interest and importance. Concrete examples include elliptic systems modeling electronical devices \cite{GH94}, the Fokker-Planck equation arising from populations dynamics \cite{GS98}, models of turbulent flows in oceanography and climatology \cite{Lew97}, incompressible flows with small Reynolds number \cite{Lio96}, or Keller-Segel or Shigesada-Kawasaki-Teramoto type systems \cite{winkler2019role}. Global existence of weak or renormalized solutions to special cases of \eqref{e1} in fixed domains has been studied extensively in the literature. Let us mention several related works: in \cite{BG89,boccardo1997nonlinear}, the authors considered \eqref{e1} where conditions \eqref{A0}--\eqref{A1} are imposed, but the function $g$ is either zero or does not have the first order term; similar results were shown in \cite{Bla93} assuming \eqref{mon} instead of \eqref{A1}; the case when $g$ depends on the first order term was considered in e.g. \cite{GS01, andreu2002existence}, but the second order term therein is a linear elliptic operator, for instance $\mathrm{div}(a(x,t,\na u)) = \Delta u$; when $p>1$ arbitrary, one can show global renormalized solutions \cite{BM97} (see also Remark \ref{remark1}). Related results are also obtained for systems without the first order terms \cite{BS05}.
	
	\medskip
	The global existence of solutions to \eqref{e1} in moving domains with $L^1$-data, up to our knowledge, is not studied therefore is the main motivation of our paper. We would like also to emphasize that, even in the case of a fixed domain, our results extend that of \cite{BG89} and \cite{GS01}. Related results for quasilinear parabolic problems in time-dependent domains can also be found in e.g. \cite{CNO17,bogelein2018existence}.
		
	\medskip
	The main goal of this paper is to prove the global existence of a weak solution to \eqref{e1} under the conditions \eqref{A0}--\eqref{A1}, \eqref{G1}--\eqref{G2} and data $f\in L^1(Q_T)$ and $u_0\in L^1(\Omega_0)$. To state the main result, we first give the precise definition of a weak solution.
	\begin{definition}[Weak solutions]\label{def:sol}
		Let $T>0$ be arbitrary. 
		A function
		\begin{equation*}
			u\in C([0,T];L^1(\O_t))\cap \left(\underset{1\leq q < p - \frac{d}{d+1}}{\bigcap}L^q(0,T;W_0^{1,q}(\O_t))\right)
		\end{equation*}
		is called a weak solution to \eqref{e1} on $(0,T)$ if $g(x,t,u,\nabla u)\in L^1(Q_T)$ and for all test functions $\psi\in C([0,T];W^{1,\infty}_0(\Omega_t))\cap C^1((0,T);L^{\infty}(\Omega_t))$, the following weak formulation holds
		\begin{equation*}
			\begin{aligned}
			&\int_{\Omega_T}u(T)\psi(T)dx  - \int_0^T\int_{\O_t}u\psi_t \dx\\
			&+ \int_0^T\int_{\O_t}[a(x,t,\na u)\cdot \na \psi - u\vv\cdot \na \psi + g(x,t,u,\nabla u)\psi]\dx\\
			&= \int_{\O_0}u_0\psi(0)dx + \int_0^T\int_{\O_t}f\psi \dx.
			\end{aligned}
		\end{equation*}
	\end{definition}
	All the terms above are obviously well-defined except for the term containing $a(x,t,\na u)\cdot \na \psi$. From the growth assumption \eqref{A2} of $a$, and the fact that $u\in L^q(0,T;W_0^{1,q}(\Omega_t))$ for all $1\leq q < p - d/(d+1)$, it follows that $a(\cdot,\cdot,\na u)\in L^s(Q_T)$ for all $1\leq s < 1 + \frac{1}{(p-1)(d+1)}$, and therefore, the integration $\int_{Q_T}a(x,t,\na u)\cdot \na \psi dxdt$ makes sense since $\na \psi\in L^\infty(Q_T)$.
	\begin{remark}\label{remark1}
		The condition $p > (2d+1)/(d+1)$ is needed to define the weak solution. When $p \leq (2d+1)/(d+1)$, we can only obtain $\nabla u \in L^q(Q_T)^d$ for $q\in (0,1)$. In this case, one can either show the existence of renormalized solutions, see e.g. \cite{BM97}, or weak solutions belonging to $L^r(0,T;W^{1,q}_0(\Omega_t))$ for $r,q$ are different, see e.g. \cite{boccardo1997nonlinear} where \eqref{e1} was studied in a cylindrical domain for all $p>1$ but without the nonlinearity $g$. These two directions go beyond the scope of this paper and therefore are left for upcoming research.
	\end{remark}
	The main result of this paper is the following theorem.
	\begin{theorem}[Global existence of weak solutions]\label{thm:main}
	    Assume that the vector field $\vv\in C^1(\R^d,C^0(\R))$. 
		Assume the conditions \eqref{A0}--\eqref{A1} and \eqref{G1}--\eqref{G2}. Then for any $u_0 \in L^1(\O_0)$ and any $f\in L^1(Q_T)$, there exists a global weak solution $u$ to \eqref{e1} on $(0,T)$ as in Definition \ref{def:sol}.
	\end{theorem}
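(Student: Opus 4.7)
My plan is to follow the Boccardo--Gallou\"et strategy adapted to the moving-domain setting. First, regularize the data: set $f_\eps := T_{1/\eps}(f)$ and $u_{0,\eps} := T_{1/\eps}(u_0)$ where $T_k(s) = \max(-k,\min(s,k))$, so that $f_\eps \in L^\infty(Q_T)$ and $u_{0,\eps} \in L^\infty(\Omega_0)$ with convergence back in $L^1$. Similarly truncate $g$ via $g_\eps(x,t,\lambda,\xi) := g(x,t,\lambda,\xi)/(1+\eps|g(x,t,\lambda,\xi)|)$, which preserves the sign condition \eqref{G1} and is bounded. After pulling back by the flow $\zeta_t$, the approximate problem becomes a quasilinear parabolic equation with bounded data and bounded nonlinearity on the fixed cylinder $\Omega_0\times(0,T)$; classical results (Lions' theory for monotone operators, Galerkin, Schauder-type fixed point for the bounded $g_\eps$) produce a weak solution $u_\eps \in L^p(0,T;W_0^{1,p}(\Omega_t))\cap L^\infty(Q_T)$.

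\textbf{A priori estimates.} Next, testing the equation for $u_\eps$ against $T_k(u_\eps)$ and using the coercivity \eqref{A3}, the sign condition \eqref{G1}, and an integration-by-parts on the $\di(u_\eps \vv)$ term (which is controlled because $\vv\in C^1$ and $\nabla\cdot\vv$ is bounded), I would obtain
\[
\alpha\intTO |\na T_k(u_\eps)|^p\,\dx + \intTO |g_\eps(x,t,u_\eps,\na u_\eps)\, T_k(u_\eps)|\,\dx \leq C\,k\bigl(\|f\|_{L^1(Q_T)} + \|u_0\|_{L^1(\Omega_0)}\bigr),
\]
uniformly in $\eps$. The standard Boccardo--Gallou\"et interpolation between this truncation estimate and the $L^\infty_t L^1_x$ bound on $u_\eps$ then upgrades to a uniform bound for $u_\eps$ in $L^q(0,T;W_0^{1,q}(\Omega_t))$ for every $q<p-d/(d+1)$, and correspondingly for $u_\eps$ in $L^r(Q_T)$ below the associated Sobolev exponent. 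In addition, one may test against $\mathrm{sgn}(u_\eps)$ (approximated) to extract a uniform $L^1(Q_T)$ bound on $g_\eps(x,t,u_\eps,\na u_\eps)$, exploiting \eqref{G1}.

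\textbf{Compactness and passage to the limit.} From the equation I would estimate $\pa_t u_\eps$ in $L^1(0,T;W^{-1,s}(\Omega_t)+L^1(\Omega_t))$ for small $s$, using the growth bound \eqref{A2} on $a(\cdot,\cdot,\na u_\eps)$, the transport term $u_\eps\vv$, and the $L^1$ bound on $g_\eps$. Combined with the spatial estimate, a moving-domain Aubin--Lions lemma (realized by pulling back along $\zeta_t$ so that the classical Aubin--Lions applies, then pushing forward) yields strong convergence $u_\eps \to u$ in $L^q(Q_T)$ and a.e., for $q$ below the critical exponent. To identify the limit of the nonlinear term $a(x,t,\na u_\eps)$, I would use a Boccardo--Murat / Dall'Aglio--Orsina argument: test with $T_\delta(u_\eps-T_k(u))$, exploit the weak monotonicity \eqref{A1} together with the growth bound \eqref{beta}--\eqref{varrho} on $\Theta$ to show that $\na u_\eps \to \na u$ in measure (and hence a.e. up to subsequence); this is where the balance $\varrho<(\theta-1)(p-d/(d+1))$ is exactly what is needed so that the compensating error terms are controlled by the available gradient integrability. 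Vitali's theorem then passes $a(x,t,\na u_\eps)\to a(x,t,\na u)$ in $L^1$, and the subcritical growth \eqref{G2} combined with the $L^1$ equi-integrability of $g_\eps$ lets Vitali handle the zero-order nonlinearity as well.

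\textbf{Main obstacle.} I expect the central technical difficulty to be the a.e.\ convergence of $\na u_\eps$ under the merely weak monotonicity \eqref{A1}, compounded by two features absent from the classical fixed-domain setting: (i) the test function $T_\delta(u_\eps-T_k(u))$ is not compactly supported in any single $\Omega_t$, so one must work with the pullback $u_\eps\circ\zeta_t - T_k(u\circ\zeta_t)$ on $\Omega_0$ and control the extra terms produced by $\pa_t\zeta_t=\vv$, which combine with the transport term $\di(u\vv)$; (ii) the exponent $\varrho$ in \eqref{varrho} must be handled sharply against the integrability $p-d/(d+1)$ coming from the $L^1$-data, leaving essentially no slack. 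Once these are overcome, the rest of the argument---weak convergence of the linear transport term via a strong-weak pairing, and verification of the initial condition in $C([0,T];L^1(\Omega_t))$ using the time-derivative bound---should proceed by standard arguments.
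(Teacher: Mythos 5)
Your overall plan is sound and identifies all the right ingredients: the $(T_k,T_{1/\eps})$ regularization, the truncation $g_\eps = g/(1+\eps|g|)$, the truncation energy estimate with coercivity \eqref{A3} plus sign condition \eqref{G1}, the Boccardo--Gallou\"et interpolation giving $L^q(0,T;W^{1,q}_0(\Omega_t))$ for $q < p - d/(d+1)$, the $L^1$ bound on $g_\eps(u_\eps,\nabla u_\eps)$, a moving-domain Aubin--Lions compactness step, and the a.e.\ gradient convergence via the weak monotonicity \eqref{A1} balanced against \eqref{beta}--\eqref{varrho}. You also correctly flag that the a.e.\ convergence of $\nabla u_\eps$ under the weakened monotonicity is the pivotal technical step. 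However, you take a genuinely different route at the two points where the moving domain enters: you propose to \emph{pull back by the flow $\zeta_t$} to a fixed cylinder both to obtain the approximate solution and to run Aubin--Lions, whereas the paper deliberately stays on the moving domain throughout. The paper builds the approximate solution by a time-slicing scheme (freezing the domain on each subinterval $[t_j,t_{j+1}]$, following Calvo--Novaga--Orlandi) and proves a new Aubin--Lions lemma directly on $Q_T$, with the compactness driven by a dual estimate $|\int_{Q_T} u_n\,\partial_t\psi| \leq C\sup_{t}\|\psi\|_{H^m(\Omega_t)}$ tailored to $L^1$ right-hand sides. The authors explicitly explain in the introduction that they avoid the pullback because the transformed operator inherits Jacobian-dependent coefficients that would force one to re-verify the structural hypotheses anew. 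Indeed, under pullback $a$ becomes $\tilde a(y,t,\eta) = J(y,t)^{-1}a(\zeta_t(y),t,J(y,t)^{-\top}\eta)$ with $J = D\zeta_t$, and it is not obvious (nor checked in your sketch) that the crucial estimate in \eqref{A1}, namely $(\tilde a(\eta)-\tilde a(\eta'))\cdot(\eta-\eta') \geq |\eta - \eta'|^\theta/\tilde\Theta$ with $\tilde\Theta$ satisfying \eqref{beta}--\eqref{varrho}, survives the conjugation by $J^{-\top}$. This verification is a real gap in the pullback route and would need careful algebra. Finally, for the gradient convergence you propose testing with $T_\delta(u_\eps - T_k(u))$ (a Boccardo--Murat style argument), while the paper instead proves $\{\nabla u_\eps\}$ is Cauchy in measure by subtracting the equations for $u_\eps$ and $u_{\eps'}$ and testing with $T_\delta(u_\eps - u_{\eps'})$; the latter has the practical advantage of never requiring $T_k(u)$ to be an admissible test function for the approximate problem, which is cleaner in the moving-domain setting where test function spaces vary in $t$. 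Both variants exploit \eqref{A1} and \eqref{varrho} in the same way, so this is a minor divergence, but the pullback-based compactness and existence steps need the Jacobian-compatibility check before the argument closes.
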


	Let us describe the main ideas in proving Theorem \ref{thm:main}. To treat moving domains, one can transform the problem into the case of fixed domains and then study the new equation, with the cost of some additional terms. Usually these additional terms depend significantly on the problem itself, and therefore each problem needs to be treated separately. As an attempt to have a more unified mechanism, a different approach is to derive a mechanism to work on the moving domains directly, that is to establish parallel tools for moving domains corresponding to that of fixed domains. This research direction has been investigated by many authors (see e.g. \cite{AES15,AET18,MB08,Vie14}).
	
	In this paper, we adapt the second approach to prove Theorem \ref{thm:main}, meaning that we treat \eqref{e1} directly on the non-cylindrical domain $Q_T$. More precisely, first, we consider an approximation of \eqref{e1} in which the data is approximated by $f_\eps \in L^\infty(Q_T)$ and by $u_0\in L^\infty(\Omega_0)$. Moreover, we also regularize the nonlinearity $g_\eps = g(1+\eps|g|)^{-1}$ which is bounded for any fixed $\eps>0$. Thanks to this regularization, we can use the method from \cite{CNO17} to obtain the global existence of an approximate solution $\ue$. The next goal is to derive estimates of this approximate solution uniformly in $\eps$. In order to do that, due to the low regularity of the data, we refine the analysis in \cite{GS01} to adapt to the case of quasilinear problem \eqref{e1}. Once the uniform estimates for $\ue$ are obtained, we would like to pass to the limit as $\eps\to 0$, which consequently requires an Aubin-Lions lemma in the case of moving domains. A similar lemma has been shown in different works (see e.g. \cite{Mou16} or \cite{Fuj70}), but they are not applicable to our situation. Therefore, we prove a new Aubin-Lions lemma in moving domains, which allows us to first obtain the almost everywhere convergence $\ue \to u$ and then consequently $\|\ue - u\|_{L^1(Q_T)} \to 0$. Due to the dependence of the nonlinearity on $\na u$, this convergence is not yet enough. By using the ideas from \cite{GS01}, we utilize the assumptions \eqref{G1} and \eqref{G2} to show that the convergence $\na \ue \to \na u$ holds almost everywhere. This in turn helps to get $g_\eps(x,t,\ue,\na \ue) \to g(x,t,u,\na u)$ and $a(x,t,\na\ue) \to a(x,t,\na u)$ in appropriate spaces, and eventually to obtain $u$ to be a weak solution to \eqref{e1}.
	
	\medskip
	{\bf The rest of this paper is organized as follows:} In the next Section, we derive uniform a-priori estimates for approximate solutions, which are needed to pass to the limit in Section \ref{proof} to obtain the weak solution of \eqref{e1}. The Appendix \ref{appendix1} and \ref{appendix2} provide the existence of an approximate solution and a proof of the Aubin-Lions lemma in moving domains respectively.

	\medskip
	{\bf Notation.} We will use in this paper the following set of notations.
	\begin{itemize}
		\item Recall that we simply write $L^p(Q_T)$ instead of $\LQ{p}{p}$.
		\item The double integration $\int_0^T\int_{\Omega_t}dxdt$ is written using the shorthand notation $\int_{Q_T}dxdt$.
		\item We usually write $C = C(\alpha, \beta,\gamma,\ldots)$ to indicate that the constant $C$ depends on the arguments $\alpha, \beta, \gamma$, etc.
		\item As we will use it frequently in the paper, for fixed $T>0$ we write
		\begin{equation*}
		    \|\vv\|_\infty:= \|\vv\|_{L^\infty(Q_T)} \quad \text{ and } \quad \|\div \vv\|_{\infty}:= \|\div \vv\|_{L^\infty(Q_T)},
		\end{equation*}
		which are well-defined when $\vv\in C^1(\R^d,C^0(\R))$.
	\end{itemize}
	
\section{Uniform estimates}\label{approximate}
In this section, we consider an approximate problem to \eqref{e1} and derive uniform {\it a priori} estimates for the approximate solution. These estimates play a crucial role in passing to the limit to obtain a weak solution to \eqref{e1}. For simplicity we write $g(u,\na u)$ instead of $g(x,t,u,\na u)$.

\medskip
Fix an arbitrary time horizon $T>0$. As usual we regularize the initial data $u_0$ and the external term $f$ by more regular data $u_{0,\varepsilon}\in L^\infty(\Omega_0)$ and $f_\varepsilon \in L^{\infty}(Q_T)$ for $\varepsilon>0$, such that
\begin{equation}\label{u0f}
	\lim_{\eps \to 0}\|u_{0,\eps} - u_0\|_{L^1(\Omega_0)} = 0 \quad \text{ and } \quad \lim_{\eps \to 0}\|f_{\eps} - f\|_{L^1(Q_T)} = 0,
\end{equation}
and
\begin{equation}\label{increasing}
	\|u_{0,\eps}\|_{L^1(\Omega_0)} \leq \|u_0\|_{L^1(\Omega_0)} \quad \text{ and } \|\fe\|_{L^1(Q_T)} \leq \|f\|_{L^1(Q_T)}.
\end{equation}
Moreover, we also regularize the nonlinear first order term by a bounded nonlinearity, namely, for $\eps > 0$,
\begin{equation*}
\Ge(w,\na w):= \frac{g(w,\na w)}{1+\varepsilon|g(w,\na w)|}.
\end{equation*}
Note that for any fixed $\varepsilon>0$, we have
\begin{equation*}
	|\Ge(w,\na w)| \leq \frac 1\eps \quad \text{ for all } \quad (x,t)\in Q_T\quad \text{ and all } \quad w.
\end{equation*}
The approximate problem reads as, 
\begin{equation}\label{approx}
	\begin{cases}
		\pa_t\ue - \div (a(x,t,\na \ue)) + \div(\ue\vv) + g_\eps(\ue,\na \ue) = f_\eps, &(x,t)\in Q_T,\\
		\ue(x,t) = 0, &(x,t)\in \Sigma_T,\\
		\ue(x,0) = u_{0,\varepsilon}(x), &x\in \Omega_0.
	\end{cases}
\end{equation}

\begin{definition}[Weak solutions to \eqref{approx}]
	A weak solution to \eqref{approx} on $(0,T)$ is a function $\ue\in C([0,T];L^p(\Omega_t))\cap L^p(0,T;W_0^{1,p}(\Omega_t))$ with $\partial_t\ue \in L^{p'}(0,T;W^{-1,p'}(\Omega_t))$, where $W^{-1,p'}(\Omega_t) = (W_0^{1,p}(\Omega_t))^*$, such that
	\begin{align*}
		\int_0^T\langle \partial_t \ue, \phi \rangle_{W^{-1,p'}, W_0^{1,p}}dt + \int_0^T\int_{\Omega_t}a(x,t,\na\ue)\cdot \na \phi dxdt\\
		-\int_0^T\int_{\O_t}\ue \vv \cdot\na\phi dxdt + \int_0^T\int_{\Omega_t}g_\eps(\ue,\na\ue)\phi dxdt = \int_0^T\int_{\Omega_t}\fe \phi dxdt
	\end{align*}
	for all test function $\phi \in \LW{p}{1}{p}$.
\end{definition}

The global existence of a weak solution to \eqref{approx} can be obtained by the slicing technique in e.g. \cite{CNO17} with suitable, slight modifications. For the sake of completeness, we sketch the main steps of the proof, and postpone it to the Appendix \ref{appendix1} in order to not interrupt the train of thought. 
\begin{theorem}[Existence of a global solution to the approximate problem]\label{thm:approximate}
Fix $T>0$. For any $u_{0,\eps}\in L^\infty(\Omega_0)$ and $\fe\in  L^\infty(Q_T)$, there exists a weak solution to \eqref{approx} on $(0,T)$.
\end{theorem}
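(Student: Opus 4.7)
The plan is to adapt the Rothe time-discretization (slicing) scheme of \cite{CNO17} directly on the non-cylindrical domain. I partition $[0,T]$ by $t_k=k\tau$ with $\tau=T/N$, and transport iterates between consecutive moving domains via the flow: given $u^{k-1}$ on $\O_{t_{k-1}}$, define $\tilde u^{k-1}$ on $\O_{t_k}$ by $\tilde u^{k-1}:=u^{k-1}\circ\zeta_{t_{k-1}}\circ\zeta_{t_k}^{-1}$. Then I seek $u^k\in W_0^{1,p}(\O_{t_k})$ solving the stationary problem
\[
    \frac{u^k-\tilde u^{k-1}}{\tau}-\div a(\cdot,t_k,\na u^k)+\div(u^k\vv(\cdot,t_k))+\Ge(u^k,\na u^k)=f_\eps^k
\]
on $\O_{t_k}$ with homogeneous Dirichlet data, where $f_\eps^k$ is a time-average of $\fe$ over $[t_{k-1},t_k]$.

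Each stationary step is solvable by Leray-Lions / pseudo-monotone operator theory: by \eqref{A0}--\eqref{A3} the principal operator $v\mapsto -\div a(\cdot,t_k,\na v)+\tau^{-1}v$ is bounded, coercive, and of monotone type (via \eqref{A1}); the drift term $\div(\cdot\,\vv(\cdot,t_k))$ is a bounded linear perturbation in $W^{-1,p'}(\O_{t_k})$; and since $\Ge$ is uniformly bounded by $1/\eps$ and continuous in $(\lambda,\xi)$, it contributes a compact perturbation through Rellich's embedding. Moreover, because $\fe,u_{0,\eps},\Ge$ all lie in $L^\infty$, a Stampacchia-type truncation yields an $L^\infty(\O_{t_k})$ bound for $u^k$ independent of $k$. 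Next, I form the piecewise constant and piecewise affine interpolants $u^\tau,\widehat u^\tau$ in time, and test the discrete equation with $u^k$. Using coercivity \eqref{A3} together with the sign condition \eqref{G1} (which ensures $\Ge(u^k,\na u^k)u^k\geq 0$), I derive uniform-in-$\tau$ bounds of the form
\[
    \|u^\tau\|_{L^\infty(0,T;L^2(\O_t))}+\|u^\tau\|_{L^p(0,T;W_0^{1,p}(\O_t))}+\|\pa_t\widehat u^\tau\|_{L^{p'}(0,T;W^{-1,p'}(\O_t))}\leq C_\eps.
\]

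Finally, I pass to the limit $\tau\to 0$. The moving-domain Aubin--Lions lemma proved in Appendix \ref{appendix2} yields a subsequence with $u^\tau\to\ue$ a.e.\ and strongly in $L^p(Q_T)$; Minty's monotonicity trick based on \eqref{A1} identifies the weak limit of $a(\cdot,\cdot,\na u^\tau)$ as $a(\cdot,\cdot,\na\ue)$, while the uniform bound $\|\Ge\|_\infty\leq 1/\eps$ permits dominated convergence for the first-order nonlinear term. The main obstacle will be the bookkeeping across moving slices: one must estimate $\|\tilde u^{k-1}-u^{k-1}\|_{L^2}$ and the time variation of the Jacobian $\det D\zeta_t$ uniformly so that the discrete time difference in the energy estimate telescopes with the correct sign, and so that $\widehat u^\tau-u^\tau\to 0$ in an appropriate norm. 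These control estimates follow from the assumption $\vv\in C^1(\R^d,C^0(\R))$ via elementary ODE bounds on the flow $\zeta$. Once this is in place, the pseudo-monotone limiting procedure carries through with only cosmetic changes from the cylindrical case treated in \cite{CNO17}, and the limit $\ue$ is the desired weak solution of \eqref{approx}.
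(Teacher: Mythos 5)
Your proposal takes a genuinely different discretization scheme from the paper. The paper's Appendix~\ref{appendix1} freezes the domain on each subinterval $I_j=[t_j,t_{j+1})$ and solves a full \emph{parabolic} problem on $\Omega_{t_j}\times I_j$, gluing the resulting $w^{(j)}$ into a piecewise-in-time function $w^\Delta$ on the polygonal spacetime domain $\Omega^\Delta$, and then uses Simon's compactness criterion on open cylinders $C\Subset Q_T$ together with a diagonal argument and the fact that $\Omega^\Delta\to Q_T$ in Hausdorff sense. You instead propose a Rothe scheme, solving \emph{stationary} Leray--Lions problems at discrete times and transporting the previous iterate onto $\Omega_{t_k}$ via $\zeta_{t_{k-1}}\circ\zeta_{t_k}^{-1}$. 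Your transport map is cleaner than the paper's ``restrict to the overlap $\Omega_{t_j}\cap\Omega_{t_{j-1}}$ and extend by zero'', and the elliptic-per-step structure makes the existence of each iterate elementary. The trade-off is precisely the bookkeeping you flag: one needs to quantify $\|\tilde u^{k-1}-u^{k-1}\circ(\text{identity})\|_{L^2}$ and the Jacobian drift so the telescoping in the energy estimate closes; and, more importantly, the piecewise interpolants $u^\tau,\widehat u^\tau$ are not naturally functions on $Q_T$, so applying Lemma~\ref{AL-lemma} directly is not innocuous --- the paper sidesteps this by arguing on cylinders strictly inside $Q_T$ where $\Omega^\Delta$ eventually covers $C$, which is closer to how one would have to proceed in your scheme too.

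There is one genuine gap at the end of your argument. You invoke Minty's trick to identify the weak limit of $a(\cdot,\cdot,\na u^\tau)$ and then ``dominated convergence'' for $\Ge(u^\tau,\na u^\tau)$. But dominated convergence requires pointwise (a.e.) convergence of $\Ge(u^\tau,\na u^\tau)$, hence of $\na u^\tau$; the uniform bound $\|\Ge\|_\infty\le 1/\eps$ alone only gives weak-$*$ compactness, which does not identify the limit for a nonlinear integrand. Minty's trick, even with \eqref{A1}, only identifies the weak limit of $a(\cdot,\cdot,\na u^\tau)$ and does not by itself produce a.e.\ convergence of $\na u^\tau$. The paper closes exactly this gap with a Cauchy-in-measure argument for the gradients (the $\mathcal A_1,\ldots,\mathcal A_4$ decomposition in Step~2(iii), mirroring Lemma~\ref{gradients}), where \eqref{A1} and the growth bound \eqref{beta}--\eqref{varrho} on $\Theta$ are used quantitatively. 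You need an analogue of that step: after the energy estimates, subtract two Rothe iterates, test with a truncation $T_\delta$, use \eqref{A1} and H\"older with $\Theta$ to get $\na u^\tau$ Cauchy in measure, and only then conclude a.e.\ convergence of $\na u^\tau$ so that $\Ge(u^\tau,\na u^\tau)\to\Ge(\ue,\na\ue)$ a.e.\ and in $L^1$ by bounded convergence.
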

The focus of this section is therefore to obtain a-priori estimates of solutions to \eqref{approx} which are {\it uniform in $\varepsilon$.} We divide the section further into two subsections, in which the first one shows uniform bounds of approximate solutions in Sobolev spaces, while the second provides uniform bounds of the nonlinearity $\Ge(\ue,\na\ue)$.

\subsection{Uniform bounds of approximate solutions}
The following lemma is the main result of this subsection.
\begin{lemma}\label{UniformBounds}
	There exists a constant $C(T)$ depending on $T, \vv$, $\|u_0\|_{L^1(\O_0)}$ and $\|f\|_{L^1(Q_T)}$ but {\normalfont independent of $\eps$} such that
			\begin{equation*}
				\|\ue \|_{\LW{q}{1}{q}} \leq C(T)
			\end{equation*}
			for all $1 \leq q < p - d/(d+1)$.
		\end{lemma}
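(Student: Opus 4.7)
The plan is to carry out a Boccardo--Gallou\"et truncation argument adapted to the non-cylindrical domain $Q_T$. I write $T_k(s) = \max(-k,\min(k,s))$ for the truncation at level $k\geq 1$ and $\Psi_k(s) = \int_0^s T_k(r)\,dr$ for its nonnegative convex primitive, which satisfies $\tfrac12 T_k(s)^2 \leq \Psi_k(s) \leq k|s|$ and $0 \leq s T_k(s) - \Psi_k(s) \leq \Psi_k(s)$ for every $s \in \R$.

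The first step is to test \eqref{approx} with $T_k(\ue) \in \LW{p}{1}{p}$. Coercivity \eqref{A3} yields $a(x,t,\na\ue)\cdot\na T_k(\ue) \geq \alpha|\na T_k(\ue)|^p$, and the sign assumption \eqref{G1}, applied to the fact that $T_k(\ue)$ has the same sign as $\ue$, forces $\Ge(\ue,\na\ue)\,T_k(\ue) \geq 0$. For the transport contribution, the chain rule for $\Psi_k$ combined with Reynolds' transport identity (using $\ue=0$ on $\Sigma_T$) produces
\[
\int_{\Omega_t}\bigl[\pa_t\ue + \div(\ue\vv)\bigr]T_k(\ue)\,dx = \frac{d}{dt}\int_{\Omega_t}\Psi_k(\ue)\,dx + \int_{\Omega_t}\bigl(\ue T_k(\ue) - \Psi_k(\ue)\bigr)\div\vv\,dx.
\]
The last integrand is nonnegative and bounded by $\|\div\vv\|_\infty\Psi_k(\ue)$, and since $\int_{\Omega_0}\Psi_k(u_{0,\eps}) \leq k\|u_0\|_{L^1(\Omega_0)}$ and $\int_{Q_T}|\fe T_k(\ue)| \leq k\|f\|_{L^1(Q_T)}$, Gronwall's inequality delivers
\[
\sup_{t\in[0,T]}\int_{\Omega_t}\Psi_k(\ue)\,dx + \alpha\int_{Q_T}|\na T_k(\ue)|^p\,\dx \leq C(T)\,k
\]
uniformly in $\eps$. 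Setting $k=1$ and invoking $\Psi_1(s) \geq |s| - \tfrac12$ gives the byproduct $\|\ue\|_{L^\infty(0,T;L^1(\Omega_t))} \leq C(T)$, while the general $k$ case gives $\|T_k(\ue)\|_{L^\infty(0,T;L^2(\Omega_t))}^2 \leq C(T)k$.

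The next step is to implement the level-set argument. I would combine the $L^\infty(L^2)$ bound with the $L^p(L^{p^*})$ bound on $T_k(\ue)$ --- the latter coming from Sobolev embedding on $\Omega_t = \zeta_t(\Omega_0)$, whose constant is uniform in $t$ because $\zeta_t$ is a smooth diffeomorphism on $[0,T]$ --- via the parabolic Gagliardo--Nirenberg interpolation to get $\|T_k(\ue)\|_{L^{p(d+2)/d}(Q_T)}^{p(d+2)/d} \leq C(T)k^{1+p/d}$. Chebyshev then yields the level-set decay $|\{|\ue|>k\}| \leq C(T)k^{-r}$ with $r := (p(d+1)-d)/d > 0$. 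For any $\lambda>0$ and $k\geq 1$, the splitting $\{|\na\ue|>\lambda\} \subseteq \{|\na T_k(\ue)|>\lambda\} \cup \{|\ue|>k\}$ and Chebyshev on the first set give
\[
|\{|\na\ue|>\lambda\}| \leq C(T)\bigl(k\lambda^{-p} + k^{-r}\bigr),
\]
which optimized at $k = \lambda^{p/(r+1)}$, using $r+1 = p(d+1)/d$, produces the Marcinkiewicz estimate $|\{|\na\ue|>\lambda\}| \leq C(T)\lambda^{-(p-d/(d+1))}$, uniform in $\eps$. Hence $\na\ue$ is bounded in $L^q(Q_T)$ for every $q < p - d/(d+1)$, and the Poincar\'e inequality on $\Omega_t$ (with constant uniform in $t$) together with $\ue|_{\Sigma_T}=0$ upgrades this to the claimed $\LW{q}{1}{q}$ bound. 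The assumption $p > (2d+1)/(d+1)$ ensures $p - d/(d+1) > 1$, so admissible $q > 1$ exist.

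The main technical obstacle will be the rigorous use of $T_k(\ue)$ as a test function over the moving domain: although $\pa_t\ue \in L^{p'}(0,T;W^{-1,p'}(\Omega_t))$ and $T_k(\ue) \in \LW{p}{1}{p}$ so the pairing $\int_0^T\langle\pa_t\ue,T_k(\ue)\rangle\,dt$ is well-defined, identifying it with $\int_{\Omega_T}\Psi_k(\ue(T))\,dx - \int_{\Omega_0}\Psi_k(u_{0,\eps})\,dx$ plus the $\div\vv$ correction requires a time-regularization argument (\`a la Landes) combined with a Reynolds transport formula for the relevant Sobolev-regular class on $Q_T$. Once this identity is established, the sign condition \eqref{G1} absorbs the nonlinearity $\Ge$ for free, and the remainder of the argument is a classical truncation/Marcinkiewicz computation.
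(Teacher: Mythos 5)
Your proposal is correct, but it follows a genuinely different route from the paper. The paper splits the work into three pieces: an $L^\infty(0,T;L^1(\Omega_t))$ bound obtained by testing with $T_1(\ue)$ (Lemma \ref{prove_cond1}); slab estimates $\int_{B_n}|\na\ue|^p\leq C_0+C_1\int_{E_n}|\na\ue|\,dxdt$ obtained by testing with the slab truncations $\phi_n(\ue)$ (Lemma \ref{prove_cond2}); and an abstract summation lemma (Lemma \ref{lem:inter}) that converts these into the $L^q(0,T;W^{1,q}_0(\Omega_t))$ bound via a series decomposition over $n$, H\"older's inequality, and an absorption argument with a large cut-off $K$. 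The extra term $C_1\int_{E_n}|\na\ue|$ there is forced by the convection term tested against $\phi_n(\ue)$, which cannot be integrated exactly, and it is precisely what makes the paper's Lemma \ref{lem:inter} more delicate than the classical Boccardo--Gallou\"et summation. You avoid this entirely: by pairing $\div(\ue\vv)$ with $T_k(\ue)$ and integrating exactly through the primitive $H_k(s)=sT_k(s)-\Psi_k(s)$, sandwiched between $0$ and $\Psi_k(s)$, Gronwall absorbs the transport contribution and yields a truncated energy bound that is genuinely \emph{linear} in $k$ (the paper's Young-inequality treatment in \eqref{e19} only gives $O(k^{p'})$, which would spoil your subsequent optimization — so your sharper treatment of the convection term is essential to your route, and it is correct). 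From there the parabolic Gagliardo--Nirenberg interpolation, Chebyshev, and the optimization $k=\lambda^{p/(r+1)}$ give the Marcinkiewicz bound $|\{|\na\ue|>\lambda\}|\lesssim\lambda^{-(p-d/(d+1))}$, which is slightly stronger intermediate information than the paper extracts. Your exponent arithmetic checks out. Two remarks: first, the rigorous justification of the chain rule and Reynolds transport identity for $T_k(\ue)$ over the moving domain, which you flag as the main obstacle, is carried out in the paper (the approximation argument leading to \eqref{t0}) and extends to your refined identity without difficulty; second, the paper's slab estimates \eqref{est_1} are reused later (Lemmas \ref{bound_non}, \ref{lem:g}, \ref{lemg_varepsilon}) to control the nonlinearity $\Ge$ and prove equi-integrability, so if one adopted your route for this lemma one would still want to derive those slab estimates separately for the rest of the argument.
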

The proof of this lemma is long and technical and is therefore divided into several steps. As a preparation, we need a lemma about Sobolev embeddings in moving domains.
\begin{lemma}[Sobolev embeddings]\label{embedding}
	Fix $T>0$. Then there exists a constant $C_{\Omega,T}$ depending on $T$ and $\vv$ such that
	\begin{equation}\label{desired}
		\|u\|_{L^{q^*}(\Omega_t)} \leq C_{\Omega,T}\|\nabla u\|_{L^q(\Omega_t)} \quad \text{ for all } t\in [0,T]\quad  \text{ and } \quad u\in W_0^{1,q}(\Omega_t)
	\end{equation}
	where $q<d$ and 
	\begin{equation*}
		q^* = \frac{dq}{d-q}.
	\end{equation*}
\end{lemma}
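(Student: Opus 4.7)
The plan is to reduce to the standard Gagliardo--Nirenberg--Sobolev inequality on either $\mathbb{R}^d$ or the fixed reference domain $\Omega_0$, with the latter approach matching the dependence $C_{\Omega,T}$ stated in the lemma.

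The quickest route uses extension by zero. For fixed $t \in [0,T]$ and any $u \in W_0^{1,q}(\Omega_t)$, let $\bar u$ denote the extension of $u$ by zero to all of $\mathbb{R}^d$; by standard density arguments, $\bar u \in W^{1,q}(\mathbb{R}^d)$ and moreover $\|\bar u\|_{L^r(\mathbb{R}^d)} = \|u\|_{L^r(\Omega_t)}$ for any $r$, and $\|\nabla \bar u\|_{L^q(\mathbb{R}^d)} = \|\nabla u\|_{L^q(\Omega_t)}$. Applying the classical Gagliardo--Nirenberg--Sobolev inequality on $\mathbb{R}^d$ to $\bar u$ yields \eqref{desired} with a constant depending only on $d$ and $q$, which a fortiori proves the claimed bound.

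If one prefers to work with the flow $\zeta_t$ directly, in the spirit of the rest of the paper, the argument proceeds as follows. For $u \in W_0^{1,q}(\Omega_t)$, define the pull-back $v(y) := u(\zeta_t(y))$ for $y \in \Omega_0$, so $v \in W_0^{1,q}(\Omega_0)$, and use the chain rule $\nabla_y v(y) = (\nabla_x u)(\zeta_t(y))\,\nabla_y \zeta_t(y)$ together with the change-of-variables formula involving the Jacobian $J_t(y) := |\det \nabla_y \zeta_t(y)|$. Two standard ODE computations give the required uniform control on $[0,T] \times \Omega_0$: first, Liouville's formula $\partial_t \log J_t = (\div \vv)(\zeta_t(\cdot),t)$ implies $e^{-T\|\div \vv\|_\infty} \leq J_t \leq e^{T\|\div \vv\|_\infty}$, and second, $\partial_t \nabla_y \zeta_t = (\nabla \vv)(\zeta_t,t)\,\nabla_y\zeta_t$ together with Grönwall's inequality gives uniform bounds on $|\nabla_y \zeta_t|$ and $|\nabla_y \zeta_t^{-1}|$ in terms of $T$ and $\|\nabla \vv\|_{L^\infty(\widehat{Q})}$. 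Therefore the $L^{q^*}$ and $L^q$ norms on $\Omega_t$ are equivalent to the corresponding norms of $v$ and $\nabla v$ on $\Omega_0$, up to constants depending only on $T$ and $\vv$; applying the Sobolev embedding on the fixed domain $\Omega_0$ then yields \eqref{desired}.

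There is no serious obstacle here: the only ingredient beyond the classical Sobolev embedding is the uniform boundedness of the flow and its Jacobian on the compact interval $[0,T]$, which is immediate from the assumption $\vv \in C^1(\R^d,C^0(\R))$ together with its compact support. I would present the extension-by-zero proof as the main argument since it is one line and yields a constant that in fact depends only on $d$ and $q$.
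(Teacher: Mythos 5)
Your proposal is correct, and your primary argument (extension by zero plus the Gagliardo--Nirenberg--Sobolev inequality on $\mathbb{R}^d$) is genuinely different from, and in fact cleaner than, the paper's proof. The paper pulls back to $\Omega_0$ via the flow $\zeta_t$, uses the classical Sobolev embedding on $\Omega_0$, and then pushes forward again, controlling the two changes of variables by uniform bounds on the Jacobian $|\det D\zeta_t|$ over $[0,T]$ --- this is precisely your second, ``flow'' argument, and your version is actually more careful, since the paper's displayed computation silently drops the chain-rule factor $D\zeta_t$ that relates $\nabla_y\,[u\circ\zeta_t]$ to $(\nabla_x u)\circ\zeta_t$; you correctly note that this factor must be controlled via Gr\"onwall, not just the Jacobian determinant. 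Your extension-by-zero route buys something the paper's does not make explicit: because $u\in W_0^{1,q}(\Omega_t)$, the trivial extension lies in $W^{1,q}(\mathbb{R}^d)$ with identical norms, so the Sobolev constant depends only on $d$ and $q$ and is entirely independent of $t$, $T$, and $\vv$ --- a strictly sharper conclusion than the lemma claims and simpler to prove. The only reason to prefer the paper's flow-based argument is that it illustrates the transfer-to-reference-domain technique that the paper reuses elsewhere (e.g.\ in the proof of Lemma \ref{L1convergence}) where extension by zero is not available.
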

\begin{proof}
	The classical Sobolev embedding gives
	\begin{equation*}
		\|u\|_{L^{q^*}(\Omega_0)} \leq C(\Omega_0)\|\nabla u\|_{L^q(\Omega_0)}.
	\end{equation*}
	Since $\zeta \in C([0,T];C^1(\mathbb R^d))$, there exists $a(T), b(T)$ such that
	\begin{equation*}
		a(T)\leq |\det(D\zeta_t)(x)| \leq b(T) \quad \text{for all} \quad t\in [0,T].
	\end{equation*}
	Now, for $t\in [0,T]$, we know that $\Omega_t = \zeta_t(\Omega_0)$. Therefore,
	\begin{equation*}
	\begin{aligned}
		\left(\int_{\Omega_t}|u(x)|^{q^*}dx\right)^{\frac{1}{q^*}} &= \left(\int_{\Omega_0}|u(\zeta_t(y))|^{q^*}|\det(D\zeta_t)|dy\right)^{\frac{1}{q^*}}\\
		&\leq b(T)^{\frac{1}{q^*}}C(\Omega_0)\left(\int_{\Omega_0}|\na u(\zeta_t (y))|^{q}dy\right)^{\frac 1q}\\
		&\leq b(T)^{\frac{1}{q^*}}a(T)^{-\frac 1q}C(\Omega_0)\left(\int_{\Omega_t}|\na u(x)|^{q}dx\right)^{\frac 1q}
	\end{aligned}
	\end{equation*}
	which proves the desired estimate \eqref{desired}.
\end{proof}
	\begin{lemma}\label{lem:inter}
				Assume that $\ue \in \LW{p}{1}{p}$ satisfies
				\begin{equation}\label{cond1}
					\sup_{t\in (0,T)}\intO |\ue| dx \leq \beta,
				\end{equation}
				and for each $n\in \mathbb N$,
				\begin{equation}\label{cond2}
					\int_{B_n}|\na \ue|^pdxdt \leq C_0 + C_1\int_{E_n}|\na \ue|dxdt
				\end{equation}
				for some $\beta, C_0, C_1 >0$ {\normalfont independent of $\eps$} where
				\begin{equation}\label{def_BnEn}
					B_n = \{(x,t)\in Q_T: n\leq |\ue(x,t)| \leq n+1 \} \; \text{ and } \; E_n = \{ (x,t)\in Q_T: |\ue(x,t)| > n+1 \}.
				\end{equation}
				Then there exists $C(T,p,q,\beta,C_0,C_1)$ depending on $T,p,q,\beta, C_0$ and $C_1$, but {\normalfont independent of $\eps$}, such that
				\begin{equation}\label{e2_1}
					\|\ue\|_{\LW{q}{1}{q}} \leq C(T,p,q,\beta,C_0,C_1)
				\end{equation}
				for all $1\leq q < p - \frac{d}{d+1}$.
		\end{lemma}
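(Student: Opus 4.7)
The plan is to follow the classical Boccardo--Gallou\"et--Murat level-set strategy, adapted to moving domains via Lemma \ref{embedding}, by first producing a Marcinkiewicz-type bound on $\nabla \ue$ and then integrating it.

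The starting point is the band truncation $\psi_n(s):=\mathrm{sgn}(s)\bigl((|s|-n)_+\wedge 1\bigr)$, whose gradient equals $\nabla \ue$ on $B_n$ and vanishes elsewhere, and which satisfies $|\psi_n(\ue)|=1$ on $E_n$. Lemma \ref{embedding} applied pointwise in $t$ gives $\int_0^T\|\psi_n(\ue)\|_{L^{p^*}(\Omega_t)}^p\,dt \le C\, b_n$, with $b_n := \int_{B_n}|\nabla \ue|^p\,dxdt$. Combined with $\sup_{t\in[0,T]}\|\psi_n(\ue(t))\|_{L^1(\Omega_t)}\le \beta/n$ (Chebyshev applied to \eqref{cond1}) and the parabolic interpolation $L^\infty_tL^1_x\cap L^p_tL^{p^*}_x\hookrightarrow L^m_{t,x}$ with $m:=p(d+1)/d$, this delivers the decay estimate
$$
|E_n|\le C\,\beta^{p/d}\,n^{-p/d}\,b_n\qquad (n\ge 1).
$$

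Next I would establish the linear truncation-energy bound $\sum_{n<k}b_n\le Ck$. Summing \eqref{cond2} over $n<k$ gives $\sum_{n<k}b_n \le kC_0 + C_1\sum_{n<k}\int_{E_n}|\nabla\ue|\,dxdt$, and the second term is controlled band by band using H\"older $\int_{E_n}|\nabla\ue|\le b_n^{1/p}|E_n|^{(p-1)/p}$, the previous decay estimate, and Young's inequality to absorb the resulting powers of $b_m$ into the left-hand side. Applying the same parabolic interpolation this time to $T_k(\ue)$ and invoking Chebyshev then yields $|\{|\ue|>k\}|\le Ck^{1-m}$. Plugging both into the standard decomposition
$$
|\{|\nabla \ue|>\lambda\}|\le \frac{\|\nabla T_k(\ue)\|_{L^p(Q_T)}^p}{\lambda^p}+|\{|\ue|>k\}|\le \frac{Ck}{\lambda^p}+\frac{C}{k^{m-1}},
$$
and optimising at $k\sim \lambda^{p/m}$, produces the Marcinkiewicz bound $|\{|\nabla\ue|>\lambda\}|\le C\lambda^{-(p-d/(d+1))}$, where the threshold $p-d/(d+1)=p(m-1)/m$ emerges exactly from the optimisation. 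A layer-cake integration then gives $\|\nabla\ue\|_{L^q(Q_T)}\le C$ uniformly in $\eps$ for every $q<p-d/(d+1)$, and the full norm \eqref{e2_1} follows by combining with \eqref{cond1} and the uniform Poincar\'e inequality that is a corollary of Lemma \ref{embedding}.

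The main difficulty is handling the self-referential nature of \eqref{cond2}, whose right-hand side contains a quantity essentially dominated by the gradient one is trying to bound. The decay estimate $|E_n|\le C\beta^{p/d}n^{-p/d}b_n$ is precisely the mechanism that makes a clean Young absorption possible, with the $n^{-p/d}$ gain producing the critical exponent $p-d/(d+1)$; any weaker decay would not close the argument. A minor technicality is that Lemma \ref{embedding} requires the Sobolev exponent to be $<d$; when $p\ge d$ one simply applies it with any $r<d$ (and correspondingly $r^\ast$ arbitrarily large), which only strengthens the resulting threshold.
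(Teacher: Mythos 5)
Your route is the classical Boccardo--Gallou\"et layer-cake strategy (first a weak-$L^{p-d/(d+1)}$ bound on $\nabla \ue$, then integration), which is genuinely different from the paper's argument (the paper works directly with the series $\sum_n\int_{B_n}|\nabla\ue|^q$, splits at a level $K$, interpolates, and lets $K\to\infty$ to absorb). Your decay estimate $|E_n|\le C\beta^{p/d}n^{-p/d}b_n$ via the parabolic interpolation $L^\infty_tL^1_x\cap L^p_tL^{p^*}_x\hookrightarrow L^m$ is correct and is not a feature of the paper's proof.

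However, the step ``establish $\sum_{n<k}b_n\le Ck$'' has a gap. The H\"older inequality you write, $\int_{E_n}|\nabla\ue|\le b_n^{1/p}|E_n|^{(p-1)/p}$, is false: $E_n=\{|\ue|>n+1\}$ and $B_n=\{n\le|\ue|\le n+1\}$ are disjoint, so H\"older on $E_n$ yields $\bigl(\int_{E_n}|\nabla\ue|^p\bigr)^{1/p}|E_n|^{(p-1)/p}$, and $\int_{E_n}|\nabla\ue|^p=\sum_{j>n}b_j$, not $b_n$. More fundamentally, even after this is corrected, $\sum_{n<k}\int_{E_n}|\nabla\ue|$ involves the bands $j\ge k$, which do not appear on the left-hand side $\sum_{n<k}b_n$; a band-by-band Young absorption therefore cannot close, and no uniform constant $C$ in $\sum_{n<k}b_n\le Ck$ can be extracted from \eqref{cond1}--\eqref{cond2} alone without first controlling a global gradient norm. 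The standard and salvageable way is to bound $\sum_{n<k}\int_{E_n}|\nabla\ue|\le k\|\nabla\ue\|_{L^1(Q_T)}$, carry the unknown $\|\nabla\ue\|_{L^1(Q_T)}$ through the Marcinkiewicz estimate and the layer-cake integration, and only absorb at the very end using $\|\nabla\ue\|_{L^1(Q_T)}\le |Q_T|^{1-1/q}\|\nabla\ue\|_{L^q(Q_T)}$ together with the fact that $q/p<q$ (which is, in disguise, exactly the absorption the paper performs by sending $K\to\infty$). As written, your proposal claims the uniform truncation-energy bound as a clean intermediate statement, and that claim does not follow from the conditions of the lemma.
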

\begin{remark}
	We remark that since $p>q$, obviously $\ue\in \LW{q}{1}{q}$ follows immediately from $\ue\in \LW{p}{1}{p}$ and $\|\ue\|_{\LW{q}{1}{q}} \leq C(T)\|\ue\|_{\LW{p}{1}{p}}$. However, the essential role of \eqref{e2_1} is that the constant $C(T)$ therein is independent of $\eps$, while the norm $ \|\ue\|_{\LW{p}{1}{p}}$ might blow up as $\eps\to 0$.
\end{remark}
\begin{proof}[Proof of Lemma \ref{lem:inter}]
	Let $1 \leq q < p$ be arbitrary. From \eqref{cond2}, by using H\"{o}lder's inequality, we have
	\begin{equation}\label{e3}
	\begin{aligned}
	\int_{B_n} |\nabla \ue |^pdxdt &\leq C_0+C_1\Big(\int_{E_n}|\nabla \ue|^qdxdt\Big)^{1/q}|E_n|^{(q-1)/q}\\
	&\leq C_0+C_1\|\nabla \ue\|_{L^q(Q_T)}|E_n|^{(q-1)/q}.
	\end{aligned}
	\end{equation}
	Since $q < p$ we can use H\"{o}lder's inequality, inequality \eqref{e3}, and the elementary inequality $(a+b)^{q/p} \leq a^{q/p} + b^{q/p}$ for $a,b\geq 0$, to obtain
	\begin{equation}\label{e5}
	\begin{aligned}
	\int_{B_n} |\nabla \ue|^q dxdt &\leq |B_n|^{(p-q)/p}\Big(\int_{B_n}|\nabla \ue|^pdxdt\Big)^{q/p}\\
	&\leq |B_n|^{(p-q)/p}\Big(C_0^{q/p}+C_1^{q/p}\|\nabla \ue\|^{q/p}_{L^q(Q_T)}|E_n|^{(q-1)/p}\Big).
	\end{aligned}
	\end{equation}
	Let $r \geq 0$ be chosen later. We have, by using the definitions of $B_n$ and $E_n$,
	\begin{equation}\label{e6}
	\begin{cases}
	&|B_n| \leq \frac{1}{n^r}\int_{B_n}|\ue|^r dxdt,\\
	&|E_n| \leq \frac{1}{n^r}\int_{E_n} |\ue|^r dxdt \leq \dfrac{1}{n^r}\|\ue\|^r_{L^r(Q_T)}.
	\end{cases}
	\end{equation}
	Inserting \eqref{e6} into \eqref{e5} yields
	\begin{equation}\label{e7}
	\begin{aligned}
	\int_{B_n} |\nabla \ue|^q dxdt &\leq C_0^{q/p}\Big(\dfrac{1}{n}\Big)^{r(p-q)/p}\Big(\int_{B_n}|\ue|^rdxdt\Big)^{(p-q)/p}\\
	&+C_1^{q/p}\|\nabla \ue\|^{q/p}_{L^q(Q_T)}\|\ue\|^{r(q-1)/p}_{L^r(Q_T)}\Big(\frac{1}{n}\Big)^{r(p-1)/p}\Big(\int_{B_n}|\ue|^rdxdt \Big)^{(p-q)/p}.
	\end{aligned}
	\end{equation}
	Let $K\in \mathbb N$ be chosen later. We split $\|\nabla \ue\|^q_{L^q(Q_T)}$ as follows
	\begin{equation}\label{e8}
	\|\na\ue\|_{L^q(Q_T)}^q = \int_{Q_T} |\nabla \ue|^q dxdt =\sum_{n = 0}^K \int_{B_n} |\nabla \ue|^q dxdt + \sum_{n = K+1}^\infty \int_{B_n} |\nabla \ue|^q dxdt.
	\end{equation}
	Since $|B_n| \leq |Q_T|$ and $|E_n| \leq |Q_T|$, we simply evaluate the first term in the right hand side of \eqref{e8} using \eqref{e5} as follows 
	\begin{equation}\label{e9}
	\sum_{n = 0}^K \int_{B_n}|\nabla \ue|^q dxdt \leq (K+1)C_2\Big(1+\|\nabla \ue\|^{q/p}_{L^q(Q_T)}\Big),
	\end{equation}
	where $C_2 = \max\{C_0^{q/p}|Q_T|^{(p-q)/p}, C_1^{q/p}|Q_T|^{(q-1)/p}\}$.
	Using Young's inequality in \eqref{e8}-\eqref{e9}, we get
	\begin{equation}\label{e10}
	\|\nabla \ue\|^q_{L^q(Q_T)}\leq C(K)+2\sum_{n= K+1}^\infty \int_{B_n}|\nabla \ue|^q dxdt,
	\end{equation}
	where
	\begin{equation*}
		C(K) = 2\frac{p-1}{p}\cdot((K+1)C_2)^{\frac{p}{p-1}}\left(\frac 2p\right)^{\frac{1}{p-1}} + 2(K+1)C_2.
	\end{equation*}
	Note that the constant $C(K)$ tends to infinity as $K\to\infty$. It remains to proceed to the study of the series which appears on the right hand side of \eqref{e10}. 
	Applying H\"{o}lder's inequality on the series with exponents $p/(p-q)$ and $p/q$ and using \eqref{e7}, we have
	\begin{equation}\label{e11}
	\begin{aligned}
	&\sum_{n = K+1}^\infty \int_{B_n} |\nabla \ue|^q dxdt\\
	&\leq C_0^{q/p}\Big(\sum_{n=K+1}^\infty \dfrac{1}{n^{r(p-q)/q}}\Big)^{q/p}\Big(\sum_{n=K+1}^\infty \int_{B_n} |\ue|^r dxdt\Big)^{(p-q)/p}\\
	&\quad +C_1^{q/p}\|\nabla \ue\|_{L^q(Q_T)}^{q/p}\|\ue\|_{L^r(Q_T)}^{r(q-1)/p}\Big(\sum_{n=K+1}^\infty\dfrac{1}{n^{r(p-1)/q}}\Big)^{q/p}\Big(\sum_{K+1}^\infty \int_{B_n}|\ue|^r dxdt \Big)^{(p-q)/p}\\
	&\leq C_0^{q/p}\Big(\sum_{n=K+1}^\infty \dfrac{1}{n^{r(p-q)/q}}\Big)^{q/p}\|\ue\|_{L^r(Q_T)}^{r(p-q)/p}\\
	&\quad +C_1^{q/p}\|\nabla \ue\|^{q/p}_{L^q(Q_T)}\|\ue\|^{r(p-1)/p}_{L^r(Q_T)}\Big(\sum_{n=K+1}^\infty \dfrac{1}{n^{r(p-1)/q}}\Big)^{q/p}.
	\end{aligned}
	\end{equation}
	We choose $r$ so that the remainder of the series above converges to zero as $K \to \infty$, i.e.
	\begin{equation}\label{cond_r}
		\frac{r(p-q)}{q} > 1.
	\end{equation}
	Note that due to $q \geq 1$, this already implies $r(p-1)/q > 1$. 
%	We will use the convention $\delta(K)$ for a quantity, whose value could change from line to line or even in the same line, which converges to zero as $K\to \infty$. 
	It follows from \eqref{e11} that
	\begin{equation}\label{e12}
		\|\na \ue\|_{L^q(Q_T)}^q \leq C(K) + \delta(K)\left(\|\ue\|_{L^r(Q_T)}^{r(p-q)/p} + \|\na \ue\|_{L^q(Q_T)}^{q/p}\|\ue\|_{L^r(Q_T)}^{r(p-1)/p} \right)
	\end{equation}
	with
	\begin{equation*}
		\delta(K) = 2\max\left\{C_0^{q/p}\Big(\sum_{n=K+1}^\infty \dfrac{1}{n^{r(p-q)/q}}\Big)^{q/p}; C_1^{q/p}\Big(\sum_{n=K+1}^\infty \dfrac{1}{n^{r(p-1)/q}}\Big)^{q/p} \right\}
	\end{equation*}
	with the property $\lim_{K\to\infty}\delta(K) = 0$ thanks to \eqref{cond_r}. From Young's inequality, and recalling that $q/p <q$, we have
	\begin{equation*}
		\|\na \ue\|_{L^q(Q_T)}^{q/p}\|\ue\|_{L^r(Q_T)}^{r(p-1)/p} \leq \frac 1p\|\na \ue\|_{L^q(Q_T)}^q + \frac{p-1}{p}\|\ue\|_{L^r(Q_T)}^{r}.
	\end{equation*}
	Therefore, \eqref{e12} implies
	\begin{equation}\label{e13}
		\begin{aligned}
		&\|\na \ue\|_{L^q(Q_T)}^q\\
		&\leq C(K) + \delta(K)\left[\|\ue\|_{L^r(Q_T)}^{r(p-q)/p} + \frac{p-1}{p}\|\ue\|_{L^r(Q_T)}^r + \frac 1p\|\na\ue\|_{L^q(Q_T)}^q\right]\\
		&\leq C(K) + \delta(K)\left[\frac qp + \frac{2p-q-1}{p}\|\ue\|_{L^r(Q_T)}^r + \frac 1p\|\na\ue\|_{L^q(Q_T)}^q\right]
		\end{aligned}
	\end{equation}
	where we used $\frac{r(p-q)}{p} = r - \frac{rq}{p} < r$ and the Young inequality $y^{r(p-q)/q} \leq \frac{p-q}{p}y^r + \frac qp$ at the last step. We will show now that by choosing a suitable $r$ (which satisfies \eqref{cond_r}) we can estimate
	\begin{equation*}\label{e13_1}
		\|\ue\|_{L^r(Q_T)}^r \leq C(T,\beta)\|\na\ue\|_{L^q(Q_T)}^q
	\end{equation*}
	with $\beta$ is in \eqref{cond1}. Indeed, by setting
	\begin{equation}\label{chose_r}
		r = \frac{q(d+1)}{d},
	\end{equation}
	we have
	\begin{equation*}
		\frac{r(p-q)}{q} = \frac{(d+1)(p-q)}{d} > 1 \quad \text{ since } \quad q < p - \frac{d}{d+1},
	\end{equation*}
	thus \eqref{cond_r} is satisfied. Note that from \eqref{chose_r} we also have $r < q^* = \frac{dq}{d-q}$. Therefore, we can use the interpolation inequality with $\frac{1}{r} = \frac{\eta}{1} + \frac{1-\eta}{q^*}$, and $\sup_{t\in(0,T)}\|\ue\|_{L^1(\Omega_t)}\leq \beta$ to estimate
	\begin{equation}\label{e16}
		\begin{aligned}
		\|\ue\|_{L^r(Q_T)}^r &= \int_0^T\|\ue\|_{L^r(\Omega_t)}^rdt \leq \int_0^T\|\ue\|_{L^1(\Omega_t)}^{r\eta}\|\ue\|_{L^{q^*}(\Omega_t)}^{r(1-\eta)}dt\\
		&\leq \beta^{r\eta}\int_0^T\|\ue\|_{L^{q^*}(\Omega_t)}^{r(1-\eta)}dt.
		\end{aligned}
	\end{equation}
	From \eqref{chose_r}, we can easily check that $r(1-\eta)=q$. Therefore, \eqref{e16} yields
	\begin{equation}\label{e16_1}
		\|\ue\|_{L^r(Q_T)}^r\leq \beta^{r\eta}\|\ue\|_{\LQ{q}{q^*}}^q.
	\end{equation}
	By using Lemma \ref{embedding},
	\begin{equation}\label{e16_2}
		\|\ue\|_{\LQ{q}{q^*}}^q  \leq C_{\Omega,T}^q\int_0^T\|\na\ue\|_{L^{q}(\Omega_t)}^qdt = C_{\Omega,T}^q\|\na\ue\|_{L^q(Q_T)}^q.
	\end{equation}
	Combining \eqref{e13}, \eqref{e16_1} and \eqref{e16_2} leads to
	\begin{equation}\label{e16_3}
		\|\na\ue\|_{L^q(Q_T)}^q \leq C(K)+\delta(K)\left[\frac qp + \left(\frac{2p-q-1}{p}\beta^{r\eta}C_{\Omega,T}^q + \frac 1p \right)\|\na\ue\|_{L^q(Q_T)}^q\right].
	\end{equation}
	Recalling that $\lim_{K\to\infty} \delta(K) = 0$. We choose $K$ large enough to have
	\begin{equation*}
		\delta(K)\left(\frac{2p-q-1}{p}\beta^{r\eta}C_{\Omega,T}^q + \frac 1p \right) \leq \frac 12,
	\end{equation*}
	which, in combination with \eqref{e16_3}, implies
	\begin{equation*}
		\|\na\ue\|_{L^q(Q_T)}^q \leq 2\left(C(K) + \delta(K)\frac qp\right),
	\end{equation*}
	which is the desired estimate \eqref{e2_1}.
\end{proof}
In order to prove Lemma \ref{UniformBounds}, thanks to Lemma \ref{lem:inter}, it is sufficient to prove \eqref{cond1} and \eqref{cond2} for solutions to the approximate problem \eqref{approx}. These will be shown in the next consecutive lemmas.
\begin{lemma}\label{prove_cond1}
	There exists a constant $\beta = \beta\left(T,\|u_0\|_{L^1(\Omega_0)}, \|f\|_{L^1(Q_T)}\right)$ {\normalfont independent of $\eps$} such that for any solution to \eqref{approx}, the following holds
	\begin{equation*}
		\|\ue\|_{\LQ{\infty}{1}} \leq \beta.
	\end{equation*}
\end{lemma}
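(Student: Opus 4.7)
The plan is a Stampacchia-type $L^1$-estimate: test the approximate equation against a smooth regularization of $\operatorname{sgn}(\ue)$, carefully handling the moving domain via the Reynolds transport identity. I fix a convex $C^2$ even function $S_\delta:\R\to\R$ with $S_\delta(0)=0=S_\delta'(0)$, $S_\delta''\ge 0$, $|S_\delta'|\le 1$, and $|S_\delta(r)-|r||\le \delta$; the concrete choice $S_\delta(r)=\sqrt{r^2+\delta^2}-\delta$ works. Since $\ue=0$ on $\Sigma_T$ and $S_\delta'(0)=0$, the map $S_\delta'(\ue)$ lies in $\LW{p}{1}{p}\cap L^\infty(Q_T)$ and is an admissible test function for the weak formulation of \eqref{approx}.

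Plugging in $S_\delta'(\ue)$ and invoking Reynolds' formula (whose boundary contribution vanishes thanks to $S_\delta(0)=0$ and $\ue|_{\Sigma_T}=0$), the time term becomes
\begin{equation*}
\int_0^t\!\!\int_{\O_s} S_\delta'(\ue)\,\pa_s \ue\,dxds = \int_{\O_t} S_\delta(\ue(t))\,dx - \int_{\O_0} S_\delta(u_{0,\eps})\,dx.
\end{equation*}
An integration by parts turns the diffusion term into $\int_0^t\!\!\int_{\O_s} S_\delta''(\ue)\,a(x,s,\na \ue)\cdot\na \ue\,dxds\ge 0$ by \eqref{A3} and $S_\delta''\ge 0$. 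Introducing $H_\delta(r):=S_\delta'(r)r-S_\delta(r)$, one computes $H_\delta'(r)=S_\delta''(r)r$, $H_\delta(0)=0$, and $|H_\delta|\le \delta$; integrating by parts twice, the transport contribution rearranges to $\int_0^t\!\!\int_{\O_s} H_\delta(\ue)\,\di(\vv)\,dxds$, whose absolute value is bounded by $\delta T|\widehat\O|\,\|\di\vv\|_\infty$. The sign hypothesis \eqref{G1}, together with $\operatorname{sgn}(S_\delta'(r))=\operatorname{sgn}(r)$, yields $\int S_\delta'(\ue)\,g_\eps(\ue,\na \ue)\,dxds\ge 0$, which can be discarded; finally the source contribution is controlled by $\int|f_\eps|\,dxds\le\|f\|_{L^1(Q_T)}$ since $|S_\delta'|\le 1$.

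Assembling these estimates for arbitrary $t\in[0,T]$, sending $\delta\to 0$, and using \eqref{increasing} gives
\begin{equation*}
\int_{\O_t}|\ue(t)|\,dx \le \|u_0\|_{L^1(\O_0)}+\|f\|_{L^1(Q_T)} =: \beta,
\end{equation*}
uniformly in $t\in[0,T]$ and in $\eps$, which is the claim. The only delicate technical point is rigorously justifying the chain-rule identity $\int_{\O_s}S_\delta'(\ue)\,\pa_s \ue\,dx=\frac{d}{ds}\int_{\O_s}S_\delta(\ue)\,dx$ on the non-cylindrical $Q_T$ given only $\pa_t\ue\in L^{p'}(0,T;W^{-1,p'}(\O_t))$; I would handle this by a standard time-mollification of $\ue$ combined with Reynolds' formula, using that $S_\delta$ is Lipschitz with bounded derivative. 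Once this identity is secured, every other bound follows directly from the structural assumptions \eqref{A3} and \eqref{G1}.
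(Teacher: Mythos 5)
Your proof is correct, but it is a genuinely different route than the paper's. The paper tests the approximate equation against the truncation $T_k(\ue)$, obtains the bound \eqref{e23} on $\sup_t\|S_k(\ue)(t)\|_{L^1(\Omega_t)}$, and then specializes to $k=1$; you instead test against a smooth Kru\v{z}kov-type approximation $S_\delta'(\ue)$ of $\operatorname{sgn}(\ue)$. The two proofs diverge most visibly in how they treat the advection term. The paper writes $\int \div(\ue\vv)T_k(\ue) = -\int \chi_{\{|\ue|\le k\}}\ue\vv\cdot\nabla\ue$ and absorbs it into the coercive $\alpha\int\chi_{\{|\ue|\le k\}}|\nabla\ue|^p$ via Young's inequality, which is why their $\beta$ picks up an explicit dependence on $\alpha$, $\|\vv\|_\infty$, $T$, and $\sup_t|\Omega_t|$. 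You instead observe that the advection contribution rearranges exactly to $\int H_\delta(\ue)\div\vv$ with $H_\delta(r)=rS_\delta'(r)-S_\delta(r)$, $|H_\delta|\le\delta$, so it disappears in the $\delta\to 0$ limit; this yields the sharper bound $\beta=\|u_0\|_{L^1(\Omega_0)}+\|f\|_{L^1(Q_T)}$, independent of the diffusion, the advection field, and the geometry. What the paper's choice buys in return is uniformity of machinery: the same $T_k$-test used here also produces the term $\alpha\int\chi_{\{|\ue|\le k\}}|\nabla\ue|^p$ and the $g_\eps T_k$ term that are reused essentially verbatim in Lemmas \ref{prove_cond2} and \ref{lem:g}, whereas your $S_\delta'$-test has $S_\delta''$ unbounded as $\delta\to 0$ and gives no useful gradient information. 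One remark on the technical point you flag: the chain-rule identity you need for $S_\delta$ is precisely what the paper establishes in \eqref{t0} for $S_k$, via time-mollification and Reynolds transport; the same argument works for your $S_\delta$ since it is likewise convex, $C^1$ with bounded derivative, and vanishes at the origin, but you should note that the admissibility of $S_\delta'(\ue)$ as a test function relies on $S_\delta''\le 1/\delta$ being bounded for fixed $\delta$ so that $\nabla S_\delta'(\ue)=S_\delta''(\ue)\nabla\ue$ stays in $L^p$.
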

\begin{proof}
	Let $k \in \mathbb{R}^+$. We define the truncated function
	\begin{equation*}\label{T_k}
	T_k(z) = \begin{cases}
	z, &\text{ if } |z| \leq k,\\
	k, &\text{ if } z > k,\\
	-k, &\text{ if } z < -k.
	\end{cases}
	\end{equation*}
	It is clear that $T_k$ is a Lipschitz function, and if $\ue  \in \LW{p}{1}{p}$ then $T_k(\ue ) \in \LW{p}{1}{p}$ with 
	\begin{equation}\label{na-T_k}
	\nabla T_k(\ue ) = \chi_{\{|\ue | \leq k\}}\nabla \ue ,
	\end{equation}
	where $\chi_{\{|\ue | \leq k\}}$ is the characteristic function of the set $\{|\ue (x, t)|\leq k\}$. Define $S_k(z) = \int_0^zT_k(\tau)d\tau$. We will show the following weak chain rule 
	\begin{equation}\label{t0}
		\int_{Q_t}\pa_s\ue T_k(\ue)dxds = \int_{\Omega_s}S_k(\ue)dx\bigg|_{s=0}^{s=t}.
	\end{equation}
	Indeed, choose a smooth sequence $\{v^m\}\subset C^1([0,T];C^1_c(\Omega_t))$ such that $v^m \xrightarrow{m\to\infty} \ue$ in $L^p(0,T;W^{1,p}_0(\Omega_t))\cap C([0,T];L^p(\Omega_t))$ and $\pa_t v^m \xrightarrow{m\to\infty} \pa_t \ue$ in $L^{p'}(0,T;W^{-1,p'}(\Omega_t))$. Since $S_k'(z) = T_k(z)$ and $v^m$ is smooth, it holds
	\begin{align*}
		\int_{Q_t}\pa_sv^m T_k(v^m)dxds = \int_{Q_t}\pa_s(S_k(v^m))dxds &= \int_0^t\left[\frac{d}{ds}\int_{\Omega_s}S_k(v^m)dx - \int_{\pa\Omega_s}S_k(v^m)(\vv\cdot \nu)dS\right]ds\\ &=\int_{\Omega_s}S_k(v^m)dx\bigg|_{s=0}^{s=t}
	\end{align*}
	where we used $v^m|_{\pa\Omega_s} = 0$ an $S_k(0) = 0$ at the last step. Let $m\to \infty$, thanks to $v^m \to \ue$ in $C([0,T];L^p(\Omega_t))$ and $|S_k(z)| \leq C(1+|z|)$, it follows that \begin{equation}\label{t1}\int_{\Omega_s}S_k(v^m)dx\bigg|_{s=0}^{s=t} \to \int_{\Omega_s}S_k(\ue)dx\bigg|_{s=0}^{s=t}.
	\end{equation}
	For the left hand side, we estimate
	\begin{equation}\label{t2}
	\begin{aligned}
		&\left|\int_{Q_t}(\pa_sv^mT_k(v^m) - \pa_s\ue T_k(\ue))dxds \right|\\
		&\leq \int_{Q_t}\left|\pa_sv^m - \pa_s\ue \right||T_k(v^m)|dxds+ \left|\int_{Q_t}(T_k(v^m)-T_k(\ue))\pa_s\ue dxds \right|\\
		&=: (I) + (II).
	\end{aligned}
	\end{equation}
	Since
	\begin{equation*}
		(I)\leq \|\pa_sv^m - \pa_s\ue\|_{L^{p'}(0,T;W^{-1,p'}(\Omega_t))}\|T_k(v^m)\|_{L^p(0,T;W^{1,p}_0(\Omega_t))}
	\end{equation*}
	and $\{T_k(v^m)\}_{m\geq 1}$ is bounded in $L^p(0,T;W^{1,p}_0(\Omega_t))$, 
	we have $\lim_{m\to\infty}(I) = 0$. For $(II)$ it follows from $v^m\to \ue$ a.e. in $Q_T$ and $T_k$ is continuous that $T_k(v^m)\to T_k(\ue)$ a.e. in $Q_T$. 
	By combining this with $\{T_k(v^m)\}$ is bounded in $L^p(Q_T)$, we obtain $T_k(v^m) \rightharpoonup T_k(\ue)$ weakly in $L^p(Q_T)$ (see e.g. \cite[Lemma 8.3]{robinson2001infinite}). Now since $T_k(\ue)\in L^p(0,T;W^{1,p}_0(\Omega_t))$, it yields $T_k(v^m) \rightharpoonup T_k(\ue)$ weakly in $L^p(0,T;W^{1,p}_0(\Omega_t)$ and therefore $\lim_{m\to\infty}(II) = 0$. By combining \eqref{t1} and \eqref{t2}, we obtain the desired relation \eqref{t0}.
	
	Choosing $\phi=T_k(\ue )$ as test function for \eqref{approx} and using \eqref{t0}, we get, for $0<t\leq T$,
	\begin{equation}\label{test-T_k}
		\begin{aligned}
			\int_{\Omega_s}S_k(\ue )dx\biggr|_{s=0}^{s=t} & + \int_{Q_t}a(x,s, \nabla \ue )\nabla T_k(\ue )dxds \\
			&+\int_{Q_t}\div(\ue\vv)T_k(\ue )dxds +\int_{Q_t} \Ge(\ue , \nabla \ue )T_k(\ue )dxds\\
			&= \int_{Q_t}\fe T_k(\ue )dxds.
		\end{aligned}
	\end{equation}
	Note that the boundary terms vanish due to the homogeneous Dirichlet boundary condition, which consequently implies that $S(\ue) = 0$ on the boundary.
	
	From \eqref{A3} and \eqref{na-T_k}, we have
	\begin{equation}\label{a}
	\begin{aligned}
	\int_{Q_t}a(x,s, \nabla \ue )\nabla T_k(\ue )dxds &= \int_{Q_t}\chi_{\{|\ue | \leq k\}}a(x,s, \nabla \ue ) \nabla \ue  dxds\\
	&\geq \alpha \int_{Q_t}\chi_{\{|\ue | \leq k\} } |\nabla \ue |^p dxds.
	\end{aligned}
	\end{equation}
	Applying intergration by parts for penultimate term on the left hand side of \eqref{test-T_k}, we obtain
	\begin{equation}\label{e17}
	\begin{aligned}
	\int_{Q_t}\div (\ue  \vv)T_k(\ue )dxds = - \int_{Q_t}\ue  \vv \nabla T_k(\ue )dxds= -\int_{Q_t}\chi_{\{|\ue | \leq k\}}\ue  \vv \nabla \ue  dxds.
	\end{aligned}
	\end{equation}
	Combining \eqref{a}-\eqref{e17} with \eqref{test-T_k}, we get
	\begin{equation}\label{e18}
	\begin{aligned}
	\int_{\Omega_s}S_k(\ue )dx\biggr|_{s=0}^{s=t} &+\alpha\int_{Q_t}\chi_{\{|\ue | \leq k\}}|\nabla \ue |^p dxds +\int_{Q_t}\Ge(\ue,\na\ue)T_k(\ue)dxds\\
	&\leq \int_{Q_t}|\fe T_k(\ue )|dxds + \int_{Q_t}\chi_{\{|\ue | \leq k\}}\ue  \vv \nabla \ue dxds.
	\end{aligned}
	\end{equation}
	Applying Young's inequality for last term in right-hand side above, we have
	\begin{equation}\label{e19}
	\begin{aligned}
	\int_{Q_t}\chi_{\{|\ue | \leq k\}} \ue  \vv \nabla \ue  dxds &\leq \|\vv\|_{\infty}\int_{Q_t}|\chi_{\{|\ue | \leq k\}} \ue||\na\ue|dxds\\ &\leq \frac{\alpha}{2}\int_{Q_t}\chi_{\{|\ue | \leq k\}}|\nabla \ue |^p dxds + C(\alpha,\|\vv\|_{\infty})\int_{Q_t}\chi_{\{|\ue | \leq k\}} |\ue |^{p'}dxds\\
	&\leq \frac{\alpha}{2} \int_{Q_t}\chi_{\{|\ue | \leq k\}}|\nabla \ue |^p dxdt +C(\alpha,\|
	\vv\|_{\infty})|k|^{p'}|Q_T|\\
	&=\frac{\alpha}{2} \int_{Q_t}\chi_{\{|\ue | \leq k\}}|\nabla \ue |^p dxdt +C(T, \alpha, \|\vv\|_{\infty},k),
	\end{aligned}
	\end{equation}
	where $\frac{1}{p} + \frac{1}{p'} = 1$. Since $|T_k(\ue)| \leq k$, 
	\begin{equation}\label{e20}
		\int_{Q_t}|\fe T_k(\ue)|dxdt \leq k\|\fe\|_{L^1(Q_T)}.
	\end{equation}
We remark that $u_\varepsilon T_k(\ue ) \geq 0$, combining with \eqref{G1}, we have $\Ge(\ue,\na\ue)T_k(\ue) \geq 0$. Therefore, inserting \eqref{e19} and \eqref{e20} into \eqref{e18} gives
	\begin{equation}\label{e23}
		\sup_{t\in(0,T)}\|S_k(\ue)(t)\|_{L^1(\Omega_t)} \leq \|S_k(u_{\eps,0})\|_{L^1(\Omega_0)} + k\|\fe\|_{L^1(Q_T)} + C(T,\alpha,\|\vv\|_\infty,k).
	\end{equation}	
	We set $k= 1$ in \eqref{e23}. Note that $0\leq S_1(z) \leq |z|$ and recall \eqref{increasing}, we get
	\begin{equation*}\label{e25}
	\begin{aligned}
	\sup_{t\in(0,T)}\int_{\Omega_t}S_1(\ue )(t)dx\leq \|u_{0}\|_{L^1(\Omega_0)} + k\|f\|_{L^1(Q_T)} + C(T,\alpha,\|\vv\|_\infty,1).
	\end{aligned}
	\end{equation*}
	Therefore, by using $\ue = S_1(\ue)$ for $|\ue| \geq 1$,
	\begin{equation*}\label{e26}
	\begin{aligned}
	\sup_{t\in(0,T)}\|\ue\|_{L^1(\Omega_t)} &=\sup_{t\in(0,T)}\int_{\{x\in \Omega_t: \;|\ue |\leq 1\}}|\ue |dx + \sup_{t\in(0,T)}\int_{\{x\in \Omega_t: \;|\ue |\geq 1\}}|\ue |dx\\
	&\leq \sup_{t\in(0,T)}|\Omega_t| + \sup_{t\in(0,T)}\int_{\Omega_t}|S_1(\ue )|dx\\
	&\leq \sup_{t\in(0,T)}|\Omega_t|+\|u_{0}\|_{L^1(\Omega_0)} + k\|f\|_{L^1(Q_T)} + TC(T,\alpha,\|\vv\|_\infty,1)\\
	&=:\beta.
	\end{aligned}
	\end{equation*}
	This completes the proof of Lemma \ref{prove_cond1}.
\end{proof}

\begin{lemma}\label{prove_cond2}
	There exist positive constants $C_0, C_1$ {\normalfont independent of $\eps$ and $n\in\mathbb N$} such that the following estimate holds
	\begin{equation*}
		\int_{B_n}|\na\ue|^pdxdt \leq C_0 + C_1\int_{E_n}|\na\ue|dxdt \quad \text{ for all } \quad \eps>0 \text{ and all } n\in \mathbb N,
	\end{equation*}
	where $\ue$ is a solution to \eqref{approx}. 
\end{lemma}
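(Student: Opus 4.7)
The plan is to test the approximate equation \eqref{approx} with the Boccardo--Gallou\"et type truncation $\phi_n(\ue) := T_1(\ue - T_n(\ue))$, which is Lipschitz with $\phi_n(0)=0$, hence $\phi_n(\ue) \in \LW{p}{1}{p}$ is an admissible test function. A direct computation gives $\phi_n'(\ue) = \chi_{B_n}$, so $\na\phi_n(\ue) = \chi_{B_n}\na\ue$, while $\phi_n(z)$ has the same sign as $z$. Setting $\Phi_n(z):=\int_0^z\phi_n(s)\,ds$, one checks $0 \le \Phi_n(z)\le|z|$.

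The four resulting terms in the weak formulation would be handled as follows. For the time derivative, the moving-domain chain rule established analogously to \eqref{t0} yields
\[
\int_{Q_T}\pa_t\ue\,\phi_n(\ue)\dx = \int_{\O_T}\Phi_n(\ue(T))dx - \int_{\O_0}\Phi_n(u_{0,\eps})dx \ge -\|u_0\|_{L^1(\O_0)},
\]
using $\Phi_n\ge 0$, $\Phi_n(z)\le|z|$ and \eqref{increasing}. For the diffusion term, \eqref{A3} together with $\na\phi_n(\ue) = \chi_{B_n}\na\ue$ immediately produces the lower bound $\alpha\int_{B_n}|\na\ue|^p\dx$. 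The $g$-contribution is nonnegative because $g_\eps = g/(1+\eps|g|)$ preserves the sign condition \eqref{G1} and $\phi_n(z)$ shares the sign of $z$, so $g_\eps(\ue,\na\ue)\phi_n(\ue) \ge 0$. Finally, the right-hand side is controlled by $\|f\|_{L^1(Q_T)}$ since $|\phi_n|\le 1$ and \eqref{increasing}.

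The main obstacle is the convection term $-\int_{Q_T}\ue\vv\cdot\na\phi_n(\ue)\dx = -\int_{B_n}\ue\vv\cdot\na\ue\dx$: a naive estimate $|\ue|\le n+1$ on $B_n$ followed by Young's inequality would produce an uncontrollable factor $(n+1)^{p'}|B_n|$. To bypass this I would integrate by parts in space a second time. Defining $\Psi_n(z):=\int_0^z s\phi_n'(s)\,ds$, one has $\ue\,\na\phi_n(\ue) = \na\Psi_n(\ue)$, and since $\Psi_n(0)=0$ and $\ue|_{\pa\O_t}=0$ the boundary terms vanish, giving
\[
-\int_{Q_T}\ue\vv\cdot\na\phi_n(\ue)\dx = \int_{Q_T}\div(\vv)\,\Psi_n(\ue)\dx.
\]
An explicit computation shows $0\le \Psi_n(z)\le n+\tfrac12$ with $\Psi_n$ supported in $\{|z|\ge n\}$, so by Chebyshev's inequality and Lemma~\ref{prove_cond1},
\[
\left|\int_{Q_T}\div(\vv)\,\Psi_n(\ue)\dx\right| \le \|\div\vv\|_\infty\Big(n+\tfrac12\Big)\,\mathrm{meas}\{|\ue|\ge n\} \le C
\]
uniformly in $\eps$ and $n$.

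Collecting the four estimates yields $\alpha\int_{B_n}|\na\ue|^p\dx \le C_0$ for a constant $C_0$ depending on $T,\alpha,\vv,\|u_0\|_{L^1(\O_0)},\|f\|_{L^1(Q_T)}$ and $\beta$ but independent of $\eps$ and $n$. This establishes the lemma with $C_1 = 0$, which is stronger than, and thus implies, the stated estimate.
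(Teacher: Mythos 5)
Your proof is correct and handles the convection term by a different, slicker route that yields a strictly stronger conclusion. You and the paper test with the same Boccardo--Gallou\"et truncation $\phi_n(\ue) = T_1(\ue - T_n(\ue))$ (the paper's display \eqref{phi_n} has a sign typo making $\phi_n$ even rather than odd, but all its subsequent manipulations --- $\na\phi_n(\ue)=\chi_{B_n}\na\ue$ and $\Ge(\ue,\na\ue)\phi_n(\ue)\ge 0$ --- require the odd choice, which is the one you use). The paper treats the convection term by expanding $\div(\ue\vv)\phi_n(\ue) = (\na\ue\cdot\vv + \ue\div\vv)\phi_n(\ue)$ and estimating the gradient piece head-on: it splits the support of $\phi_n(\ue)$ into $B_n\cup E_n$, absorbs $\int_{B_n}|\na\ue|\,dxdt$ into the coercivity term via Young's inequality, and is left with the residual contribution $C_1\int_{E_n}|\na\ue|\,dxdt$. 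You instead observe that $\ue\,\na\phi_n(\ue) = \na\Psi_n(\ue)$ for $\Psi_n(z)=\int_0^z s\phi_n'(s)\,ds$, a Lipschitz function with $\Psi_n(0)=0$, so a second integration by parts moves the derivative onto $\vv$ and turns the entire convection contribution into $\int_{Q_T}\div\vv\,\Psi_n(\ue)\,dxdt$. Since $0\le\Psi_n\le n+\tfrac12$ and $\operatorname{supp}\Psi_n\subset\{|z|\ge n\}$, Chebyshev combined with the $L^\infty(0,T;L^1(\O_t))$ bound of Lemma~\ref{prove_cond1} gives a bound uniform in $n$ (for $n=0$ one uses the trivial bound $\tfrac12\|\div\vv\|_\infty|Q_T|$ directly). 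This eliminates the $E_n$-term entirely, so $C_1=0$ suffices, which is strictly stronger than the paper's estimate and would also streamline the bookkeeping in Lemma~\ref{lem:inter} and Lemma~\ref{lemg_varepsilon}. Two small points to tighten: your $\Psi_n$ clashes with the paper's notation (the paper writes $\Psi_n$ for $\int_0^z\phi_n$, which is your $\Phi_n$), so be careful when cross-referencing; and you should spell out that $\Psi_n$ is Lipschitz (with constant at most $n+1$) and vanishes at the origin, since that is exactly what guarantees $\Psi_n(\ue)\in L^p(0,T;W^{1,p}_0(\O_t))$ and the vanishing of the boundary term in the second integration by parts.
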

\begin{proof}
For $n\in \mathbb N$, we define the function $\phi_n: \mathbb R\to \mathbb R$ as
\begin{equation}\label{phi_n}
\phi_n(z) = \begin{cases}
1, &\text{ if } z> n+1,\\
z-n, &\text{ if } n \leq z \leq n+1,\\
0, &\text{ if } -n < z < n,\\
-z-n, &\text{ if } -n-1 \leq z \leq -n,\\
1, &\text{ if } z \leq -n-1,
\end{cases}
\end{equation}
and we set $\Psi_n(z) = \int_0^z\phi_n(\tau)d\tau$. We note that $\phi_n$ is a Lipschitz function, and therefore $\ue \in \LW{p}{1}{p}$ implies $\phi_n(\ue ) \in \LW{p}{1}{p}$ with
\begin{equation*}\label{nabla-phi}
\nabla \phi_n(\ue ) = \chi_{B_n}\nabla \ue ,
\end{equation*}
$\chi_{B_n}$ denoting the characteristic function of the set $B_n = \{(x,t) \in Q_T: n \leq |\ue (x,t)| \leq n+1\}$ defined in \eqref{def_BnEn}.
We now take $\phi_n(\ue ) \in \LW{p}{1}{p}$ as test function for \eqref{approx} to get, by using a weak chain rule similar to \eqref{t0},
\begin{equation}\label{test-phi_n_1}
\begin{aligned}
\int_{\Omega_T}&\Psi_n(\ue)(T)dx  +\int_{Q_T}a(x,t, \nabla \ue ) \nabla \phi_n(\ue )dxdt+\int_{Q_T}\div(\ue  \vv)\phi_n(\ue )dxdt\\
&+\int_{Q_T} \Ge(\ue , \nabla \ue )\phi_n(\ue )dxdt  \leq \int_{\Omega_0}\Psi_n(u_{0,\eps})dx +  \int_{Q_T}|\fe\phi_n(\ue )|dxdt.
\end{aligned}
\end{equation}
From \eqref{A3}, we obtain
\begin{equation}\label{e27}
\begin{aligned}
\int_{Q_T}a(x,t, \nabla \ue )\nabla \phi_n(\ue )dxdt &= \int_{Q_T}\chi_{B_n}a(x,t, \nabla \ue ) \nabla \ue  dxdt\\
&\geq \alpha \int_{Q_T}\chi_{B_n} |\nabla \ue |^p dxdt\\
&= \alpha\int_{B_n}|\na\ue|^pdxdt.
\end{aligned}
\end{equation}
The penultimate term on the left hand side of \eqref{test-phi_n_1} can be rewritten as
\begin{equation}\label{e28}
\begin{aligned}
\int_{Q_T}\div (\ue  \vv)\phi_n(\ue )dxdt =  \int_{Q_T}(\na\ue\cdot\vv + \ue\div\vv) \phi_n(\ue )dxdt.
\end{aligned}
\end{equation}
Combining \eqref{e27}-\eqref{e28} with \eqref{test-phi_n_1}, and the fact that $\Psi_n$ is nonnegative, we get
\begin{equation}\label{e29}
\begin{aligned}
&\alpha\int_{B_n}|\nabla \ue |^p dxdt + \int_{Q_T}\Ge(\ue,\na\ue)\phi_n(\ue)dxdt \\
&\leq \int_{\Omega_0}\Psi_n(u_{0,\eps})dx+ \int_{Q_T}|\fe||\phi_n(\ue )|dxdt\\
&\quad + \int_{Q_T}|\na\ue \cdot \vv||\phi_n(\ue)| dxdt + \int_{Q_T}|\div\vv||\ue||\phi_n(\ue)|dxdt\\
&\leq \int_{\Omega_0}\Psi_n(u_{0,\eps})dx + \|f\|_{L^1(Q_T)} + \|\vv\|_{\infty}\int_{Q_T}|\na\ue||\phi_n(\ue)|dxdt + \|\div\vv\|_{\infty}T\beta
\end{aligned}
\end{equation}
where we used $\sup_{t\in(0,T)}\|\ue\|_{L^1(\Omega_t)}\leq \beta$ at the last step.  Using $|\Psi_n(z)| \leq |z|$ and $\textrm{supp}(\phi_n) \subset (-\infty, -n] \cup [n,\infty)$, we can estimate
\begin{equation*}
	\left|\int_{\Omega_0}\Psi_n(u_{0,\eps})dx \right| \leq \|u_{0,\eps}\|_{L^1(\Omega_0)} \leq \|u_0\|_{L^1(\Omega_0)}
\end{equation*}
and
\begin{align*}
	\int_{Q_T}|\na\ue||\phi_n(\ue)|dxdt &\leq \int_{B_n}|\na\ue|dxdt + \int_{E_n}|\na\ue|dxdt\\
	&\leq \frac{\alpha}{2}\int_{B_n}|\na\ue|^pdxdt + C(\alpha)|Q_T| + \int_{E_n}|\na\ue|dxdt,
\end{align*}
where Young's inequality was applied at the last step. Inserting these estimates into \eqref{e29} yields 
\begin{equation}\label{est_1}
\begin{aligned}
	&\alpha \int_{B_n}|\na\ue|^pdxdt + 2\int_{Q_T}\Ge(\ue,\na\ue)\phi_n(\ue)dxdt\\
	&\leq C\left(\|u_0\|_{L^1(\Omega_0)}, \|f\|_{L^1(Q_T)}, \alpha, \beta, \vv, T\right) + 2\|\vv\|_{\infty}\int_{E_n}|\na\ue|dxdt,
\end{aligned}
\end{equation}
which implies the desired estimate and therefore completes the proof of Lemma \ref{prove_cond2}.
\end{proof}
\begin{proof}[Proof of Lemma \ref{UniformBounds}]
	The proof of Lemma \ref{UniformBounds} is an immediate consequence of Lemmas \ref{lem:inter}, \ref{prove_cond1} and \ref{prove_cond2}. 
\end{proof}
\subsection{Uniform bounds of the nonlinearity}
\begin{lemma}\label{bound_non}
	Let $\ue$ be a solution of \eqref{approx}. Then the following estimate holds
	\begin{equation}\label{bound_non_1}
		\|\Ge(\ue,\na\ue)\|_{L^1(Q_T)} \leq K
	\end{equation}
	where $K$ is {\normalfont independent of $\eps$}. 
\end{lemma}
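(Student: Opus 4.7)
\textbf{Proof proposal for Lemma \ref{bound_non}.}
The plan is to exploit the estimates already produced in the proof of Lemma \ref{prove_cond1} with the specific choice $k=1$. Inspecting \eqref{e18}--\eqref{e19} (with $k=1$), after absorbing the convective term via Young's inequality and using \eqref{increasing} together with $0\leq S_1(z)\leq |z|$, one obtains the uniform estimate
\begin{equation*}
\sup_{t\in(0,T)}\|S_1(\ue)(t)\|_{L^1(\Omega_t)} + \frac{\alpha}{2}\int_{\{|\ue|\leq 1\}}|\nabla \ue|^p\,dxdt + \int_{Q_T}\Ge(\ue,\nabla \ue)\,T_1(\ue)\,dxdt \leq C,
\end{equation*}
where $C$ is independent of $\eps$. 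All three terms on the left are nonnegative; in particular, \eqref{G1} together with $T_1(\ue)$ sharing the sign of $\ue$ forces $\Ge(\ue,\nabla\ue)T_1(\ue)\geq 0$.

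Next I would split the $L^1$ norm according to the size of $\ue$:
\begin{equation*}
\int_{Q_T}|\Ge(\ue,\nabla \ue)|\,dxdt = \int_{\{|\ue|>1\}}|\Ge|\,dxdt + \int_{\{|\ue|\leq 1\}}|\Ge|\,dxdt.
\end{equation*}
On the set $\{|\ue|>1\}$ one has $T_1(\ue)=\mathrm{sgn}(\ue)$, so $|\Ge|=\Ge\,T_1(\ue)$; hence this contribution is controlled by $\int_{Q_T}\Ge(\ue,\nabla\ue)T_1(\ue)\,dxdt\leq C$. On the set $\{|\ue|\leq 1\}$, since $h$ is increasing, \eqref{G2} yields $|\Ge(\ue,\nabla \ue)|\leq |g(\ue,\nabla \ue)|\leq h(1)(\gamma(x,t)+|\nabla \ue|^\sigma)$, and therefore
\begin{equation*}
\int_{\{|\ue|\leq 1\}}|\Ge|\,dxdt \leq h(1)\|\gamma\|_{L^1(\widehat{Q})} + h(1)\int_{\{|\ue|\leq 1\}}|\nabla \ue|^\sigma\,dxdt.
\end{equation*}
Since $\sigma<p$, H\"older's inequality bounds the last integral by $|Q_T|^{1-\sigma/p}\bigl(\int_{\{|\ue|\leq 1\}}|\nabla \ue|^p\,dxdt\bigr)^{\sigma/p}$, which is uniformly bounded by the estimate recalled above. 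Summing the two contributions gives the desired bound \eqref{bound_non_1}.

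I do not foresee any real obstacle: the only delicate point, that $T_1(\ue)$ is an admissible test function for \eqref{approx}, is precisely the content of the weak chain rule \eqref{t0} already established in the proof of Lemma \ref{prove_cond1}, so no new machinery is required. The crucial structural ingredient is the sign condition \eqref{G1}, which turns $\int_{Q_T}\Ge T_1(\ue)\,dxdt$ into a genuine $L^1$-control of $\Ge$ outside $\{|\ue|\leq 1\}$, while the subcritical growth \eqref{G2} combined with the uniform $L^p$-bound on $\nabla T_1(\ue)$ handles the complementary region.
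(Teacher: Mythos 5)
Your proposal is correct, and it is a cleaner route than the paper's. The paper fixes an arbitrary $n$, splits $\|\Ge(\ue,\na\ue)\|_{L^1(Q_T)}$ at level $n+1$, and then handles the far region $G_2=\int_{\{|\ue|\geq n+1\}}|\Ge|$ by testing with the saturated bump $\phi_n$ and invoking the intermediate inequality \eqref{est_1} from Lemma~\ref{prove_cond2} together with Lemma~\ref{UniformBounds} (with $q=1$) to control $\|\na\ue\|_{L^1(Q_T)}$; for the near region $G_1$ it further decomposes $\{|\ue|\leq n+1\}$ into the slabs $B_0,\dots,B_n$ and again uses \eqref{est_1} slab by slab. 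You instead split at level $1$ and extract directly, from the proof of Lemma~\ref{prove_cond1} with $k=1$, the two uniform bounds $\int_{Q_T}\Ge\,T_1(\ue)\,dxdt\leq C$ and $\int_{\{|\ue|\leq 1\}}|\na\ue|^p\,dxdt\leq C$, which the paper had simply discarded as nonnegative terms. The first bound controls the far region because $T_1(\ue)\,\Ge=|\Ge|$ on $\{|\ue|>1\}$ by \eqref{G1}; the second, combined with \eqref{G2} and H\"older, controls the near region. The net effect is that your argument does not need Lemma~\ref{UniformBounds}, the estimate \eqref{est_1}, or the slab decomposition at all — it depends only on material already available inside the proof of Lemma~\ref{prove_cond1}. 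An additional advantage is that $T_1$ is unambiguously odd, so the sign conclusion $T_1(\ue)\Ge(\ue,\na\ue)\geq 0$ is immediate from \eqref{G1}, whereas the analogous claim for $\phi_n(\ue)\Ge(\ue,\na\ue)$ in the paper quietly relies on $\phi_n$ sharing the sign of its argument, which the formula \eqref{phi_n} as printed does not actually give (its last branch should read $-1$, not $1$). Both proofs reach the same conclusion; yours is shorter, more self-contained, and sidesteps that small sign subtlety.
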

\begin{proof}
	To prove \eqref{bound_non_1} we fix $n\in \mathbb N$ and write
	\begin{equation*}
		\|\Ge(\ue,\na\ue)\|_{L^1(Q_T)} \leq \int\limits_{\{|\ue| \leq n+1\}}|\Ge(\ue,\na\ue)|dxdt + \int\limits_{\{|\ue|\geq n+1\}}|\Ge(\ue,\na\ue)|dxdt =: G_1 + G_2.
	\end{equation*}
	Recall the function $\phi_n$ defined in \eqref{phi_n}, we have $\phi_n(z) = 1$ for $z\geq n+1$. Therefore, by using the fact that $\phi_n(\ue)\Ge(\ue,\na\ue) \geq 0$ thanks to \eqref{G1},
	\begin{align*}
		G_2 &= \int_{\{|\ue|\geq n+1\}}\phi_n(\ue)\Ge(\ue,\na\ue)dxdt\\
		&\leq \int_{Q_T}\phi_n(\ue)\Ge(\ue,\na\ue)dxdt\\
		&\leq C + \|\vv\|_{\infty}\int_{E_n}|\na\ue|dxdt \quad (\text{by using } (\ref{est_1}))\\
		&\leq C + \|\vv\|_{\infty}\|\na\ue\|_{L^1(Q_T)}\\
		&\leq C \quad (\text{by applying Lemma }\ref{UniformBounds} \text{ for } q= 1).
	\end{align*}
	Therefore, $G_2$ is bounded uniformly in $\eps$. We now estimate $G_1$ by using the assumption \eqref{G2}
	\begin{equation}\label{est_G1}
	\begin{aligned}
		G_1&\leq \int_{\{|\ue| \leq n+1\}}h(\ue)(\gamma(x,t) + |\na\ue|^\sigma)dxdt\\
		&\leq h(n+1)\|\gamma\|_{L^1(Q_T)} + h(n+1)\int_{\{|\ue|\leq n+1 \}}|\na\ue|^\sigma dxdt.
	\end{aligned}
	\end{equation}
	Now, recalling $B_j = \{(x,t): j\leq \ue(x,t) \leq j+1 \}$,
	\begin{align*}
		\int_{\{|\ue|\leq n+1\}}|\na\ue|^\sigma dxdt &= \sum_{j=0}^n\int_{B_j}|\na\ue|^\sigma dxdt\\
		&\leq \sum_{j=0}^n|B_j|^{\frac{p-\sigma}{p}}\left(\int_{B_j}|\na\ue|^pdxdt \right)^{\frac{\sigma}{p}}\\
		&\leq |Q_T|^{\frac{p-\sigma}{p}}\sum_{j=0}^n\left(C + \frac{2\|\vv\|_{\infty}}{\alpha}\int_{E_j}|\na\ue|dxdt \right)^{\frac{\sigma}{p}} \quad (\text{using }(\ref{est_1}))\\
		&\leq |Q_T|^{\frac{p-\sigma}{p}}(n+1)\left(C + \frac{2\|\vv\|_{\infty}}{\alpha}\|\na\ue\|_{L^1(Q_T)} \right)^{\frac{\sigma}{p}}\\
		&\leq C(n,T)
	\end{align*}
	where we used Lemma \ref{UniformBounds} with $q = 1$ at the last step. From this and \eqref{est_G1}, it follows that $G_1$ is bounded uniformly in $\eps>0$. Thus \eqref{bound_non_1} is proved.
\end{proof}

\section{Proof of Theorem \ref{thm:main}}\label{proof}
The uniform bounds in Section \ref{approximate} imply that there exists a subsequence of $\{\ue\}_{\eps>0}$ such that
\begin{equation*}
	\ue \rightharpoonup u \quad \text{ weakly in } \quad \LW{q}{1}{q} \quad \text{ for all } \quad 1<q<p - \frac{d}{d+1}.
\end{equation*}
This limit function $u$ is a candidate for a weak solution to \eqref{e1}, but the weak convergence is far from enough to show that it is the case. We need convergence in stronger topologies, especially to pass to the limit for the nonlinearities. We start with a pointwise and $L^1$-convergence.
\begin{lemma}\label{L1convergence}
	Let $\{\ue\}_{\eps>0}$ be solutions to \eqref{approx}. Then there exists a subsequence of $\{\ue\}_{\eps>0}$ (not relabeled) such that
	\begin{equation*}
		\ue \to u \quad \text{ strongly in } L^{s}(Q_T) \quad \text{ for all } \quad 1\leq s < p-\frac{d}{d+1}.
	\end{equation*}
\end{lemma}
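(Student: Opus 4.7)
The strategy has three ingredients: a uniform estimate on the time derivative $\partial_t \ue$ in a negative Sobolev space over the moving domain, the Aubin--Lions lemma for moving domains proved in Appendix \ref{appendix2}, and finally an interpolation/Vitali argument to raise the Lebesgue exponent from $1$ up to $s < p - d/(d+1)$.

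\medskip
\textbf{Step 1: Bound on $\partial_t \ue$.} Reading the approximate equation \eqref{approx} as an identity in distributions,
\begin{equation*}
\partial_t \ue = \div\bigl(a(x,t,\nabla\ue)\bigr) - \div(\ue \vv) - g_\eps(\ue,\nabla\ue) + \fe.
\end{equation*}
By the growth bound \eqref{A2} and Lemma \ref{UniformBounds}, one has $|a(x,t,\nabla\ue)| \leq \varphi + K|\nabla\ue|^{p-1}$ uniformly bounded in $L^{q/(p-1)}(Q_T)$ for every $q < p - d/(d+1)$. Similarly $\ue\vv$ is uniformly bounded in $L^q(Q_T)$ by Lemma \ref{UniformBounds} (even in $L^r$ for the larger $r=q(d+1)/d$), while $g_\eps(\ue,\nabla\ue)$ and $\fe$ are uniformly bounded in $L^1(Q_T)$ by Lemma \ref{bound_non} and \eqref{increasing}. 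Testing the distributional equation against $\psi \in L^\infty(0,T;W^{1,m}_0(\Omega_t))$ with $m$ large enough to dominate all dual exponents above and using that $W^{1,m}_0(\Omega_t) \hookrightarrow L^\infty(\Omega_t)$ when $m > d$, I would obtain
\begin{equation*}
\|\partial_t \ue\|_{L^1(0,T;\,(W^{1,m}_0(\Omega_t))^*)} \leq C,
\end{equation*}
with $C$ independent of $\eps$, for $m$ sufficiently large.

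\medskip
\textbf{Step 2: Aubin--Lions in moving domains.} With $\ue$ uniformly bounded in $L^q(0,T;W^{1,q}_0(\Omega_t))$ for some fixed $q \in (1, p - d/(d+1))$ and $\partial_t \ue$ uniformly bounded in $L^1(0,T;(W^{1,m}_0(\Omega_t))^*)$, I would invoke the moving-domain Aubin--Lions lemma of Appendix \ref{appendix2}, which uses the compact embedding $W^{1,q}_0(\Omega_t) \hookrightarrow \hookrightarrow L^q(\Omega_t)$ valid uniformly in $t$ (the domains $\Omega_t = \zeta_t(\Omega_0)$ being diffeomorphic images of a fixed smooth domain). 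This produces a subsequence, not relabeled, with
\begin{equation*}
\ue \longrightarrow u \quad \text{strongly in } L^q(Q_T),
\end{equation*}
hence $\ue \to u$ pointwise a.e.\ in $Q_T$ along a further subsequence.

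\medskip
\textbf{Step 3: Upgrading the exponent.} To reach strong convergence in $L^s(Q_T)$ for every $s < p - d/(d+1)$, I would combine the a.e.\ convergence with a uniform bound in a strictly larger Lebesgue space via Vitali's theorem. Fix such $s$; then one can pick $q \in (s, p - d/(d+1))$ and set $r = q(d+1)/d > q > s$. The interpolation performed inside the proof of Lemma \ref{lem:inter} (see \eqref{e16_1}--\eqref{e16_2}), applied to $\ue$ which is bounded in $L^\infty(0,T;L^1(\Omega_t))$ by Lemma \ref{prove_cond1} and in $L^q(0,T;W^{1,q}_0(\Omega_t))$ by Lemma \ref{UniformBounds}, yields
\begin{equation*}
\|\ue\|_{L^r(Q_T)} \leq C,
\end{equation*}
with $C$ independent of $\eps$. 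Since $r > s$, the sequence $\{|\ue|^s\}$ is equi-integrable on $Q_T$; combined with $\ue \to u$ a.e., Vitali's convergence theorem gives $\ue \to u$ strongly in $L^s(Q_T)$, which is the claim.

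\medskip
\textbf{Main obstacle.} The delicate point is Step 1: the nonlinearity $a(x,t,\nabla\ue)$ sits only in $L^{q/(p-1)}(Q_T)$, and the right-hand side $g_\eps(\ue,\nabla\ue) + \fe$ is merely $L^1$, so one must accept a very weak time-derivative estimate and make sure the target dual space $W^{-1,m'}(\Omega_t)$ is compatible with the compactness embedding used in the Appendix \ref{appendix2} Aubin--Lions lemma. Choosing $m$ large enough that $W^{1,m}_0(\Omega_t)$ embeds into $L^\infty(\Omega_t) \cap W^{1,(p-1)/q}_0(\Omega_t)^{**}$ simultaneously, uniformly in $t$ through the flow $\zeta_t$, is the technical heart of the argument and is where the restriction $p > (2d+1)/(d+1)$ is exploited.
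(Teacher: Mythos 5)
Your proposal is correct and follows essentially the same route as the paper: one verifies the weak time-derivative estimate \eqref{time_derivative} by testing the approximate equation against smooth test functions and bounding each term using Lemmas \ref{UniformBounds} and \ref{bound_non} together with the growth assumptions, and then one invokes the Aubin--Lions lemma for moving domains (Lemma \ref{AL-lemma}). Your formulation of Step 1 as a uniform bound on $\partial_t\ue$ in $L^1(0,T;(W_0^{1,m}(\Omega_t))^*)$ is equivalent to the paper's $H^m$-based condition \eqref{time_derivative}, since $H^{m'}(\Omega_t)\hookrightarrow W_0^{1,m}(\Omega_t)$ for $m'$ large, uniformly in $t$ via the flow $\zeta_t$. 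Two small remarks. First, your Step 3 (interpolation to $L^r$ with $r=q(d+1)/d$ plus Vitali) is not actually needed: the conclusion of Lemma \ref{AL-lemma} already delivers precompactness in $L^s(Q_T)$ for all $1\le s<q$, and since $q<p-\frac{d}{d+1}$ is arbitrary, the full range of exponents $s<p-\frac{d}{d+1}$ follows at once. Second, your Step 2 claims strong convergence in $L^q(Q_T)$, whereas Lemma \ref{AL-lemma} only yields $L^s$ for $s<q$ strictly; this overstatement is harmless because what you actually carry into Step 3 is the a.e.\ convergence, which does follow.
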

To prove Lemma \ref{L1convergence}, we need an Aubin-Lions lemma for the case of moving domains. A similar lemma was recently shown in \cite{Mou16}, but it is not directly applicable to our case. Therefore, a new version is necessary.
\begin{lemma}[An Aubin-Lions lemma in moving domains]\label{AL-lemma}
	Let $1\leq q <+\infty$ and $\{u_n\}_{n\geq 1}$ be a sequence which is bounded in $L^q(0,T;W_0^{1,q}(\Omega_t))$. Moreover, for any smooth function $\psi\in \mathcal{D}(Q_T)$ it holds
	\begin{equation}\label{time_derivative}
	\left|\int_{Q_T} u_n\partial_t \psi dxdt \right| \leq C\sup_{t\in (0,T)}\|\psi\|_{H^m(\Omega_t)}
	\end{equation}
	for some $m\in \mathbb N$. Then $\{u_n\}_{n\geq 1}$ is precompact in $L^1(Q_T)$, and when $q > 1$ then $\{u_n\}_{n\geq 1}$ is precompact in $L^{s}(Q_T)$ for all $1\leq s<q$.
\end{lemma}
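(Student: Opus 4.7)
The plan is to reduce the problem on the moving domain $Q_T$ to the classical Aubin-Lions lemma on the fixed reference cylinder $\O_0\times(0,T)$ by pulling everything back along the flow $\zeta_t$. Define
\[
	\tilde u_n(y,t):= u_n(\zeta_t(y),t),\qquad (y,t)\in \O_0\times(0,T).
\]
Since $\zeta\in C([0,T];C^1(\R^d))$ and $\zeta_t:\O_0\to\O_t$ is a diffeomorphism, the Jacobian $J(y,t):=|\det D\zeta_t(y)|$ and its inverse are bounded on $\O_0\times[0,T]$. A direct change of variables together with the chain rule $\na_y\tilde u_n=(D\zeta_t)^T(\na_x u_n)\circ\zeta_t$ shows that $\{\tilde u_n\}$ is bounded in $L^q(0,T;W^{1,q}_0(\O_0))$ whenever $\{u_n\}$ is bounded in $L^q(0,T;W^{1,q}_0(\O_t))$.

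The central step is to convert the distributional bound \eqref{time_derivative} into an analogous estimate for $\pa_t\tilde u_n$ on the fixed domain. Given an arbitrary $\tilde\vp\in \mathcal{D}(\O_0\times(0,T))$, I set $\psi(x,t):=\tilde\vp(\zeta_t^{-1}(x),t)$, which belongs to $\mathcal{D}(Q_T)$ because $\zeta_t$ maps $\pa\O_0$ onto $\pa\O_t$. Using the identity $\pa_t\zeta_t^{-1}(x)=-[D\zeta_t(\zeta_t^{-1}(x))]^{-1}\vv(x,t)$, one computes
\[
	\pa_t\psi(x,t)=\pa_t\tilde\vp(\zeta_t^{-1}(x),t)-\na_y\tilde\vp(\zeta_t^{-1}(x),t)\cdot [D\zeta_t(\zeta_t^{-1}(x))]^{-1}\vv(x,t).
\]
Plugging this into the hypothesis and changing variables $x=\zeta_t(y)$ yields
\[
	\left|\int_0^T\!\!\int_{\O_0}\tilde u_n\,J\,\pa_t\tilde\vp\,dydt\right|\leq \left|\int_{Q_T}u_n\pa_t\psi\,dxdt\right|+\left|\int_0^T\!\!\int_{\O_0}\tilde u_n\,J\,\VV\cdot\na_y\tilde\vp\,dydt\right|,
\]
where $\VV(y,t):=[D\zeta_t(y)]^{-1}\vv(\zeta_t(y),t)$ is smooth. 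The first term on the right is controlled by $C(\zeta,m)\|\tilde\vp\|_{L^\infty(0,T;H^m(\O_0))}$ thanks to \eqref{time_derivative} and the fact that composition with the smooth $\zeta_t^{-1}$ preserves $H^m$ up to a constant. The second term is handled using $\|\tilde u_n\|_{L^1(\O_0\times(0,T))}\leq C$ (which follows from the spatial $L^q$-bound via Hölder and boundedness of $J^{-1}$) together with the Sobolev embedding $H^m(\O_0)\hookrightarrow W^{1,\infty}(\O_0)$, valid for $m>d/2+1$. Setting $w_n:=\tilde u_n J$, I conclude that $\{w_n\}$ is bounded in $L^q(0,T;W^{1,q}_0(\O_0))$ while $\{\pa_t w_n\}$ is uniformly bounded in the space $\mathcal{M}([0,T];H^{-m}(\O_0))$ of $H^{-m}$-valued Radon measures on $[0,T]$; the same holds for $\{\tilde u_n\}$ itself because $J$ is smooth and bounded away from zero.

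At this point, Simon's classical Aubin-Lions theorem, in the version that accommodates a measure-valued time derivative, applied to the triple $W^{1,q}_0(\O_0)\hookrightarrow\hookrightarrow L^q(\O_0)\hookrightarrow H^{-m}(\O_0)$, yields precompactness of $\{\tilde u_n\}$ in $L^q(0,T;L^q(\O_0))$ for $q>1$, and in $L^1(0,T;L^1(\O_0))$ for $q=1$. Pulling back via $u_n(x,t)=\tilde u_n(\zeta_t^{-1}(x),t)$ and using once more that $J$ is bounded above and below translates this into strong precompactness of $\{u_n\}$ in $L^s(Q_T)$ for every admissible $s$. The main obstacle is the middle step: the hypothesis controls $\pa_t u_n$ only against test functions supported away from the moving lateral boundary $\Sigma_T$, so transferring it to a fixed-domain estimate inevitably generates the transport term involving $\VV\cdot\na_y\tilde\vp$, which must be absorbed using only the spatial $L^1$-bound and a Sobolev embedding. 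Once this absorption succeeds, the remainder of the proof reduces to classical compactness theory on the cylindrical reference domain.
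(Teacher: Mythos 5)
Your proof takes a genuinely different route from the paper. The paper's argument stays on the moving domain: it spatially mollifies $u_n$ with a scaled kernel $\vp^\eps$, shows $u_n*\vp^\eps\to u_n$ in $L^p(Q_T^\delta)$ uniformly in $n$ using only the gradient bound, proves that for fixed $\eps$ the family $\{u_n*\vp^\eps\}_n$ is bounded in $W^{1,1}(Q_T^\delta)$ (the time derivative is controlled because convolving the test function with $\vp^\eps$ upgrades its $L^\infty$-control to $H^m$-control), concludes local precompactness in $L^1(Q_T^\delta)$ by the compact embedding $W^{1,1}\hookrightarrow\hookrightarrow L^1$, and then invokes Moussa's local-to-global transfer result to extend to all of $Q_T$. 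Your approach instead pulls everything back to the fixed cylinder $\O_0\times(0,T)$ along $\zeta_t$, converts \eqref{time_derivative} into a bound on $\pa_t(\tilde u_n J)$ at the cost of an extra transport term, absorbs that term via $L^1\times L^\infty$ duality plus the embedding $H^m\hookrightarrow W^{1,\infty}$, and finishes with a classical Aubin--Lions--Simon argument on the fixed domain. Both strategies are sound and reach the same conclusion; yours is more classical in spirit, theirs is intrinsic to $Q_T$.

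Two points worth flagging. First, your construction $\psi(x,t)=\tilde\vp(\zeta_t^{-1}(x),t)$ requires $\zeta_t^{-1}\in C^m$ in $x$ (with uniformly bounded derivatives) both to guarantee $\psi\in\mathcal{D}(Q_T)$ and to estimate $\sup_t\|\psi(\cdot,t)\|_{H^m(\O_t)}\lesssim\sup_t\|\tilde\vp(\cdot,t)\|_{H^m(\O_0)}$. The paper's proof never composes with $\zeta_t^{-1}$, and so works under the weaker hypothesis $\zeta\in C([0,T];C^1(\R^d))$ used elsewhere in the paper. As long as $\vv$ is assumed smooth (as in the paper's initial setup) this is not a problem, but it makes your argument more demanding than necessary. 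Second, the step ``Simon's Aubin--Lions theorem in the version accommodating a measure-valued time derivative'' should be spelled out: from $\pa_t\tilde u_n$ bounded in $\mathcal{M}([0,T];H^{-m}(\O_0))$ one first deduces the time-translation estimate $\|\tau_h\tilde u_n-\tilde u_n\|_{L^1(0,T-h;H^{-m}(\O_0))}\leq Ch$ by a Fubini argument, applies Simon's criterion to get precompactness in $L^1(\O_0\times(0,T))$, and then interpolates with the $L^q$-bound to obtain precompactness in $L^s$ for $1\leq s<q$; presenting this as a ready-made black box slightly overstates what is standard.
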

\begin{remark}
	In \cite{Mou16}, instead of \eqref{time_derivative}, the following stronger condition was imposed
	\begin{equation*}
		\left|\int_{Q_T} u_n  \partial_t \psi dxdt \right| \leq C\sum_{|\alpha|\leq m}\|\partial_{\alpha}\psi\|_{\LQ{2}{2}}.
	\end{equation*}
	In our case, due to the fact that the right hand side belongs only $L^1(Q_T)$, it seems that \eqref{time_derivative} is unavoidable.
\end{remark}
\begin{proof}[Proof of Lemma \ref{AL-lemma}]
	Though Lemma \ref{AL-lemma} is an improved version of that in \cite{Mou16}, its proof still follows closely from the ideas therein with some suitable changes. We therefore postpone it and provide the full technical proof in the Appendix \ref{appendix2}.
\end{proof}
%
%	\begin{lemma}[A priori estimates for time derivative]
%		Let $m\in \mathbb N$ such that $H^{m-1}(\Omega_t)\hookrightarrow L^\infty(\Omega_t)$ compactly. For any smooth function $\psi\in \mathcal D(Q_T)$, it holds
%		\begin{equation*}
%			\left|\intTO \psi\partial_t \ue  dxdt \right| \leq C\sup_{t\in (0,T)}\|\psi\|_{H^m(\Omega_t)}.
%		\end{equation*}
%	\end{lemma}
We can now apply the Aubin-Lions lemma to prove Lemma \ref{L1convergence}.
	\begin{proof}[Proof of Lemma \ref{L1convergence}]
		Thanks to Lemma \ref{UniformBounds}, we only need to check the condition \eqref{time_derivative}. First, we choose $m\in \mathbb N$ such that $H^{m-1}(\Omega_t)\subset L^\infty(\Omega_t)$ for all $t\in [0,T]$. Moreover, using similar arguments to Lemma \ref{embedding} we deduce that there exists a constant $C = C(\vv,T)$ such that
		\begin{equation*}
			\|v\|_{L^r(\Omega_t)} \leq C\|v\|_{H^{k}(\Omega_t)} \quad \text{ for all } \quad t\in [0,T]
		\end{equation*}
		for $k\in \{m-1,m\}$ and for any $r\in[1,\infty]$. Now, we multiply the approximating problem \eqref{approx}
		by $\psi\in \mathcal D(Q_T)$ and then integrate on $Q_T$ to get
		\begin{equation}\label{est_2}
		\begin{aligned}
			\int_{Q_T} \ue \partial_t\psi dxdt &= -\int_{Q_T} a(x,t,\na\ue)\cdot \na\psi dxdt - \int_{Q_T}[\na\ue \cdot \vv + \ue\div \vv]\psi dxdt\\
			&- \int_{Q_T} \Ge(\ue,\na\ue)\psi dxdt + \int_{Q_T} f_\varepsilon \psi dxdt.
		\end{aligned}
		\end{equation}
		From the assumption \eqref{A2} of $a$ we have
		\begin{equation*}
			\begin{aligned}
			\left|\int_{Q_T} a(x,t,\na\ue)\cdot \na\psi dxdt \right| &\leq \int_{Q_T}|\varphi||\na\psi|dxdt + K\int_{Q_T}|\na\ue|^{p-1}|\na\psi|dxdt\\
			&\leq \|\varphi\|_{L^{p'}(Q_T)}\|\nabla \psi\|_{L^p(Q_T)} + K\|\na\ue\|_{L^q(Q_T)}^{p-1}\|\na \psi\|_{L^{\frac{q}{q-p+1}}(Q_T)}\\
			&\leq C\sup_{t\in (0,T)}\|\psi\|_{H^m(\Omega_t)}.
			\end{aligned}
		\end{equation*}
		Similarly, by using the bounds in Lemmas \ref{UniformBounds} and \ref{bound_non} and $\|f_\varepsilon\|_{L^1(Q_T)} \leq \|f\|_{L^1(Q_T)}$ we get
		\begin{align*}
			&\left| \int_{Q_T} [\na\ue\cdot\vv + \ue\div \vv]\psi dxdt\right|\\
			&\leq \|\vv\|_{\infty}\|\na\ue\|_{L^q(Q_T)}\|\psi\|_{L^{q'}(Q_T)} + \|\div\vv\|_{\infty}\|\ue\|_{L^q(Q_T)}\|\na\psi\|_{L^{q'}(Q_T)}\\
			&\leq C(\|\psi\|_{L^{q'}(Q_T)} + \|\na\psi\|_{L^{q'}(Q_T)})\\
			&\leq C\sup_{t\in(0,T)}\|\psi\|_{H^m(\Omega_t)},
		\end{align*}
		and
		\begin{equation*}
			\left|\int_{Q_T} \Ge(\ue,\na\ue)\psi dxdt \right| \leq \|\Ge(\ue,\na\ue)\|_{L^1(Q_T)}\|\psi\|_{L^\infty(Q_T)} \leq  C\sup_{t\in (0,T)}\|\psi\|_{H^m(\Omega_t)},
		\end{equation*}
		and
		\begin{equation*}
			\left|\int_{Q_T} f_\varepsilon\psi dxdt \right| \leq \|f\|_{L^1(Q_T)}\|\psi\|_{L^\infty(Q_T)} \leq  C\sup_{t\in (0,T)}\|\psi\|_{H^m(\Omega_t)}.
		\end{equation*}
		Putting all these into \eqref{est_2} we get \eqref{time_derivative}, and therefore Lemma \ref{AL-lemma} implies the desired result of Lemma \ref{L1convergence} since $q < p - \frac{d}{d+1}$ is arbitrary.
%		For the first order, we use $|g^\varepsilon| \leq |g|$, the growth condition \eqref{g} and Lemma \ref{uniform} to estimate
%		\begin{equation*}
%		\begin{aligned}
%			\left|\intTO \Ge(\ue,\na\ue)\psi dxdt\right| &\leq \left|\intTO \zeta \psi dxdt \right| + \left|\intTO|\na\ue|^\sigma |\psi|dxdt \right|\\
%			&\leq C\sup_{t\in(0,T)}\|\psi\|_{H^m(\Omega_t)}.
%		\end{aligned}
%		\end{equation*}
\end{proof}

Due to the nonlinearities in the gradient in $a(x,t,\na\ue)$ and in $\Ge(\ue,\na\ue)$, we need also stronger convergence of the gradient.
\begin{lemma}[Almost everywhere convergence of the gradient]\label{gradients}
	Let $\{\ue\}_{\eps>0}$ be solutions of the approximate problem \eqref{approx}. Then the sequence $\{\nabla \ue \}_{\eps>0}$ converges to $\nabla u$ almost everywhere as $\varepsilon$ goes to zero.
	\end{lemma}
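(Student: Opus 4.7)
The plan is to adapt the Boccardo--Gall\"ouet/Dall'Aglio--Segura de Le\'on monotonicity scheme to the moving-domain framework. Since the uniform estimate of Lemma~\ref{UniformBounds} only places $\na \ue$ in $L^q(Q_T)$ for $q<p-d/(d+1)$, a direct argument on $\na\ue$ is hopeless; instead I would first prove that, for each fixed $k>0$, $\na T_k(\ue)\to \na T_k(u)$ a.e.\ in $Q_T$ along a subsequence, and then recover $\na\ue\to\na u$ a.e.\ by a diagonal argument in $k$.

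For the truncated problem, choose the odd $C^1$ function $\varphi_\lambda(s)=s\,e^{\lambda s^2}$, whose derivative satisfies $\varphi_\lambda'(s)-\lambda|\varphi_\lambda(s)|\ge 1/2$ on $\R$, and test \eqref{approx} against $\phi=\varphi_\lambda\bigl(T_k(\ue)-[T_k(u)]_\mu\bigr)$, where $[\cdot]_\mu$ is a Landes--Mignot-type time mollification compatible with the time-varying domain (needed because $u$ has no a priori distributional time derivative in a useful space). A weak chain rule analogous to \eqref{t0} handles the parabolic term with a nonnegative contribution up to $o_\mu(1)$; the convective term $\di(\ue\vv)$ is controlled by Lemma~\ref{UniformBounds}; and the reaction term splits as follows. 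On $\{|\ue|>k\}$, the truncation forces $\ue$ and $w_{\eps,\mu}=T_k(\ue)-[T_k(u)]_\mu$ to share sign (e.g.\ $\ue>k$ makes $T_k(\ue)=k\ge[T_k(u)]_\mu$), hence $\Ge(\ue,\na\ue)\varphi_\lambda(w_{\eps,\mu})\ge 0$ by \eqref{G1} and this piece can be dropped. On $\{|\ue|\le k\}$, \eqref{G2} yields the pointwise bound $h(k)(\gamma+|\na T_k(\ue)|^\sigma)|\varphi_\lambda(w_{\eps,\mu})|$, and since $\sigma<p$, the $|\na T_k(\ue)|^\sigma$ piece is absorbed by Young's inequality into the coercive term $\alpha|\na T_k(\ue)|^p\varphi_\lambda'(w_{\eps,\mu})$ produced by \eqref{A3}---provided $\lambda=\lambda(k,\alpha,h)$ is taken large enough, which is precisely the role of the exponential weight.

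After sending $\mu\to 0$ and then $\eps\to 0$, the above yields
\[
\limsup_{\eps\to 0}\int_{Q_T}\bigl[a(x,t,\na T_k(\ue))-a(x,t,\na T_k(u))\bigr]\bigl(\na T_k(\ue)-\na T_k(u)\bigr)\varphi_\lambda'(w_\eps)\,\dx\;\le\;0,
\]
where $w_\eps=T_k(\ue)-T_k(u)$. The integrand is nonnegative by the monotonicity in \eqref{A1}, so the integral tends to zero. Invoking the quantitative lower bound in \eqref{A1} together with the growth \eqref{beta} on $\Theta$, and applying H\"older's inequality with the uniform gradient bound of Lemma~\ref{UniformBounds} for $q$ arbitrarily close to $p-d/(d+1)$, the restriction \eqref{varrho} on $\varrho$ is exactly what guarantees that $\na T_k(\ue)\to\na T_k(u)$ in $L^\tau(Q_T)$ for some $\tau>0$, hence a.e.\ along a further subsequence. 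Because $\ue\to u$ a.e.\ by Lemma~\ref{L1convergence}, on the set $\{|u|<k\}$ we have $|\ue|<k$ eventually, so $\na\ue=\na T_k(\ue)$ and $\na u=\na T_k(u)$ there; letting $k\to\infty$ along a countable sequence exhausts $Q_T$ up to a null set, and diagonalization produces the desired a.e.\ convergence.

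The main technical obstacle is the Landes--Mignot time-regularization step, which must be engineered for the non-cylindrical domain $Q_T$ so that both the Dirichlet boundary condition on $\Sigma_T$ and the weak formulation are preserved, and so that the error $T_k(u)-[T_k(u)]_\mu$ tends to zero in a topology strong enough to vanish against the various terms after integration. Once this is settled, the remainder is a careful but standard execution of the monotonicity trick of \cite{BG89,GS01} under the weakened structure condition \eqref{A1}.
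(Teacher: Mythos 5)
Your proposal follows a genuinely different route from the paper. The paper does not compare $\ue$ with the limit $u$ at all; instead it shows $\{\na\ue\}$ is Cauchy in measure by subtracting the equations for two approximate solutions $\ue$ and $\uee$ and testing with $T_\delta(\ue-\uee)$. This has two decisive advantages over your scheme. First, both $\ue$ and $\uee$ have $\pa_t\ue\in L^{p'}(0,T;W^{-1,p'}(\Omega_t))$, so the weak chain rule \eqref{t0} applies directly and \emph{no time regularization of the limit is needed}; the Landes--Mignot mollification $[T_k(u)]_\mu$ on a non-cylindrical domain, which you correctly identify as your main obstacle, is precisely the step the paper's argument avoids, and you leave it unconstructed. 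Second, the reaction term requires no sign splitting and no exponential weight $\varphi_\lambda$: since $|T_\delta|\le\delta$ and $\|\Ge(\ue,\na\ue)\|_{L^1(Q_T)}$ is already bounded uniformly in $\eps$ (Lemma \ref{bound_non}), that term is simply $O(\delta)$. The paper then controls the bad set $\{|\na\ue-\na\uee|\ge\mu,\ |\ue-\uee|\le\delta,\ |\na\ue|,|\na\uee|\le k\}$ via the quantitative monotonicity \eqref{A1}, H\"older with the growth \eqref{beta} of $\Theta$, and the restriction \eqref{varrho} together with Lemma \ref{UniformBounds} --- the same final mechanism you invoke, but applied to $\ue-\uee$ rather than to $T_k(\ue)-T_k(u)$.

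As it stands, your write-up has a genuine gap: the moving-domain time mollification compatible with $\Sigma_T$ and with the weak formulation is asserted to exist but not built, and without it the parabolic term in your test-function computation cannot be given a sign. If you want to salvage your route you would need to carry out that construction (e.g.\ by pulling back to $\Omega_0$ via $\zeta_t$ and mollifying there); but the simpler fix is to replace the comparison with $u$ by a comparison between two approximate solutions, at which point the exponential weight and the truncation of the diffusion term become unnecessary and your argument collapses into the paper's.
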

\begin{proof}
We will show that $\{\nabla \ue \}_\varepsilon$ is a Cauchy sequence in measure, i.e. for all $\mu > 0$
\begin{equation}\label{gra1}
\calA:= \text{meas}\{(x,t)\in Q_T: |\nabla u_{\varepsilon'} - \nabla \ue | > \mu\} \to 0,
\end{equation} 
as $\varepsilon', \varepsilon \to 0$. From this, after extracting a subsequence, we have the convergence $\na\ue \to \na u$ almost everywhere.

To prove \eqref{gra1}, we let $k>0$ and $\delta>0$ be chosen later and observe that
\begin{equation*}
	\calA \subset \calA_1 \cup \calA_2 \cup \calA_3 \cup \calA_4
\end{equation*}
where
\begin{equation*}\label{split}
\begin{aligned}
	&\calA_1 = \{(x,t)\in Q_T:\; |\na\ue| \geq k \},\\
	&\calA_2 = \{(x,t)\in Q_T: \; |\na u_{\eps'}| \geq k \},\\
	&\calA_3 = \{(x,t)\in Q_T:\; |\ue - u_{\eps'}| \geq \delta \},\\
	&\calA_4 = \{(x,t)\in Q_T: \; |\na\ue - \na u_{\eps'}| \geq \mu, |\na\ue| \leq k, |\na u_{\eps'}| \leq k \text{ and } |\ue - u_{\eps'}| \leq \delta \}.
\end{aligned}
\end{equation*}
We will estimate $\calA_i$, $i=1,\ldots, 4$ separately. Firstly, for $\calA_1$, by applying Lemma \ref{UniformBounds} with $q = 1$, we have
\begin{equation}\label{est_A1}
	|\calA_1| = \int_{\calA_1}dxdt \leq \frac{1}{k}\int_{\calA_1}|\na\ue|dxdt \leq \frac{1}{k}\|\na\ue\|_{L^1(Q_T)} \leq \frac{C}{k}
\end{equation}
for $C$ independent of $\eps$. Similarly,
\begin{equation}\label{est_A2}
|\calA_2| \leq \frac{1}{k}\|\na u_{\eps'}\|_{L^1(Q_T)} \leq \frac{C}{k}.
\end{equation}
For $\calA_3$, 
\begin{equation}\label{est_A3}
	|\calA_3| = \int_{\calA_3}dxdt \leq \frac{1}{\delta}\|\ue - u_{\eps'}\|_{L^1(Q_T)}.
\end{equation}
It remains to estimate $\calA_4$. Firstly, by using $T_{\delta}(\ue - u_{\eps'}) = \ue - u_{\eps'}$ on the set $\{(x,t): \;|\ue - u_{\eps'}|\leq \delta\}$, we have
\begin{equation}\label{h1}
\begin{aligned}
	|\calA_4| \leq \frac{1}{\mu}\int_{\{|\ue - u_{\eps'}| \leq \delta\}}|\na(\ue - u_{\eps'})|dxdt = \frac{1}{\mu}\int_{Q_T}\chi_{\{|\ue - u_{\eps'}| \leq \delta\}}|\na (\ue - u_{\eps'})|dxdt.
\end{aligned}
\end{equation}
Subtracting the equation \eqref{approx} for $\eps$ and $\eps'$, then taking $\phi = T_\delta(\ue - u_{\eps'})$ as a test function, we get
\begin{equation}\label{h2}
\begin{aligned}
&\int_{\Omega_T}S_\delta(\ue(x,T) - u_{\eps'}(x,T))dx + \int_{Q_T}(a(x,t,\na\ue) - a(x,t,\na u_{\eps'}))\na T_{\delta}(\ue - u_{\eps'})dxdt\\
&= \int_{\Omega_0}S_\delta(u_{0,\eps} - u_{0,\eps'})dx + \int_{Q_T}(\fe - f_{\eps'})T_{\delta}(\ue - u_{\eps'})dxdt\\
&- \int_{Q_T}((\ue - u_{\eps'})\div\vv + \vv\cdot\na(\ue - u_{\eps'}))T_{\delta}(\ue - u_{\eps'})dxdt\\
&- \int_{Q_T}(\Ge(\ue,\na\ue) - g_{\eps'}(u_{\eps'},\na u_{\eps'}))T_{\delta}(\ue - u_{\eps'})dxdt.
\end{aligned}
\end{equation}
Since $S_\delta$ is nonnegative and thanks to the assumption \eqref{A1}, the left hand side of \eqref{h2} is bounded below by
\begin{equation}\label{h3}
\begin{aligned}
	&\int_{Q_T}(a(x,t,\na\ue)-a(x,t,\na u_{\eps'})(\na \ue - \na u_{\eps'})\chi_{\{|\ue - u_{\eps'}| \leq \delta\}}dxdt\\
	&\geq C\int_{Q_T}\chi_{\{|\ue - u_{\eps'}| \leq \delta\}}\frac{1}{\Theta(x,t,\na\ue,\na u_{\eps'})}|\na\ue - \na u_{\eps'}|^{\theta}dxdt.
\end{aligned}
\end{equation}
For the right hand side of \eqref{h2}, we use $|T_\delta(z)| \leq \delta$ and $S_\delta(z) \leq \delta|z|$ to estimate
\begin{equation}\label{h4}
\begin{aligned}
	\text{ Right hand side of } (\ref{h2}) &\leq \delta\|u_{0,\eps} - u_{0,\eps'}\|_{L^1(\Omega_0)} + \delta\|f_\eps - f_{\eps'}\|_{L^1(Q_T)}\\
	&\; + \delta\|\div \vv\|_{\infty}\|\ue - u_{\eps'}\|_{L^1(Q_T)} + \delta\|\vv\|_{\infty}\|\na(\ue - u_{\eps'})\|_{L^1(Q_T)}\\
	&\; + \delta(\|\Ge(\ue,\na\ue)\|_{L^1(Q_T)} + \|g_{\eps'}(u_{\eps'},\na u_{\eps'})\|_{L^1(Q_T)})\\
	&\leq C\delta
\end{aligned}
\end{equation}
with $C$ independent of $\eps, \eps'$, and where we used the fact that $\{u_{0,\eps}\}$ is bounded in $L^1(\Omega_0)$, and all $\{\fe\}, \{\ue\}, \{\na \ue\}, \{\Ge(\ue,\na\ue)\}$ are bounded in $L^1(Q_T)$. Inserting \eqref{h3} and \eqref{h4} into \eqref{h2} gives
\begin{equation*}
	\int_{Q_T}\chi_{\{|\ue - u_{\eps'}| \leq \delta \}}\frac{1}{\Theta(x,t,\na\ue,\na u_{\eps'})}|\na\ue - \na u_{\eps'}|^{\theta}dxdt \leq C\delta.
\end{equation*}
By using H\"older's inequality, we have
\begin{align*}
\int_{Q_T}\chi_{\{|\ue - u_{\eps'}| \leq \delta \}}|\na\ue - \na\uee|dxdt
&\leq \left(\int_{Q_T}\chi_{\{|\ue - u_{\eps'}| \leq \delta \}}\frac{1}{\Theta(x,t,\na\ue,\na\uee)}|\na\ue - \na\uee|^{\theta}dxdt\right)^{\frac 1\theta}\\
&\quad \times \left(\int_{Q_T}\chi_{\{|\ue - u_{\eps'}| \leq \delta \}}\Theta(x,t,\na\ue,\na\uee)^{\frac{1}{\theta-1}}dxdt \right)^{\frac{\theta-1}{\theta}}\\
&\leq C\delta^{\frac 1\theta}\left(\int_{Q_T}\left[1+|\na\ue|^{\frac{\varrho}{\theta-1}} + |\na\uee|^{\frac{\varrho}{\theta-1}}\right]dxdt\right)^{\frac{\theta-1}{\theta}}
\end{align*}
where we used \eqref{beta} at the last step. Thanks to \eqref{varrho}, $\dfrac{\varrho}{\theta-1} < p - \dfrac{d}{d+1}$. Therefore, Lemma \ref{UniformBounds} implies
\begin{equation*}
	\int_{Q_T}\left[|\na\ue|^{\frac{\varrho}{\theta-1}} + |\na\uee|^{\frac{\varrho}{\theta-1}} \right]dxdt \leq C
\end{equation*}
and thus
\begin{equation*}
	\int_{Q_T}\chi_{\{|\ue-\uee|\leq \delta\}}|\na\ue - \na\uee|dxdt \leq C\delta^{\frac 1\theta}.
\end{equation*}
Inserting this into \eqref{h1} leads to
\begin{equation}\label{est_A4}
	|\calA_4| \leq \frac{C\delta^{\frac 1\theta}}{\mu}
\end{equation}
for a constant $C$ independent of $\eps, \eps'$.

Now let $\kappa>0$ be arbitrary. We first choose $k$ to be large enough so that \eqref{est_A1} and \eqref{est_A2} give
\begin{equation*}
	|\calA_1| + |\calA_2| \leq \frac{\kappa}{2}.
\end{equation*}
We next choose $\delta$ to be small enough ($k$ is now fixed) so that \eqref{est_A4} implies
\begin{equation*}
|\calA_4| \leq \frac{\kappa}{4}.
\end{equation*}
With $k$ and $\delta$ are fixed, since $\{\ue\}_{\eps>0}$ is a Cauchy sequence in $L^1(Q_T)$, thanks to Lemma \ref{L1convergence}, there exists $\eps_0 > 0$ such that, for all $\eps, \eps' \leq \eps_0$, \eqref{est_A3} implies
\begin{equation*}
	|\calA_3| \leq \frac{1}{\delta}\|\ue - u_{\eps'}\|_{L^1(Q_T)} \leq \frac{\kappa}{4}.
\end{equation*}
Therefore,
\begin{equation*}
	|\mathcal A| \leq |\calA_1| + |\calA_2| + |\calA_3| + |\calA_4| \leq \kappa \quad \text{ for all } \quad \eps, \eps' \leq \eps_0.
\end{equation*}
Thus \eqref{gra1} is proved.
\end{proof}
We are now ready to get strong convergence for the nonlinear term $a(x,t,\na u)$.
\begin{lemma}\label{convergence_a}
Let $\{\ue\}_{\eps>0}$ be solutions to the equation \eqref{approx}. Then, up to a subsequence,
\begin{equation*}
	a(x,t,\na\ue) \to a(x,t,\na u) \quad \text{ strongly in} \quad L^s(Q_T) \quad \text{for all} \quad 1\leq s < 1+\frac{1}{(p-1)(d+1)}.
\end{equation*}
\end{lemma}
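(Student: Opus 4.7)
The plan is to combine the almost-everywhere convergence of gradients from Lemma \ref{gradients} with an equi-integrability argument based on the growth assumption \eqref{A2}, then invoke Vitali's convergence theorem to upgrade to strong $L^s$ convergence.

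First, since $a$ is a Carath\'eodory function by \eqref{A0}, the a.e. convergence $\na\ue \to \na u$ established in Lemma \ref{gradients} implies
\begin{equation*}
    a(x,t,\na\ue) \to a(x,t,\na u) \quad \text{a.e. in } Q_T,
\end{equation*}
after passing to a suitable subsequence. The remaining task is to show equi-integrability of $\{|a(x,t,\na\ue)|^s\}_{\eps}$, for which it suffices to exhibit a uniform bound in some $L^{s'}(Q_T)$ with $s' > s$.

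Fix $s < 1 + \frac{1}{(p-1)(d+1)}$ and choose $s'$ strictly between $s$ and $1 + \frac{1}{(p-1)(d+1)}$. A direct computation shows $s'(p-1) < p - \frac{d}{d+1}$, and moreover $s' < p' = 1 + \frac{1}{p-1}$. By the growth bound \eqref{A2},
\begin{equation*}
    |a(x,t,\na\ue)|^{s'} \leq 2^{s'-1}\bigl(\varphi(x,t)^{s'} + K^{s'}|\na\ue|^{s'(p-1)}\bigr).
\end{equation*}
Since $\varphi \in L^{p'}(\widehat{Q})$ and $s' \leq p'$, we have $\varphi^{s'} \in L^1(\widehat{Q}) \subset L^1(Q_T)$. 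For the second term, Lemma \ref{UniformBounds} applied with $q = s'(p-1) < p - \frac{d}{d+1}$ gives a uniform bound
\begin{equation*}
    \int_{Q_T}|\na\ue|^{s'(p-1)}\dx \leq C(T),
\end{equation*}
with $C(T)$ independent of $\eps$. Hence $\{a(x,t,\na\ue)\}_\eps$ is bounded in $L^{s'}(Q_T)$.

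Since $s'/s > 1$, the bound on $\{a(x,t,\na\ue)\}_\eps$ in $L^{s'}(Q_T)$ implies that $\{|a(x,t,\na\ue)|^s\}_\eps$ is equi-integrable on $Q_T$. Combining a.e. convergence with equi-integrability, Vitali's convergence theorem applied to $|a(x,t,\na\ue) - a(x,t,\na u)|^s$ yields
\begin{equation*}
    \int_{Q_T}|a(x,t,\na\ue) - a(x,t,\na u)|^s\dx \to 0 \quad \text{as } \eps \to 0,
\end{equation*}
which is the desired $L^s(Q_T)$ convergence. No real obstacle is expected here, as all the heavy lifting has already been done in Lemmas \ref{UniformBounds} and \ref{gradients}; this lemma is essentially a bookkeeping exercise combining uniform integrability with pointwise convergence.
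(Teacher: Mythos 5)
Your argument is correct and follows essentially the same route as the paper: derive a.e.\ convergence of $a(x,t,\nabla u_\eps)$ from Lemma~\ref{gradients} and the Carath\'eodory property, obtain a uniform bound via \eqref{A2} and Lemma~\ref{UniformBounds} in $L^{s'}(Q_T)$ for $s'$ ranging over the same open interval, and upgrade to strong $L^s$-convergence by equi-integrability (Vitali in your writeup, Egorov in the paper, which are interchangeable here). Your explicit choice of an intermediate exponent $s'>s$ makes the equi-integrability step slightly more transparent than the paper's phrasing, but the substance is identical.
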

\begin{proof}
	From Lemma \ref{gradients} and the fact that $a$ is continuous with respect to the third variable, we have
	\begin{equation}\label{est_a1}
		a(x,t,\na \ue) \to a(x,t,\na u) \quad \text{ almost everywhere } \quad \text{ in } \quad Q_T.
	\end{equation}
	By using assumption \eqref{A2} and Lemma \ref{UniformBounds} we have for any $1\leq s < 1+\frac{1}{(p-1)(d+1)}$,
	\begin{align}\label{est_a2}
		\|a(x,t,\na\ue)\|_{L^s(Q_T)}^s \leq C\int_{Q_T}|\varphi|^sdxdt + CK^{r}\int_{Q_T}|\na\ue|^{s(p-1)}dxdt \leq C
	\end{align}
	thanks to $s(p-1) < p-\frac{d}{d+1}$ and $s < 1 + \frac{1}{(p-1)(d+1)} < \frac{p}{p-1} = p'$. From \eqref{est_a1} and \eqref{est_a2}, the Egorov theorem implies that $\{a(x,t,\na\ue)\}_{\eps>0}$ is precompact in $L^s(Q_T)$ for all $1\leq s < 1+\frac{1}{(p-1)(d+1)}$, which finishes the proof of Lemma \ref{convergence_a}.
\end{proof}
Due to the subcritial growth of the nonlinearity $g$ in \eqref{G2}, its convergence cannot be obtained in the same way as for $a$ in Lemma \ref{convergence_a}. A different approach should be used, for which we need the following lemma.
\begin{lemma}\label{lem:g}
	Let $\{\ue\}_{\eps>0}$ be solutions to \eqref{approx}. Then
	\begin{equation*}\label{bound_non_2}
	\lim_{k\to\infty}\sup_{\eps>0}\int_{\{|\ue|\geq k\}}|\Ge(\ue,\na\ue)|dxdt = 0.
	\end{equation*}
\end{lemma}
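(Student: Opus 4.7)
The plan is to reuse the test function $\phi_n(\ue)$ from Lemma~\ref{prove_cond2} in the approximate equation~\eqref{approx}, but this time to track the $n$-dependence of the right-hand side carefully, rather than replacing everything with $n$-independent constants as was done to reach~\eqref{est_1}. Two observations will drive the argument: (i) by the sign condition~\eqref{G1}, as already exploited in Lemma~\ref{prove_cond2}, the integrand $\phi_n(\ue)\,g_\eps(\ue,\na\ue)$ is nonnegative, and on $\{|\ue|\geq n+1\}$ one has $|\phi_n(\ue)|=1$, so there $\phi_n(\ue)\,g_\eps(\ue,\na\ue)=|g_\eps(\ue,\na\ue)|$; (ii) $\phi_n(\ue)$ is supported in $\{|\ue|\geq n\}$ with $|\phi_n|\leq 1$, so the remaining terms on the right-hand side will be integrals over sets that shrink in measure as $n\to\infty$.

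Testing~\eqref{approx} with $\phi_n(\ue)$ via a weak chain rule analogous to~\eqref{t0}, and dropping the nonnegative terms $\int_{\Omega_T}\Psi_n(\ue(T))dx$ and $\alpha\int_{B_n}|\na\ue|^p\,dxdt$ on the left, I expect to arrive at
\begin{align*}
\int_{\{|\ue|\geq n+1\}} |g_\eps(\ue,\na\ue)|\,dxdt &\leq \int_{\{|u_{0,\eps}|\geq n\}}|u_{0,\eps}|\,dx + \int_{\{|\ue|\geq n\}}|f_\eps|\,dxdt \\
&\quad + \|\vv\|_\infty \int_{\{|\ue|\geq n\}}|\na\ue|\,dxdt + \|\div\vv\|_\infty \int_{\{|\ue|\geq n\}}|\ue|\,dxdt.
\end{align*}

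Each term on the right will then be shown to vanish uniformly in $\eps$ as $n\to\infty$. For the first two, I use the equi-integrability of $\{u_{0,\eps}\}$ in $L^1(\Omega_0)$ and of $\{f_\eps\}$ in $L^1(Q_T)$, both inherited from the strong $L^1$-convergence in~\eqref{u0f} (the first term is estimated directly; the second additionally needs the measure of $\{|\ue|\geq n\}$ to go to zero uniformly in $\eps$). For the $|\na\ue|$ and $|\ue|$ terms, I apply H\"older's inequality together with the uniform bounds $\|\na\ue\|_{L^q(Q_T)}$ and $\|\ue\|_{L^r(Q_T)}$ from Lemma~\ref{UniformBounds} for some $q,r>1$, again combined with the uniform vanishing of $|\{|\ue|\geq n\}|$.

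The main obstacle is establishing the uniform (in $\eps$) decay $|\{|\ue|\geq n\}|\to 0$ as $n\to\infty$. This will follow from a Markov-type inequality $|\{|\ue|\geq n\}|\leq n^{-r}\|\ue\|_{L^r(Q_T)}^r$ together with a uniform $L^r(Q_T)$-bound on $\ue$ for some $r>1$. Such a bound is obtained by applying the Sobolev embedding of Lemma~\ref{embedding} to the $L^q(0,T;W^{1,q}_0(\Omega_t))$-estimate of Lemma~\ref{UniformBounds} (choosing $1<q<p-d/(d+1)$) to get $\ue\in L^q(0,T;L^{q^*}(\Omega_t))$, and then interpolating with the $L^\infty(0,T;L^1(\Omega_t))$-bound of Lemma~\ref{prove_cond1}; this is exactly the interpolation carried out in~\eqref{e16} inside the proof of Lemma~\ref{lem:inter}, and yields the desired exponent $r=q(d+1)/d>1$.
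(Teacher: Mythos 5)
Your proposal is correct and takes a genuinely cleaner route than the paper's own proof.

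The paper tests with $T_k(\ue)$, which is \emph{not} supported on a small set — $|T_k|\leq k$ everywhere — so to extract smallness it must introduce a second parameter $M$ and split $|T_k(z)|\leq M + k\chi_{\{|z|>M\}}$ (and likewise for $S_k$), leading to a two-stage ``first make the tail small in $M$, then make the remainder small in $k$'' argument. You instead exploit the fact that $\phi_n(\ue)$ is already supported in $\{|\ue|\geq n\}$ with $|\phi_n|\leq 1$, so after testing you land directly on integrals over the shrinking set $\{|\ue|\geq n\}$; no auxiliary parameter is needed, and a single passage $n\to\infty$ does the job. Your derived inequality
\begin{equation*}
\int_{\{|\ue|\geq n+1\}}|\Ge(\ue,\na\ue)|\,dxdt \leq \int_{\{|u_{0,\eps}|\geq n\}}|u_{0,\eps}|\,dx + \int_{\{|\ue|\geq n\}}|\fe|\,dxdt + \|\vv\|_\infty\int_{\{|\ue|\geq n\}}|\na\ue|\,dxdt + \|\div\vv\|_\infty\int_{\{|\ue|\geq n\}}|\ue|\,dxdt
\end{equation*}
is indeed what comes out of \eqref{test-phi_n_1} and \eqref{e28} after dropping the nonnegative terms $\int\Psi_n(\ue(T))$ and $\alpha\int_{B_n}|\na\ue|^p$ and using $\Psi_n(z)\leq |z|\chi_{\{|z|\geq n\}}$. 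The remaining ingredients — Markov from $\sup_\eps\|\ue\|_{L^1(Q_T)}\leq T\beta$ to get $|\{|\ue|\geq n\}|\to 0$ uniformly in $\eps$, equi-integrability of $\{u_{0,\eps}\}$ and $\{\fe\}$ from their $L^1$ convergence, and H\"older with the uniform $L^q$/$L^r$ ($q,r>1$) bounds from Lemma~\ref{UniformBounds}, \eqref{e16_1}--\eqref{e16_2} for the $\na\ue$ and $\ue$ terms — are all available and correctly invoked. A further structural advantage of your argument: the paper's proof relies on $\|\na\ue-\na u\|_{L^1(Q_T)}\to 0$, hence implicitly on the a.e.\ gradient convergence of Lemma~\ref{gradients}, whereas your equi-integrability-via-H\"older argument needs only the uniform bounds from Section~\ref{approximate}, so Lemma~\ref{lem:g} could be proved independently of (and earlier than) Lemma~\ref{gradients}.
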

\begin{proof}
	Since $T_k(\ue) = k$ for $\ue \geq k$, we have
	\begin{equation}\label{l1}
	\int_{\{|\ue|\geq k\}}|\Ge(\ue,\na\ue)|dxdt \leq \frac{1}{k}\int_{Q_T}\Ge(\ue,\na\ue)T_k(\ue)dxdt.
	\end{equation}
	By integrating \eqref{e18} on $(0,T)$ and using \eqref{e17} we obtain, in particular,
	\begin{equation}\label{l2}
	\begin{aligned}
	&\int_{Q_T}\Ge(\ue,\na\ue)T_k(\ue)dxdt\\
	&\leq \int_{\Omega_0}S_k(u_{0,\eps})dx + \int_{Q_T}|\fe T_k(\ue)|dxdt \\
	&\quad + \|\vv\|_{\infty}\int_{Q_T}|\na \ue||T_k(\ue)|dxdt + \|\div\vv\|_{\infty}\int_{Q_T}|\ue||T_k(\ue)|dxdt.
	\end{aligned}
	\end{equation}
	Let $M>0$. We then have the following useful estimates
	\begin{equation*}
		0 \leq S_k(z) \leq M^2 + k|z|\chi_{\{|z|>M\}} \quad \text{ and } \quad |T_k(z)| \leq M + k\chi_{\{|z|>M\}}.
	\end{equation*}
	Therefore,
	\begin{equation*}
		\int_{\Omega_0}S_k(u_{0,\eps})dx \leq M^2|\Omega_0| + k\int_{\{|u_{0,\eps}|>M\}}|u_{0,\eps}|dx,
	\end{equation*}
	\begin{equation*}
		\int_{Q_T}|\fe T_k(\ue)|dxdt \leq M\|\fe\|_{L^1(Q_T)} + k\int_{\{|\ue|>M \}}|\fe|dxdt,
	\end{equation*}
	\begin{equation*}
		\int_{Q_T}|\na\ue||T_k(\ue)|dxdt \leq M\|\na\ue\|_{L^1(Q_T)} + k\int_{\{|\ue|>M\}}|\na\ue|dxdt,
	\end{equation*}
	and
	\begin{equation*}
		\int_{Q_T}|\ue||T_k(\ue)|dxdt \leq M\|\ue\|_{L^1(Q_T)} + k\int_{\{|\ue|>M\}}|\ue|dxdt.
	\end{equation*}
	Using these estimates in \eqref{l1} and \eqref{l2}, we get
	\begin{equation}\label{l3}
	\begin{aligned}
		\int_{\{|\ue|\geq k\}}|\Ge(\ue,\na\ue)|dxdt &\leq \frac{M^2}{k}|\Omega_0| + \frac{CM}{k}\left(\|\fe\|_{L^1(Q_T)} + \|\ue\|_{\LW{1}{1}{1}}\right)\\
		&\quad +\int_{\{|u_{0,\eps}|>M\}}|u_{0,\eps}|dx + \int_{\{|\ue|>M \}}|\fe|dxdt\\
		&\quad + \|\vv\|_\infty\int_{\{|\ue|>M\}}|\na\ue|dxdt +  \|\div \vv\|_\infty\int_{\{|\ue|>M\}}|\ue|dxdt.
	\end{aligned}
	\end{equation}
	Due to the uniform bound of $\{\|\ue\|_{L^1(Q_T)}\}_{\eps>0}$ we have
	\begin{equation*}
		\lim_{M\to\infty}\sup_{\eps>0}|\{(x,t)\in Q_T:\; \ue(x,t)>M \}| \leq \lim_{M\to\infty}\frac{1}{M}\sup_{\eps>0}\|\ue\|_{L^1(Q_T)} = 0.
	\end{equation*}
	Therefore, from the fact that, as $\eps\to 0$, $\|u_{0,\eps} - u_0\|_{L^1(\Omega_0)} \to 0$, $\|\fe-f\|_{L^1(Q_T)} \to 0$ (by the constructions of $u_{0,\eps}$ and $\fe$), and $\|\ue-u\|_{L^1(Q_T)} \to 0$ (due to Lemma \ref{L1convergence}), and $\|\na\ue - \na u\|_{L^1(Q_T)} \to 0$ (due the fact that $\na\ue \to \na u$ almost everywhere, and $\|\na\ue\|_{L^q(Q_T)}$ is bounded uniformly in $\eps$ for some $q>1$), we imply that the last four terms on the right hand side of \eqref{l3} become arbitrary small as $M$ tends to infinity.
	
	Let $\kappa>0$ be arbitrary. We first choose $M$ large enough such that the sum of the last four terms on the right hand side of \eqref{l3} is smaller than $\kappa/2$. Then using the boundedness of $\|\fe\|_{L^1(Q_T)}$ and $\|\ue\|_{\LW{1}{1}{1}}$, there exists $k_0$ large enough, which is independent of $\eps$, such that for all $k\geq k_0$,
	\begin{equation*}
		\frac{M^2}{k}|\Omega_0| + \frac{CM^2}{k}\left(\|\fe\|_{L^1(Q_T)} + \|\ue\|_{\LW{1}{1}{1}}\right) \leq \frac{\kappa}{2}.
	\end{equation*}
	Therefore,
	\begin{equation*}
		\sup_{\eps>0}\int_{\{|\ue|\geq k\}}|\Ge(\ue,\na\ue)|dxdt \leq \kappa \quad \text{ for all } \quad k\geq k_0,
	\end{equation*}
	which proves the claim \ref{bound_non_2}.
\end{proof}
\begin{lemma}[Strong convergence of the first order terms]\label{lemg_varepsilon}
	As $\eps \to 0$, there exists a subsequence of $\{g_\varepsilon(\ue , \nabla \ue )\}$ that converges to $g(u, \nabla u)$ almost everywhere in $Q_T$ and strongly in $L^1(Q_T)$.
	\end{lemma}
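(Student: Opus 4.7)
The plan is to prove the lemma in two steps: first pointwise almost everywhere convergence, then upgrade to $L^1(Q_T)$ via Vitali's convergence theorem.

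For the pointwise convergence, I would combine Lemma~\ref{L1convergence} with Lemma~\ref{gradients}: up to a subsequence, $\ue \to u$ and $\nabla\ue \to \nabla u$ almost everywhere in $Q_T$. Continuity of $g$ in $(\lambda,\xi)$ then yields $g(x,t,\ue,\nabla\ue) \to g(x,t,u,\nabla u)$ a.e., and at every such point the limit is finite, so $\eps |g(x,t,\ue,\nabla\ue)| \to 0$, hence
\[
\Ge(\ue,\nabla\ue) \;=\; \frac{g(\ue,\nabla\ue)}{1+\eps|g(\ue,\nabla\ue)|} \;\longrightarrow\; g(u,\nabla u) \quad \text{a.e.\ in } Q_T.
\]

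For the $L^1(Q_T)$-convergence I would invoke Vitali's theorem; since $Q_T$ has finite measure and the a.e.\ limit is already identified, it suffices to establish equi-integrability of $\{\Ge(\ue,\nabla\ue)\}_{\eps>0}$. Fix $\kappa>0$. By Lemma~\ref{lem:g}, choose $k_0$ so large that
\[
\sup_{\eps>0} \int_{\{|\ue|\ge k_0\}} |\Ge(\ue,\nabla\ue)|\,dxdt < \kappa/2.
\]
On the complementary sublevel set $\{|\ue|\le k_0\}$, the subcritical growth assumption~\eqref{G2} together with $|\Ge|\le|g|$ gives
\[
|\Ge(\ue,\nabla\ue)| \;\le\; h(k_0)\bigl(\gamma(x,t) + |\nabla\ue|^{\sigma}\bigr).
\]
For a measurable $E\subset Q_T$, the contribution of $h(k_0)\gamma$ is small when $|E|$ is small since $\gamma\in L^1$. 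For the gradient term, using $\sigma<p$ and H\"older's inequality with exponents $p/\sigma$ and $p/(p-\sigma)$,
\[
\int_{E\cap\{|\ue|\le k_0\}} |\nabla\ue|^{\sigma}\,dxdt \;\le\; |E|^{(p-\sigma)/p} \left(\int_{\{|\ue|\le k_0\}} |\nabla\ue|^{p}\,dxdt\right)^{\sigma/p}.
\]
The inner $L^p$-integral is bounded uniformly in $\eps$ by summing \eqref{est_1} over the slices $B_n$ with $n=0,\dots,k_0-1$ and using the uniform $L^1$-bound on $\nabla\ue$ from Lemma~\ref{UniformBounds}. Choosing $|E|$ small enough then controls this part by $\kappa/2$, which completes the equi-integrability argument.

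The delicate point is the equi-integrability of $|\nabla\ue|^{\sigma}$: the global bound on $\nabla\ue$ only holds in $L^{q}(Q_T)$ for $q<p-d/(d+1)$, which need not exceed $\sigma$, so a direct H\"older estimate on all of $Q_T$ fails. The key observation is that restricting to $\{|\ue|\le k_0\}$ restores a full uniform $L^p$-bound on the gradient, which is precisely the role played by Lemma~\ref{lem:g} in separating out the "bad" part of $\Ge(\ue,\nabla\ue)$. Once this is in hand, Vitali's theorem yields strong convergence in $L^1(Q_T)$ and identifies the limit as $g(u,\nabla u)$.
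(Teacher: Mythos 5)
Your argument follows the paper's proof essentially step by step: pointwise a.e.\ convergence from Lemmas~\ref{L1convergence} and \ref{gradients} together with the continuity of $g$ in $(\lambda,\xi)$, and then equi-integrability established by the same high/low split on $|\ue|$, with Lemma~\ref{lem:g} controlling the set $\{|\ue|\ge k_0\}$ and assumption \eqref{G2} plus the uniform $L^p$-bound on $\nabla\ue$ over sublevel sets (from summing \eqref{est_1}) controlling the set $\{|\ue|\le k_0\}$. Your handling of the $\gamma$-term is in fact slightly cleaner than the paper's: you invoke absolute continuity of the integral for $\gamma\in L^1(Q_T)$, whereas the paper's estimate \eqref{est_g2} applies H\"older with $\|\gamma\|_{L^{p'}(Q_T)}$, which \eqref{G2} does not actually provide.
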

\begin{proof}
	From Lemmas \ref{L1convergence} and \ref{gradients}, and the fact that $g$ is continuous with respect to the third and fourth variables, we have
	\begin{equation*}
		\Ge(\ue,\na\ue) = \frac{g(x,t,\ue,\na\ue)}{1+\eps|g(x,t,\ue,\na\ue)|} \to g(x,t,u,\na u) \; \text{ almost everywhere in } \; Q_T.
	\end{equation*}
	To show that this convergence is in fact strong in $L^1(Q_T)$-topology, it's sufficient to show that the set $\{\Ge(\ue,\na\ue)\}_{\eps>0}$ is weakly compact in $L^1(Q_T)$, or equivalently to show that
	\begin{equation}\label{weak_compact_G}
		\lim_{A\in \text{meas}( Q_T), |A|\to 0}\sup_{\eps>0}\int_{A}|\Ge(\ue,\na\ue)|dxdt = 0
	\end{equation}
	where $A\in \text{meas}(Q_T)$ means that $A\subset Q_T$ is a measurable subset of $Q_T$. Indeed, we have for any $k\in \mathbb N$,
	\begin{equation}\label{split_g}
		\int_{A}|\Ge(\ue,\na\ue)|dxdt = \int_{A\cap \{|\ue|\leq k\}}|\Ge(\ue,\na\ue)|dxdt + \int_{A\cap \{|\ue|\geq k\}}|\Ge(\ue,\na\ue)|dxdt.
	\end{equation}
	For the second part, we have
	\begin{equation}\label{est_g0}
		\int_{A\cap \{|\ue|\geq k\}}|\Ge(\ue,\na\ue)|dxdt \leq \int_{\{|\ue|\geq k\}}|\Ge(\ue,\na\ue)|dxdt
	\end{equation}	
	in which the right-hand side tends to $0$, as $k\to\infty$, uniformly in $\eps$, thanks to Lemma \ref{lem:g}. It remains to estimate the first part in \eqref{split_g}. From the assumption \eqref{G2}, we have
	\begin{equation}\label{est_g1}
	\begin{aligned}
		&\int_{A\cap \{|\ue|\leq k\}}|\Ge(\ue,\na\ue)|dxdt\\
		&\leq \int_{A\cap \{|\ue|\leq k\}}|g(\ue,\na\ue)dxdt\\
		&\leq h(k)\int_{A\cap \{|\ue|\leq k\}}\left(|\gamma(x,t)| + |\na\ue|^{\sigma}\right)dxdt\\
		&\leq h(k)\int_{A}|\gamma(x,t)|dxdt + h(k)\left(\int_{A\cap \{|\ue|\leq k\}}|\na\ue|^pdxdt\right)^{\frac{\sigma}{p}}|A|^{\frac{p-\sigma}{p}}
	\end{aligned}
	\end{equation}
	where we used H\"older's inequality and the obvious estimate $|A\cap \{\ue \leq k\}| \leq |A|$ at the last step. By using H\"older's inequality again we find
	\begin{equation}\label{est_g2}
		h(k)\int_{A}|\gamma(x,t)|dxdt \leq h(k)\|\gamma\|_{L^{p'}(Q_T)}|A|^{\frac{p'-1}{p'}}
	\end{equation}
	where we recall that $p' = \frac{p}{p-1}$. From Lemmas \ref{prove_cond2} and \ref{UniformBounds} (with $q = 1$) we can estimate
	\begin{equation}\label{est_g3}
	\begin{aligned}
		\int_{A\cap \{|\ue|\leq k\}}|\na\ue|^pdxdt &\leq \sum_{j=0}^{k}\int_{B_j}|\na\ue|^pdxdt\\
		&\leq \sum_{j=0}^{k}\left(C_0 + C_1\int_{E_j}|\na\ue|dxdt\right)\\
		&\leq \sum_{j=0}^k\left(C_0 + C_1\|\na\ue\|_{L^1(Q_T)}\right)\\
		&\leq C(k+1).
	\end{aligned}
	\end{equation}
	Inserting \eqref{est_g2} and \eqref{est_g3} into \eqref{est_g1} gives us
	\begin{equation}\label{est_g4}
		\int_{A\cap \{|\ue|\leq k \} }|\Ge(\ue,\na\ue)|dxdt \leq Ch(k)|A|^{\frac{p'-1}{p'}} + C(k+1)h(k)|A|^{\frac{p-\sigma}{p}}.
	\end{equation}
	Using \eqref{est_g0} and \eqref{est_g4} yields the desired estimate \eqref{weak_compact_G} which finishes the proof of Lemma \ref{lemg_varepsilon}.
\end{proof}
The last lemma is about the continuity in time.
\begin{lemma}\label{lem:time_continuity}
	The sequence $\{\ue\}_{\eps>0}$ is a Cauchy sequence in $C([0,T];L^1(\Omega_t))$ as $\eps\to 0$, and therefore $u\in C([0,T];L^1(\Omega_t))$.
\end{lemma}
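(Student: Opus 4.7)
The plan is to test the difference of the approximate equations \eqref{approx} for $\ue$ and $\uee$ against the function $\phi = \frac{1}{\delta}T_\delta(\ue-\uee)$ and then send $\delta\to 0$ to obtain an $L^1$-stability estimate. Set $w := \ue-\uee$. Since both $\ue,\uee$ lie in $L^p(0,T;W^{1,p}_0(\Omega_t))$ with $\pa_t\ue,\pa_t\uee \in L^{p'}(0,T;W^{-1,p'}(\Omega_t))$, so does $w$; consequently the weak chain rule \eqref{t0} derived in the proof of Lemma \ref{prove_cond1} extends, with $T_k$ replaced by $T_\delta$, to the function $S_\delta(w) := \int_0^w T_\delta(\tau)d\tau$. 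Integrating over $(0,t)$ then yields
\begin{equation*}
	\frac{1}{\delta}\int_{\Omega_t}S_\delta(w(t))dx + I_a + I_\vv + I_g = \frac{1}{\delta}\int_{\Omega_0}S_\delta(u_{0,\eps}-u_{0,\eps'})dx + I_f,
\end{equation*}
where $I_a,I_\vv,I_g,I_f$ denote respectively the diffusion, convection, nonlinearity, and source integrals (each carrying the factor $1/\delta$).

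Each of these four terms is controlled as follows. By monotonicity \eqref{A1}, $I_a \geq 0$ and may be dropped. For the convection term, spatial integration by parts (the boundary contribution vanishes because $w=0$ on $\pa\Omega_s$) combined with the identity $w\,\na T_\delta(w) = \tfrac{1}{2}\na\!\left(T_\delta(w)^2\right)$ yields
\begin{equation*}
	I_\vv = \frac{1}{2\delta}\int_0^t\!\int_{\Omega_s}T_\delta(w)^2\,\div\vv\,\dx,
\end{equation*}
so that $|I_\vv| \leq \tfrac{\delta}{2}\|\div\vv\|_\infty |Q_T| \to 0$ as $\delta\to 0$. Since $|T_\delta(w)/\delta|\leq 1$, one also has
\begin{equation*}
	|I_g| \leq \|\Ge(\ue,\na\ue)-g_{\eps'}(\uee,\na\uee)\|_{L^1(Q_T)},\qquad |I_f| \leq \|\fe-f_{\eps'}\|_{L^1(Q_T)},
\end{equation*}
both independent of $\delta$. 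Finally, since $S_\delta(z)/\delta \leq |z|$ and $S_\delta(z)/\delta \to |z|$ pointwise, dominated convergence with dominating function $|w(t)|\in L^1(\Omega_t)$ yields $\frac{1}{\delta}\int_{\Omega_t}S_\delta(w(t))dx \to \|w(t)\|_{L^1(\Omega_t)}$, while the initial contribution is bounded by $\|u_{0,\eps}-u_{0,\eps'}\|_{L^1(\Omega_0)}$ uniformly in $\delta$.

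Combining these estimates and taking the supremum over $t\in[0,T]$ gives
\begin{equation*}
	\sup_{t\in[0,T]}\|\ue(t)-\uee(t)\|_{L^1(\Omega_t)} \leq \|u_{0,\eps}-u_{0,\eps'}\|_{L^1(\Omega_0)} + \|\fe-f_{\eps'}\|_{L^1(Q_T)} + \|\Ge(\ue,\na\ue)-g_{\eps'}(\uee,\na\uee)\|_{L^1(Q_T)}.
\end{equation*}
Each term on the right vanishes as $\eps,\eps'\to 0$: the first two by construction \eqref{u0f}, and the third because Lemma \ref{lemg_varepsilon} establishes the strong $L^1(Q_T)$-convergence of $\Ge(\ue,\na\ue)$, which in particular yields the Cauchy property. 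Thus $\{\ue\}$ is Cauchy in $C([0,T];L^1(\Omega_t))$; its limit is forced to agree with $u$ (already identified in $L^1(Q_T)$ by Lemma \ref{L1convergence}), and therefore $u\in C([0,T];L^1(\Omega_t))$. I expect the main technical point to be the control of $I_g$: a direct estimate only provides the uniform bound $\|\Ge(\ue,\na\ue)\|_{L^1(Q_T)}+\|g_{\eps'}(\uee,\na\uee)\|_{L^1(Q_T)} \leq C$, and the smallness needed to close the Cauchy argument is supplied precisely by the $L^1$-convergence in Lemma \ref{lemg_varepsilon}.
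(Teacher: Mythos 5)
Your proof is correct and takes a genuinely different route from the paper. The paper tests with the fixed truncation $T_1(\ue-\uee)$, bounds the resulting $\int_{\Omega_t}S_1(\ue-\uee)(t)\,dx$ by a quantity $m_{\eps,\eps'}$ that includes $\|\vv\|_\infty\|\na(\ue-\uee)\|_{L^1(Q_T)}$ and $\|\div\vv\|_\infty\|\ue-\uee\|_{L^1(Q_T)}$, and then recovers the $L^1$ norm of $\ue(t)-\uee(t)$ by a Cauchy--Schwarz step split over $\{|w|\leq 1\}$ and $\{|w|>1\}$, ending with terms of size $\sqrt{m_{\eps,\eps'}}$ and $m_{\eps,\eps'}$. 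Your $T_\delta/\delta$ variant recovers $\|w(t)\|_{L^1(\Omega_t)}$ directly in the $\delta\to 0$ limit (by monotone/dominated convergence, using $S_\delta(z)/\delta\uparrow|z|$), avoiding the square-root loss entirely. More notably, your integration-by-parts identity $w\,\na T_\delta(w)=\tfrac12\na(T_\delta(w)^2)$ makes the convection contribution of order $O(\delta)$ and hence vanish, so you do not need the strong $L^1(Q_T)$ convergence of $\na\ue$ (which the paper invokes implicitly via Lemma~\ref{gradients} and Vitali to make $\|\na(\ue-\uee)\|_{L^1(Q_T)}\to 0$); you only rely on \eqref{u0f} and the $L^1$ convergence of the $\Ge$-terms from Lemma~\ref{lemg_varepsilon}. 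Both routes extend the weak chain rule \eqref{t0} to $w=\ue-\uee$ and both drop the diffusion term by monotonicity \eqref{A1}, so the structural skeleton is shared, but your limiting argument is cleaner and has strictly fewer external dependencies.
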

\begin{proof}
	Let $\eps, \eps'>0$, subtracting the equations for $\ue$ and $\uee$ and taking $T_1(\ue - \uee)$ as the test function, we have
	\begin{equation*}
	\begin{aligned}
		&\int_{\Omega_t}S_1(\ue - \uee)(t)dx + \int_0^t\int_{\Omega_s}(a(x,s,\na\ue) - a(x,s,\na\uee))(\na\ue - \na\uee)\chi_{\{|\ue - \uee|\leq 1\}}dxds\\
		&\leq \int_{\Omega_0}S_1(u_{0,\eps} - u_{0,\eps'})dx - \int_0^t\int_{\Omega_s}(\vv\cdot \na(\ue - \uee) + (\ue - \uee)\div\vv)T_1(\ue - \uee)dxds\\
		&-\int_0^t\int_{\Omega_s}(\Ge(\ue,\na\ue) - g_{\eps'}(\uee,\na\uee))T_1(\ue-\uee)dxds + \int_0^t\int_{\Omega_s}(\fe - f_{\eps'})T_1(\ue - \uee)dxds.
	\end{aligned}
	\end{equation*}
	Using the assumption \eqref{A1} and $|T_1(z)| \leq 1$ and $S_1(z) \leq |z|$, we obtain
	\begin{equation*}
	\begin{aligned}
		&\sup_{t\in(0,T)}\int_{\Omega_t}S_1(\ue - \uee)(t)dx\\ &\leq m_{\eps,\eps'}:= \|u_{0,\eps}-u_{0,\eps'}\|_{L^1(\Omega_0)} + \|\vv\|_{\infty}\|\na \ue - \na \uee\|_{L^1(Q_T)} + \|\div\vv\|_{\infty}\|\ue-\uee\|_{L^1(Q_T)}\\
		&+\|\Ge(\ue,\na\ue) - g_{\eps'}(\uee,\na\uee)\|_{L^1(Q_T)} + \|\fe-f_{\eps'}\|_{L^1(Q_T)}
	\end{aligned}
	\end{equation*}
	where clearly $\lim_{\eps,\eps'\to 0}m_{\eps,\eps'} = 0$. Now by using $|z|\chi_{\{|z|>1\}}/2 \leq S_1(z)\chi_{\{|z|>1\}}$ and $|z|^2\chi_{\{|z|\leq 1\}}/2 \leq S_1(z)\chi_{\{|z| \leq 1\}}$, we can estimate
	\begin{equation*}
	\begin{aligned}
		\|\ue(t) - \uee(t)\|_{L^1(\Omega_t)} &\leq \int_{\{|\ue(t) - \uee(t)| \leq 1\}}|\ue(t) - \uee(t)|dx + \int_{\{|\ue(t) - \uee(t)| > 1\}}|\ue(t) - \uee(t)|dx\\
		&\leq |\Omega_t|^{1/2}\left(\int_{\{|\ue(t) - \uee(t)| \leq 1\}}|\ue(t) - \uee(t)|^2dx\right)^{1/2}+ 2\int_{\Omega_t}S_1(\ue - \uee)(t)dx\\
		&\leq |\Omega_t|^{1/2}\left(2\int_{\Omega_t}S_1(\ue - \uee)(t)dx\right)^{1/2} + 2\int_{\Omega_t}S_1(\ue - \uee)(t)dx\\
		&\leq \sqrt 2 |\Omega_t|^{1/2}\sqrt{m_{\eps,\eps'}} + 2m_{\eps,\eps'}.
	\end{aligned}
	\end{equation*}
	Hence,
	\begin{equation*}
		\lim_{\eps,\eps'\to 0}\sup_{t\in(0,T)}\|\ue(t) - \uee(t)\|_{L^1(\Omega_t)} = 0.
	\end{equation*}
	Therefore, $\{\ue\}_{\eps>0}$ is a Cauchy sequence in $C([0,T];L^1(\Omega_t))$, and thus $u\in C([0,T];L^1(\Omega_t))$.
\end{proof}
We are now ready to prove the main theorem of this paper.
\begin{proof}[Proof of Theorem \ref{thm:main}]
	Let $\phi\in C([0,T];W^{1,\infty}_0(\Omega_t))\cap C^1((0,T);L^{\infty}(\Omega_t))$ be the test function to the approximate problem. We have
	\begin{align*}
		\int_{\Omega_T}\ue(T)\phi(T) dx - \int_{Q_T}\ue \partial_t \phi dxdt + \int_{Q_T}a(x,t,\na\ue)\cdot\na\phi dxdt\\
		- \int_{Q_T}\ue \vv\cdot \na\phi dxdt + \int_{Q_T}\Ge(\ue,\na\ue)\phi dxdt\\
		= \int_{\Omega_0}u_{0,\eps}\phi(0)dx + \int_{Q_T}\fe \phi dxdt.
	\end{align*}
	By applying Lemmas \ref{L1convergence}, \ref{convergence_a}, \ref{lemg_varepsilon}, and \ref{lem:time_continuity}, and using \eqref{u0f}, we can pass to the limit as $\eps\to 0$ in all the terms to obtain that
	\begin{align*}
		\int_{\Omega_T}u(T)\phi(T) dx - \int_{Q_T}u \partial_t \phi dxdt + \int_{Q_T}a(x,t,\na u)\cdot\na\phi dxdt\\
		- \int_{Q_T}u \vv\cdot \na\phi dxdt + \int_{Q_T}g(u,\na u)\phi dxdt\\
		= \int_{\Omega_0}u_{0}\phi(0)dx + \int_{Q_T}f \phi dxdt
	\end{align*}
	or in other words, $u$ is a weak solution to \eqref{e1} on $(0,T)$. The proof of Theorem \ref{thm:main} is complete.
\end{proof}
\appendix
\section{Existence of approximate solutions}\label{appendix1}
This section is devoted to a proof of the global existence of a weak solution to the approximate system \eqref{approx}. We follow the ideas in \cite{CNO17}. 

We divide the time interval $[0;T]$ into $N\in \mathbb{N}$ smaller intervals $(t_j,t_{j+1})$ for $j=0,\ldots, N-1$ and define $\Delta:= \max_j|t_j-t_{j+1}|$. The points $t_j$ are chosen so that
\begin{enumerate}
	\item $\cup_{j=0,N-1}\Omega_{t_j}\times[t_j,t_{j+1}) \subset \widehat{\O}$,
	\item $\Omega_{t_j}$ has smooth boundary for all $j\in \{0,\cdots,N-1\}$,
	\item \eqref{A0}--\eqref{A1} hold for a.e. $x\in\Omega_{t_j}$ and for all $\xi\in\mathbb R^d$,
	\item $t_j$ are Lebesgue points of the map $[0,T]\ni t\mapsto a(\cdot,t,\cdot)\in L^1(\widehat{\Omega} \times B(0,R))$ for any $R>0$, where $B(0,R)\subset \R^d$ is the open ball centered at zero with radius $R$,
	\item $\Delta \to 0$ as $N\to\infty$.
\end{enumerate}
\newcommand{\wh}{\widehat}
%\bao{The characteristic function $\chi_{\Omega_{t_j}}$ should be defined before it is used}.
We define the extended function $\wh{f}_\eps:\wh{Q} \to \R$ as $\wh{f}_\eps(x,t) = \fe(x,t)$ if $(x,t)\in \wh{Q}$ and $\wh{f}_\eps(x,t) = 0$ otherwise. Let us denote by $I_j=[t_j,t_{j+1})$. For each $j\in \{0,\ldots, N-1\}$ we consider the following equation
\begin{equation}\label{timeapprox}
\begin{cases}
\pa_t w^{(j)} - \div(a(x,t_j,\na w^{(j)})) + \div(w^{(j)}\vv) +   \Ge(w^{(j)}, \na w^{(j)}) = \wh{f}_\eps, &x\in \Omega_{t_j}, \; t\in I_j,\\
w^{(j)}(x,t) = 0, &x\in \pa\Omega_{t_j}, t\in I_j,\\
w^{(j)}(x,t_j) =\begin{cases} \lim_{t\to t_j^{-}}w^{(j-1)}(\zeta_{t_j - t_{j-1}}^{-1}(x), t), &x\in \Omega_{t_j}\cap \Omega_{t_{j-1}},\\
0&x\in \O_{t_j}\setminus \O_{t_{j-1}}.\end{cases}
\end{cases}
\end{equation}
If $t_0=0$ then we let $w^{(0)}(x,0)=u_{0,\varepsilon}(x)$. Note that we have the semigroup property $\zeta_{t+s} = \zeta_t\circ \zeta_s$ and the domains $\Omega_{t_j} = \zeta_{t_j - t_{j-1}}^{-1}(\Omega_{t_{j-1}})$ for $j=\overline{0,N-1}$.

\medskip
For any fixed $j\in \{0,\cdots,N-1\}$,  by classical results, see e.g. \cite{Lio69}, one obtains the existence of a solution $w^{(j)}\in L^1(\O_{t_j}\times I_j)\cap L^p(I_j;W^{1,p}_0(\O_{t_j}))$ with $\pa_tw^{(j)}\in L^{p'}(I_j;W^{-1,p'}(\O_{t_j}))$ of \eqref{timeapprox}. Moreover, $w^{(j)}\in C(I_j; L^1(\Omega_{t_j}))$. Denote by
$$ \Omega^\Delta:=\{(x,t): x\in\O_{t_j}, t\in I_j, j=0,\cdots,N-1\}=\bigcup\limits_{j=0,\cdots,N-1}\O_{t_j}\times I_j. $$
From \cite[Lemma 3.4]{CNO17}, we know that as $\Delta\to 0$, $\Omega^\Delta$ converges to $Q_T$ in Hausdorff sense, and as a consequence $\chi_{\Omega^\Delta}$ converges strongly to $\chi_{Q_T}$ in $L^s(\wh{Q})$ for all $s<\infty$. We now glue the solutions $w^{(j)}(x,t)$ of \eqref{timeapprox} together and define the approximate solutions 
\begin{equation*}\label{appsol}
w^{\Delta}: \wh{Q}\to \mathbb R\quad \text{with} \quad
w^{\Delta}(x,t) = \sum_{j=0}^{N-1}\chi_{I_j}(t)\chi_{\Omega_{t_j}}(x)w^{(j)}(x,t)
\end{equation*}
for $(x,t)\in \wh{Q}$. The function $w^{(j)}(x,t)\chi_{\O_{t_j}}(x)$ in the formulae above is the function which coincides with $w^{(j)}(x,t)$ in $\O_{t_j}$ and is equal to zero outside $\O_{t_j}$, that means $w^{\Delta}\equiv 0$ in $\wh{Q}\backslash \Omega^{\Delta}$.

In the sequel, we prove some {\it a priori} estimates of $w^\Delta$ which are independent of $\Delta$, thus allowing us to pass to the limit $\Delta \to 0$. In conclusion we have $w^\Delta \to v$ where $v$ is a solution to \eqref{approx}.
We are ready to give a proof of Theorem \ref{thm:approximate}.
\begin{proof}[Proof of Theorem \ref{thm:approximate}]
	The proof follows the ideas of \cite{CNO17}, so we only sketch some main steps here. For simplicity we set
$$ G_\varepsilon(u,\nabla u)=\di(u\vv)+g_\varepsilon(u,\nabla u). $$
	{\bf Step 1: Establishing a priori estimates of $w^{\Delta}$.} 
	
	First, we will prove $w^{\Delta}\in L^\infty(\O^\Delta)$ for any $t>0$. It is enough to prove the estimate in $\O_0\times (0, t_1)$.
	
Let $k \geq 2$ be arbitrary. By choosing $|w^{\Delta}|^{k-2}w^{\Delta}$ as a test function of \eqref{approx}, we have
	\begin{equation}\label{S.1}
	\begin{aligned}
	\dfrac{d}{dt}\int_{\Omega_0}|w^{\Delta}|^kdx&+k(k-1)\int_{\Omega_0}a(x,0,\nabla w^{\Delta})\cdot\nabla w^{\Delta}|w^{\Delta}|^{k-2}dx\\
	&=-k\int_{\Omega_0}G_\varepsilon(w^\Delta,\nabla w^\Delta)|w^\Delta|^{k-2}w^\Delta dx+k\int_{\Omega_0}f_\varepsilon|w^\Delta|^{k-2}w^\Delta dx.
	\end{aligned}
	\end{equation}
	From \eqref{A3}, equation \eqref{S.1} becomes
	\begin{equation*}\label{S.2}
	\begin{aligned}
	\dfrac{d}{dt}\int_{\Omega_0}|w^\Delta|^kdx&+k(k-1)\alpha\left(\dfrac{p}{p+k-2}\right)^p\int_{\Omega_0}|\nabla (w^\Delta)^{\frac{k+p-2}{p}}|^pdx\\
	&\leq k\int_{\Omega_0}|G_\varepsilon(w^\Delta,\nabla w^\Delta)||w^\Delta|^{k-1}dx+k\int_{\Omega_0}|f_\varepsilon||w^\Delta|^{k-1}dx.
	\end{aligned}
	\end{equation*}
	By integrating the inequality above from 0 to $t_1$ we have 
	$$\int_{\Omega_0}|w^\Delta(t)|^{k}dx\leq \int_{\Omega_0}|u_{0,\varepsilon}|^{k}dx+k\int_0^{t_1}\int_{\O_0}(|G_\varepsilon|+|f_\varepsilon|)|w^\Delta|^{k-1}dxdt.$$
	Fix $\xi>t_1$. By using H\"{o}lder and Young inequalities, we have
	\begin{align*}
	(1-&t_1\xi^{-\frac{k}{k-1}})\sup\limits_{t\in(0,t_1)}\int_{\O_0}|w^\Delta(t)|^kdx+\xi^{-\frac{k}{k-1}}\int_0^{t_1}\int_{\O_0}|w^\Delta(t)|^kdxdt\\
	&\leq\sup\limits_{t\in(0,t_1)}\int_{\O_0}|w^\Delta(t)|^kdx\leq \int_{\Omega_0}|u_{0,\varepsilon}|^{k}dx+k\int_0^{t_1}\int_{\O_0}(|G_\varepsilon|+|f_\varepsilon|)|w^\Delta|^{k-1}dxdt.\\
	&\leq\int_{\O_0}|u_{0,\varepsilon}|^kdx+\xi^k\int_0^{t_1}\int_{\O_0}(|G_\varepsilon|+|f_\varepsilon|)^kdxdt+\xi^{-\frac{k}{k-1}}\int_0^{t_1}\int_{\O_0}|w^\Delta(t)|^kdxdt.
	\end{align*}
	Hence
	{\small$$ \left(1-t_1\xi^{-\frac{k}{k-1}}\right)^{1/k}\sup\limits_{t\in(0,t_1)}\left(\int\limits_{\O_0}|w^\Delta(t)|^kdx\right)^{1/k}\leq \left(\int\limits_{\O_0}|u_{0,\varepsilon}|^kdx+\xi^k\int\limits_0^{t_1}\int\limits_{\O_0}(|G_\varepsilon|+|f_\varepsilon|)^kdxdt\right)^{1/k}. $$}
	Letting $k\to\infty$, we obtain
	\begin{equation*}\label{S6}
	\|w^\Delta(t)\|_{L^\infty(\O_0)}\leq \|u_{0,\varepsilon}\|_{L^\infty(\O_0)}+\xi(\|G_\varepsilon\|_{L^\infty(0,t_1;\O_0)}+\|f_\varepsilon\|_{L^\infty(0,t_1;\O_0)}),\,\forall \xi>t_1.
	\end{equation*}
	
	Second, by using the same arguments in \cite[Lemma 3.6 and 3.9]{CNO17}, we obtain two results respectively, for precisely there is some constant $C>0$ depending only on $Q_T$ such that
	\begin{equation*}\label{S7}
	\sum\limits_{j=0}^{N-1}\int_{t_j}^{t_{j+1}}\int_{\Omega_{t_j}}|\nabla w^\Delta|^p dx\leq C,
	\end{equation*}
	and let $0<j\leq N$ be fixed, then 
	\begin{equation}\label{S10}
	w^{(j)}_t\chi_{\O_{t_j}}\in L^{p'}(I_j; W^{-1,p'}(\O_{t_j})).
	\end{equation} 
	
	{\bf Step 2: Passing to the limits.} From the above estimates, and recalling that $w^{\Delta} \equiv 0$ in $\wh{Q}\backslash \Omega^{\Delta}$, we will show that there exists a subsequence of $\{w^\Delta\}_N$, also denoted by $\{w^\Delta\}_N$, such that
	\begin{itemize}
		\item[(i)] $w^\Delta\rightharpoonup v$ in $L^p(0,T;W^{1,p}(\wh{\O}))\cap L^\infty(\wh{Q})$,
		\item[(ii)] $a(x,t,\nabla w^{\Delta})\rightharpoonup a(x,t,\na v)$ in $L^{p'}(\wh{Q})$,
		\item[(iii)] $G_\varepsilon(w^\Delta,\nabla w^\Delta)\rightharpoonup G_\eps(v,\na v)$ in $L^{p'}(\wh{Q})$,
	\end{itemize}
	where $ a(x,t,\nabla w^{\Delta}):=\sum\limits_{j=0}^{N-1}\chi_{[t_j,t_{j+1})}(t)a(x,t_j,\nabla w^{(j)})\chi_{\O_{t_j}}(x)$. The limit (i) is straightforward. The limits (ii) and (iii) are proved in the following.
	\begin{itemize}
		\item \underline{Proof of the limit (ii)}. We only give the main ideas while refer the reader to \cite[Lemma 3.10 and 4.7]{CNO17} for more details.

		At first, we show that $\{w^{\Delta}|_{C}\}_{\Delta>0}$ is precompact in $L^1(C)$ where $C = (s_1,s_2)\times K$ is an open cylinder contained in $Q_T$ with $0\leq s_1<s_2 \leq T$ such that $\overline{C}\subset Q_T$. Since $\Omega^\Delta \to Q_T$, we can choose $N$ large enough such that $\overline{C}\subset \Omega^\Delta$. Moreover, if $(s_1,s_2)\cap I_j\ne \emptyset$ then $\overline{K}\subset\Omega_{t_j}$. Therefore,
		\begin{equation*}
			w^{\Delta}|_C(x,t) = \sum_{j=0}^{N-1}(\chi_{I_j}(t)\chi_{\Omega_{t_j}}(x)w^{(j)}(x,t))|_{C}.
		\end{equation*}
		It follows that $\{w^{\Delta}|_C\}$ is bounded in $L^p(s_1,s_2;W^{1,p}(K))$. We will now show
		\begin{equation}\label{limit}
		 \int_{s_1}^{s_2-h}\|w^\Delta(t+h)-w^\Delta(t)\|_{W^{-1,p'}(K)}dt\to 0,\quad\text{as }h\to 0^+ 	
		\end{equation}
		uniformly in $N$. For $z\in W^{1,p}_0(K)$ we can use zero extension to have $z\in W^{1,p}_0(\Omega_{t_j})$ and $\|z\|_{W^{1,p}_0(K)} = \|z\|_{W^{1,p}_0(\Omega_{t_j})}$ thanks to $\overline{K}\subset \Omega_{t_j}$. Thus we can estimate for $t\in I_j^h\cap (s_1,s_2-h)$ with $I_j^h:= [t_j,t_{j+1}-h)$
		\begin{equation}\label{t3}
		\begin{aligned}
			&\int_{I_j^h\cap (s_1,s_2-h)}\left|\int_{K}(w^{\Delta}(t+h) - w^{\Delta}(t))\cdot zdx\right|dt\\
			 &\leq\int_{I_j^h\cap(s_1,s_2-h)}\int_{t}^{t+h}\int_{K}|w^{(j)}_t\cdot z| dxdsdt\\
			&\leq h^{1/p}|I_j|\|w_t^{(j)}\|_{L^{p'}(I_j;W^{-1,p'}(\Omega_{t_j}))}\|z\|_{W^{1,p}_0(K)}.
		\end{aligned}
		\end{equation}
		For $t\in (I_j\backslash I_j^h)\cap (s_1,s_2-h)$,
		\begin{equation}\label{t4}
		\begin{aligned}
		\qquad&\int_{(I_j\backslash I_j^h)\cap (s_1,s_2-h)}\left|\int_{K}(w^{\Delta}(t+h) - w^{\Delta}(t))\cdot zdx\right|dt\\
		&\leq\int\limits_{(I_j\backslash I_j^h)\cap(s_1,s_2-h)}\left(\|w^{(j+1)}(t+h)\|_{W^{-1,p'}(\Omega_{t_{j+1}})} + \|w^{(j)}(t)\|_{W^{-1,p'}(\Omega_{t_j})} \right)\|z\|_{W^{1,p}_0(K)}dt\\
		&\leq h\left(\|w^{(j+1)}\|_{C(I_j;W^{-1,p'}(\Omega_{t_{j+1}}))}+\|w^{(j)}\|_{C(I_j;W^{-1,p'}(\Omega_{t_j}))}\right)\|z\|_{W^{1,p}_0(K)}.
		\end{aligned}	
		\end{equation}
		From \eqref{t3} and \eqref{t4} we can write
		\begin{gather*}
			\int_{s_1}^{s_2-h}\|w^{\Delta}(t+h)-w^{\Delta}(t)\|_{W^{-1,p'}(K)}dt\\ = \sum_{j=0}^{N-1}\int\limits_{I_j\cap(s_1,s_2-h)}\|w^{\Delta}(t+h)-w^{\Delta}(t)\|_{W^{-1,p'}(K)}dt
		\end{gather*}	
		to obtain the desired limit \eqref{limit}.
%		At first, we show that $\{w^{\Delta}|_{C}\}_{\Delta>0}$ is precompact in $L^1(C)$ where $C = (s_1,s_2)\times K$ with $0\leq s_1 < s_2 \leq T$ and $K\subset\subset \wh{\Omega}$ is open. In order to do that, we show
%\begin{equation}\label{S+}
% \int_{s_1}^{s_2-h}\|w^\Delta(t+h)-w^\Delta(t)\|_{W^{-1,p'}(K)}dt\to 0,\quad\text{as }h\to 0^+ 
% \end{equation}
%uniformly in $N$. Indeed, for any $z\in W^{1,p}(K)$ be given, we have that
%\begin{align*}
%\quad \int_K(w^\Delta(t+h)-w^\Delta(t))\cdot zdx&=\int_t^{t+h}\int_Kw^\Delta_t\cdot zdxds\\
%&=\int_t^{t+h}\int_K\sum\limits_{j=0}^{N-1}\chi_{I_j}(t)w^{(j)}_t(x,s)\chi_{\O_{t_j}}(x)\cdot zdxds\\
%&\leq \sum\limits_{j=0}^{N-1}Ch^{1/p}\|w^{(j)}_t\chi_{\O_{t_j}}\|_{L^{p'}(t_j,t_{j+1};W^{-1,p'}(K))}\|z\|_{W^{1,p}(K)}.
%\end{align*}
%Integrating the inequality above with respect to $t$ over $(s_1,s_2-h)$, we deduce \eqref{S+} from \eqref{S10}. 
By using \cite[Theorem 5]{Si86}, $\{w^\Delta|_{C}\}_{\Delta >0}$ is relatively
compact in $L^1(C)$. For any compact subset of  $Q_T$, it can be covered by a finite number of open cylinders, then by applying a diagonal procedure, we deduce that the sequence $\{w^\Delta\}_{\Delta>0}$ is relatively compact in $L^1_{\text{loc}}(Q_T)$. Together with the uniform bound of $w^\Delta$ in $L^\infty(\O^\Delta)$ and $w^{\Delta}\equiv 0$ in $\wh{Q}\backslash \Omega^{\Delta}$, we obtain the following lemma.
	\begin{lemma}\label{strgcon}
		There exists a subsequence of $\{w^\Delta\}_{\Delta>0}$ which converges strongly in $L^1(\wh{Q})$.
	\end{lemma}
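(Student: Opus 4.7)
The plan is to upgrade the already-established relative compactness of $\{w^\Delta\}_{\Delta>0}$ in $L^1_{\mathrm{loc}}(Q_T)$ to strong compactness in $L^1(\wh Q)$ by exploiting two additional ingredients from the construction: the uniform $L^\infty$-bound of $w^\Delta$ on $\Omega^\Delta$ together with the fact that $w^\Delta\equiv 0$ on $\wh Q\setminus\Omega^\Delta$, and the convergence $\chi_{\Omega^\Delta}\to\chi_{Q_T}$ in $L^s(\wh Q)$ for every $s<\infty$ (equivalently, $|\Omega^\Delta\triangle Q_T|\to 0$).

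First, I would extract via a standard diagonal argument a subsequence, still denoted $\{w^\Delta\}_{\Delta>0}$, and a function $v$ defined on $Q_T$ such that $w^\Delta\to v$ in $L^1(K)$ (and a.e.\ along a further subsequence) for every compact set $K\subset Q_T$. Extending $v$ by zero on $\wh Q\setminus Q_T$, the uniform $L^\infty$-bound from Step 1 of the proof of Theorem \ref{thm:approximate} combined with the a.e.\ convergence forces $v\in L^\infty(\wh Q)$ with $\|v\|_{L^\infty(\wh Q)}\le \limsup_\Delta \|w^\Delta\|_{L^\infty(\Omega^\Delta)}=: M<\infty$.

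Next, fix $\kappa>0$ and choose a compact set $K\subset Q_T$ with $|Q_T\setminus K|<\kappa$. Because $\Omega^\Delta\to Q_T$ in the Hausdorff sense, for all $N$ sufficiently large we have $K\subset \Omega^\Delta$, hence $|\Omega^\Delta\setminus K|=|\Omega^\Delta|-|K|\to |Q_T\setminus K|<\kappa$ as $\Delta\to 0$. Splitting
\[
\|w^\Delta-v\|_{L^1(\wh Q)}=\|w^\Delta-v\|_{L^1(K)}+\|w^\Delta-v\|_{L^1(\wh Q\setminus K)},
\]
the first summand tends to $0$ by the $L^1_{\mathrm{loc}}(Q_T)$ convergence. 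For the second summand I would use $w^\Delta\equiv 0$ on $\wh Q\setminus\Omega^\Delta$ and $v\equiv 0$ on $\wh Q\setminus Q_T$ to bound
\[
\|w^\Delta-v\|_{L^1(\wh Q\setminus K)}\le M\,|\Omega^\Delta\setminus K|+M\,|Q_T\setminus K|\le 3M\kappa
\]
for $N$ large enough. Passing to the limit $\Delta\to 0$ and then letting $\kappa\to 0$ yields $\|w^\Delta-v\|_{L^1(\wh Q)}\to 0$ along the extracted subsequence.

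The only nontrivial step is the control of the boundary layer $\wh Q\setminus K$: the local compactness by itself gives no information there, and the argument hinges entirely on the joint use of the uniform $L^\infty$ bound of $w^\Delta$ and the measure-convergence $|\Omega^\Delta\triangle Q_T|\to 0$ inherited from the Hausdorff convergence $\Omega^\Delta\to Q_T$ established in \cite[Lemma 3.4]{CNO17}. Once these two facts are in hand, the argument above is essentially a standard diagonal extraction combined with an $\varepsilon/3$-style splitting, and no further estimate on the time derivative or gradient of $w^\Delta$ is needed beyond what has already been used to produce the $L^1_{\mathrm{loc}}(Q_T)$ compactness.
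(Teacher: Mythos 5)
Your proposal is correct and takes essentially the same route as the paper: the paper states Lemma \ref{strgcon} as a direct consequence of the $L^1_{\mathrm{loc}}(Q_T)$ relative compactness, the uniform $L^\infty(\Omega^\Delta)$ bound from Step 1, and the vanishing of $w^\Delta$ outside $\Omega^\Delta$, and you have simply written out the $\kappa$-splitting (compact exhaustion plus $|\Omega^\Delta\triangle Q_T|\to 0$) that the paper leaves implicit. No new idea is introduced and no step is missing.
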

It's clear that $a(x,t, \na w^{\Delta}) \rightharpoonup \overline{a}$ in $L^{p'}(\wh{Q})$ for some $\overline{a}$. Moreover, we have the following result.
	\begin{lemma}
		Let $\phi$ be smooth and such that $\text{supp}\phi\subset \O^\Delta\cap ([0,T]\times \mathbb R^d)$. Then 
		\begin{equation*}\label{S0}
		\underset{N\to\infty}{\lim\sup}\int_0^T\int_{\O^\Delta_t} a(x,t,\nabla w^\Delta)\cdot \nabla w^\Delta\phi dxdt\leq \int_0^T\int_{\O_t}\overline{a}\cdot\nabla w\phi dxdt
		\end{equation*}
		where $\Omega_t^{\Delta} = \Omega_{t_j}$ if $t\in [t_j,t_{j+1})$, $j=0,\ldots, N-1$.
	\end{lemma}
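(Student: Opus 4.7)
My plan is to apply Minty's trick adapted to the time-discretized and moving-domain setting. The idea is to use $w^{\Delta}\phi$ itself as a test function in \eqref{timeapprox} (summing over $j=0,\ldots,N-1$), rewrite the resulting identity so that the quantity $\int\!\!\int a(x,t,\nabla w^{\Delta})\cdot\nabla w^{\Delta}\phi\,dxdt$ is isolated on one side, and then pass to the limit $N\to\infty$ using the weak convergences $(i)$--$(iii)$ together with the strong convergence of $w^{\Delta}\to v$ in $L^{1}(\widehat{Q})$ (Lemma~\ref{strgcon}), which upgrades to strong convergence in any $L^{s}(\widehat{Q})$, $1\leq s<\infty$, thanks to the uniform $L^\infty(\Omega^\Delta)$ bound and interpolation. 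For $\Delta$ small enough $\mathrm{supp}\,\phi\subset\Omega^\Delta$, so that all spatial boundary terms vanish under integration by parts, and $\phi(0)=\phi(T)=0$ by the assumption on its support.

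Testing \eqref{timeapprox} with $w^{(j)}\phi$ on $\Omega_{t_j}\times I_j$ and summing in $j$ gives, after integrating by parts in space,
\begin{equation*}
\sum_{j=0}^{N-1}\int_{I_j}\!\!\int_{\Omega_{t_j}}\pa_t w^{(j)} w^{(j)}\phi\,dxdt
+\!\int_{\Omega^\Delta}\!\!a(x,t,\nabla w^{\Delta})\cdot\nabla w^{\Delta}\phi\,dxdt
=-\!\int_{\Omega^\Delta}\!\!a(x,t,\nabla w^{\Delta})w^{\Delta}\cdot\nabla\phi\,dxdt-\!\int_{\Omega^\Delta}\!\!G_\eps w^{\Delta}\phi\,dxdt+\!\int_{\Omega^\Delta}\!\!\wh f_\eps w^{\Delta}\phi\,dxdt.
\end{equation*}
The first term is the delicate one: within each $I_j$ the domain $\Omega_{t_j}$ is fixed, so
\begin{equation*}
\int_{I_j}\!\!\int_{\Omega_{t_j}}\pa_t w^{(j)}w^{(j)}\phi\,dxdt
=\tfrac12\int_{\Omega_{t_j}}(w^{(j)})^2\phi\,dx\Big|_{t_j}^{t_{j+1}^-}-\tfrac12\int_{I_j}\!\!\int_{\Omega_{t_j}}(w^{(j)})^2\pa_t\phi\,dxdt,
\end{equation*}
and summing over $j$ produces telescoping boundary contributions at the interfaces $t=t_j$. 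Using the initial condition in \eqref{timeapprox} and the change of variables $x=\zeta_{t_j-t_{j-1}}(y)$, each jump reads
\begin{equation*}
\tfrac12\!\int_{\Omega_{t_j}}(w^{(j)}(t_j))^2\phi(t_j)\,dx-\tfrac12\!\int_{\Omega_{t_{j-1}}}(w^{(j-1)}(t_j^-))^2\phi(t_j)\,dx
=\tfrac12\!\int_{\Omega_{t_{j-1}}}\!(w^{(j-1)}(t_j^-))^2\big[\phi(\zeta(y),t_j)|\det D\zeta|-\phi(y,t_j)\big]dy-\mathcal R_j,
\end{equation*}
where $\mathcal R_j\geq 0$ collects the mass lost on $\Omega_{t_{j-1}}\setminus\zeta^{-1}_{t_j-t_{j-1}}(\Omega_{t_j})$ (if any). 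A Taylor expansion in $\Delta$ turns the bracket into $\mathrm{div}(\phi\vv)\,(t_j-t_{j-1})+o(\Delta)$, so summing the jumps in $j$ converges, as $N\to\infty$, to $\tfrac12\int_{Q_T}v^2\,\mathrm{div}(\phi\vv)\,dxdt$, while $-\sum_j\mathcal R_j\leq 0$.

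Putting these ingredients together, the $\limsup$ of the left-hand side of the identity satisfies
\begin{equation*}
\limsup_{N\to\infty}\int_{\Omega^\Delta}a(x,t,\nabla w^{\Delta})\cdot\nabla w^{\Delta}\phi\,dxdt
\leq\tfrac12\!\int_{Q_T}\!v^2\pa_t\phi\,dxdt-\tfrac12\!\int_{Q_T}\!v^2\mathrm{div}(\phi\vv)\,dxdt-\!\int_{Q_T}\!\overline a\,v\cdot\nabla\phi\,dxdt-\!\int_{Q_T}\!G_\eps(v,\nabla v)v\phi\,dxdt+\!\int_{Q_T}\!\wh f_\eps v\phi\,dxdt,
\end{equation*}
where the passage to the limit in the right-hand side of the identity uses $(ii)$, $(iii)$ and the strong $L^{s}$ convergence $w^{\Delta}\to v$ (times weak convergence of the other factor in each integral). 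Finally, the limit $v$ satisfies the distributional equation $\pa_t v-\mathrm{div}\,\overline a+\mathrm{div}(v\vv)+g_\eps(v,\nabla v)=\wh f_\eps$, and testing this equation against $v\phi$ (with the same time-integration-by-parts, which now has no jump terms) produces exactly the same right-hand side but with $\int_{Q_T}\overline a\cdot\nabla v\,\phi\,dxdt$ in place of the $\limsup$. Comparing the two identities yields the desired inequality.

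The hard part will be twofold: first, carefully tracking the jump contributions across the interfaces $t=t_j$ and showing they contribute a non-positive remainder plus a convergent sum with explicit limit $\tfrac12\int v^2\mathrm{div}(\phi\vv)\,dxdt$, for which the key is the change of variables via $\zeta$ and the Lebesgue-point property of $t_j$; second, justifying the convergence of the nonlinear products such as $a(x,t,\nabla w^{\Delta})\cdot w^{\Delta}\nabla\phi$ and $G_\eps(w^{\Delta},\nabla w^{\Delta})w^{\Delta}\phi$, which combine weak $L^{p'}$-convergence with the strong $L^{s}$-convergence of $w^{\Delta}$ guaranteed by Lemma~\ref{strgcon} together with the uniform $L^\infty$-bound proved in Step 1 of Theorem~\ref{thm:approximate}.
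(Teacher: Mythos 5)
The paper does not actually provide a proof of this lemma; it states it and then refers the reader to \cite[Lemma 4.8]{CNO17} for the ensuing argument, so there is no in-paper proof for direct comparison. Your proposal is a Minty-type argument, which is the natural and expected strategy, but as written it is an outline with several gaps that would need to be closed before it could be called a proof.

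Two concrete issues. First, the claim that the remainders $\mathcal R_j$ are nonnegative requires $\phi\geq 0$, which is not part of the hypothesis as stated (though it is almost certainly the intended case); without this sign the one-sided conclusion $\limsup\le$ has no source, since the remaining contributions are asserted to \emph{converge}, not to have a definite sign. Second, and more seriously, your final comparison step --- ``testing [the limit equation] against $v\phi$ \ldots produces exactly the same right-hand side'' --- is asserted, not checked. Your jump-term analysis produces a limiting contribution $\pm\tfrac12\int_{Q_T} v^2\,\mathrm{div}(\phi\vv)\,dxdt$ on the $w^\Delta$ side, whereas the weak chain rule for $v$ on the genuinely moving domain $Q_T$ (cf.\ the derivation around \eqref{t0}, where the Reynolds boundary term drops because $v$ vanishes on $\partial\Omega_t$) gives $\int_{Q_T}\partial_t v\cdot v\phi\,dxdt=-\tfrac12\int_{Q_T}v^2\partial_t\phi\,dxdt$ with no such term. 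You therefore need to verify that this $\mathrm{div}(\phi\vv)$ contribution cancels against a matching contribution coming from $\int\mathrm{div}(v\vv)\,v\phi$ on the $v$ side (and against the analogous $\int\mathrm{div}(w^\Delta\vv)\,w^\Delta\phi$ term on the $w^\Delta$ side), which is a nontrivial bookkeeping exercise and is precisely where a sign error would either prove or disprove the inequality. Related to this, the Riemann-sum convergence of $\sum_j(t_j-t_{j-1})\int_{\Omega_{t_{j-1}}}(w^{(j-1)}(t_j^-))^2\,\mathrm{div}(\phi\vv)\,dy$ to $\int_{Q_T}v^2\,\mathrm{div}(\phi\vv)\,dxdt$ requires control of $w^\Delta(\cdot,t_j^-)$ at the \emph{specific} junction times, which does not follow from the $L^1(\widehat Q)$ convergence of Lemma~\ref{strgcon} alone; you would need some uniform-in-time equicontinuity of $t\mapsto w^\Delta(t)$ (for instance in $W^{-1,p'}$, which is available from \eqref{S10} and the Simon-type estimate, upgraded to $L^1$ via the $L^\infty$ bound). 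In short, the strategy is right, but the two points you flag as ``the hard part'' are exactly the points where the proof currently does not close, and the comparison with the limit identity for $v$ needs to be written out term-by-term before the inequality can be claimed.
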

	
	Then, we can now use the same arguments as in \cite[Lemma 4.8]{CNO17} to obtain $$\overline{a}(x,t,\nabla v)=a(x,t,\nabla v) \text{ a.e. in } Q_T,$$
	hence (ii).
	
	\medskip
	\item \underline{Proof of limit (iii)}.	From the boundedness of $G_\eps$, we have
	\begin{equation*}
		G_\eps(w^\Delta, \na w^{\Delta}) \rightharpoonup \overline{g} \quad \text{in}\quad L^{p'}(\wh{Q}).
	\end{equation*}
	It remains to prove $\overline{g}=G_\varepsilon(v,\nabla v)$ a.e. in $Q_T$. Since $G_\varepsilon$ is a continuous function with respect to $w$ and $\nabla w$, by classical results (see e.g. \cite{Lio69}), the sequence $G_\varepsilon(w^\Delta,\nabla w^\Delta)\rightharpoonup G_\varepsilon(v,\nabla v)$ in $L^1(\wh{Q})$ if we show that the sequence $\{\nabla w^\Delta\}$ converges to $\nabla v$ a.e. as $\Delta\to 0$. This property is obtained as we show that $\{\nabla w^\Delta\}$ is a Cauchy sequence in measure, see \cite{RE65}, i.e. for all $\mu>0$
	\begin{equation}\label{S19}
	\text{meas}\{(x,t)\in \wh{Q}:|\nabla w^\Delta-\nabla w^{\Delta'}|\geq\mu\}\to 0,\quad \text{as }\Delta, \Delta'\to 0.
	\end{equation}
	Let us denote by $\mathcal A$ the subset of $\wh{Q}$ involved in \eqref{S19}. Let $k>0$ and $\eta>0$, we have
	\begin{equation*}\label{S20}
	\mathcal A\subset \mathcal A_1\cup \mathcal A_2\cup \mathcal A_3\cup\mathcal A_4,
	\end{equation*}
	where
	\begin{align*}
	\mathcal A_1&=\{|\nabla w^\Delta|\geq k\},\\
	\mathcal A_2&=\{|\nabla w^{\Delta'}|\geq k\},\\
	\mathcal A_3&=\{|w^\Delta-w^{\Delta'}|\geq \eta\},\\
	\mathcal A_4&=\{|\nabla w^\Delta-\nabla w^{\Delta'}|\geq \mu, |\nabla w^\Delta|\leq k,|\nabla w^{\Delta'}|\leq k,|w^\Delta-w^{\Delta'}|\leq \eta\}.
	\end{align*}
	By repeating the arguments in Lemma \ref{gradients} we have \eqref{S19}.
	\end{itemize}
	{\bf Step 3: Recovery initial conditions.}
	
	We refer the reader to \cite[Proposition 4.9]{CNO17} for a proof that $v$ in {\bf Step 2} is a weak solution of problem \eqref{approx} and furthermore, $v(t)\to u_{0,\varepsilon}$ a.e. as $t\to 0$.
\end{proof}
\section{An Aubin-Lions lemma in moving domains}\label{appendix2}
This appendix provides a proof of the Aubin-Lions lemma in Lemma \ref{AL-lemma}. We follow the ideas from \cite{Mou16}. For any $\delta>0$, we write $\Omega_t^\delta = \{x\in \Omega_t: d(x,\partial\Omega_t)> \delta\}$ and 
\[
Q_T^\delta = \bigcup\limits_{t\in (0,T)}\Omega_t^\delta \times \{t\}.
\]
Let $\varphi: \mathbb R^d \to \mathbb R$ be a $C^\infty_c$ function such that
\begin{itemize}
	\item $\varphi$ is radially symmetric;
	\item $\mathrm{supp}(\varphi)\subset B(0,1)$;
	\item $\int_{\mathbb R^d}\varphi(x)dx = 1$.
\end{itemize}
\newcommand{\ve}{\varphi^\varepsilon}
We define the scaled mollifier as $\ve(x) = \varepsilon^{-d}\varphi(x/\varepsilon)$ and for any distribution $g\in \mathcal D'(Q_T)$ we have the convolution
\begin{equation*}
(g(\cdot,t)*\ve)(x) = \int_{\Omega_t}g(t,x-y)\ve(y)dy = \int_{\R^d}g(x-y,t)\ve(y)dy
\end{equation*}
defined on $\Omega_t^\varepsilon$, and consequently on $\Omega_t^\delta$ for all $\delta < \varepsilon$ by trivial extension.
\begin{lemma}\label{lemma_lim}
	Let $\delta > 0$. If $\{\na u_n\}$ is bounded in $L^p(Q_T)$ for some $p\geq 1$, then 
	\begin{equation}\label{lim}
	\lim_{\varepsilon\to 0}\sup_{n\geq 1}\|u_n*\ve - u_n\|_{L^p(Q_T^\delta)} = 0.
	\end{equation}
\end{lemma}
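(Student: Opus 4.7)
The strategy is the classical estimate of the mollification error by the gradient, carefully adapted to the moving domain $\Omega_t^\delta$. The only role played by the moving geometry is to force the restriction $\varepsilon<\delta$, which guarantees that all shifts $x-sy$ with $x\in\Omega_t^\delta$ and $|y|<\varepsilon$ stay inside $\Omega_t$, so that every convolution and change of variable is well defined.

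First I would reduce to the case of smooth $u_n$. Since $u_n$ has $\nabla u_n\in L^p(Q_T)$ and (from the context of Lemma \ref{AL-lemma}) belongs to $L^p(0,T;W^{1,p}_0(\Omega_t))$, I extend $u_n$ by zero outside $\Omega_t$ and approximate in $L^p(0,T;W^{1,p}(\mathbb R^d))$ by functions smooth in $x$. By continuity of $v\mapsto v\ast\varphi^\varepsilon-v$ on $L^p$, it suffices to prove the inequality for smooth representatives.

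Next, for $\varepsilon<\delta$, $t\in(0,T)$ and $x\in\Omega_t^\delta$, I use the fundamental theorem of calculus along the segment from $x$ to $x-y$:
\[
u_n(x-y,t)-u_n(x,t)=-\int_0^1 y\cdot \nabla u_n(x-sy,t)\,ds,\qquad |y|<\varepsilon.
\]
Since $\varphi^\varepsilon$ is a probability density supported in $B(0,\varepsilon)$, Jensen's inequality with the convex function $|\cdot|^p$ gives
\[
|u_n\ast\varphi^\varepsilon(x,t)-u_n(x,t)|^p\le \int_{B(0,\varepsilon)}|y|^p\int_0^1|\nabla u_n(x-sy,t)|^p\,ds\,\varphi^\varepsilon(y)\,dy\le \varepsilon^p\int_{B(0,\varepsilon)}\!\!\int_0^1|\nabla u_n(x-sy,t)|^p ds\,\varphi^\varepsilon(y)dy.
\]

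Then I integrate over $x\in\Omega_t^\delta$ and apply Fubini. For each fixed $s\in[0,1]$ and $|y|<\varepsilon<\delta$, the translation $x\mapsto x-sy$ maps $\Omega_t^\delta$ into $\Omega_t$ (since $|sy|<\delta$), so the change of variable $z=x-sy$ gives
\[
\int_{\Omega_t^\delta}|\nabla u_n(x-sy,t)|^p\,dx\le \int_{\Omega_t}|\nabla u_n(z,t)|^p\,dz.
\]
Using $\int_{B(0,\varepsilon)}\varphi^\varepsilon(y)\,dy=1$ and integrating in $t$, I obtain
\[
\|u_n\ast\varphi^\varepsilon-u_n\|_{L^p(Q_T^\delta)}^p\le \varepsilon^p\|\nabla u_n\|_{L^p(Q_T)}^p,
\]
which yields the desired limit \eqref{lim} uniformly in $n$ by the assumed uniform bound on $\{\nabla u_n\}$.

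The only subtlety, and hence the main (mild) obstacle, is the geometric bookkeeping: one must keep $\varepsilon<\delta$ so that both the mollification at a point $x\in\Omega_t^\delta$ and the subsequent translation-invariance argument use only values of $u_n$ inside $\Omega_t$, where the gradient bound is available. Since we are taking $\varepsilon\to 0$ while $\delta$ is fixed, this restriction is harmless.
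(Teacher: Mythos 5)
Your proof is correct and follows essentially the same route as the paper's: express the mollification error via the fundamental theorem of calculus along the segment from $x$ to $x-y$, bound by the gradient, and use translation invariance together with the restriction $\varepsilon<\delta$ to stay inside $\Omega_t$. You carry out the $L^p$ bookkeeping (two applications of Jensen, Fubini, and the explicit constant $\varepsilon^p$) more explicitly than the paper, which states the final bound somewhat loosely, so your version is in fact the cleaner write-up of the same argument.
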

\begin{proof}
	By definition and the fact that $\int_{\R^d}\ve(y)dy = 1$ we have 
	\begin{equation*}
	\begin{aligned}
	|(u_n*\ve)(x,t) - u_n(x,t)| &\leq \int_{\R^d} |u_n(x-y,t)-u_n(x,t)||\ve(y)|dy\\
	&\leq \int_{\R^d}|y||\nabla u_n(s(x-y) + (1-s)x,t)||\ve(y)|dy\\
	&= \int_{|y| \leq \varepsilon}|y||\nabla u_n(s(x-y) + (1-s)x,t)||\ve(y)|dy.
	\end{aligned}
	\end{equation*}
	Integrating the above inequality over $Q_T^\delta$ and using the fact that $\{\nabla u_n\}$ is bounded in $L^p(Q_T)$, we get 
	\begin{equation*}
	\sup_{n\geq 1}\|u_n *\ve - u_n\|_{L^p(Q_T^\delta)} \leq C\int_{Q_T}\int_{|y|\leq \varepsilon}|y||\ve(y)|dydxdt,
	\end{equation*}
	and consequently \eqref{lim} as $\varepsilon\to 0$.
\end{proof}
\begin{lemma}[A local compactness lemma]\label{local}
	Assume all the conditions in Lemma \ref{AL-lemma} are fulfilled. Then there exists $\delta_0>0$ small enough such that for any $\delta\leq \delta_0$, $\{u_n\}$ is precompact in $L^s(Q_T^\delta)$ for all $1\leq s < p$.
\end{lemma}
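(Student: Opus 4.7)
The plan is to reduce the compactness of $\{u_n\}$ on $Q_T^\delta$ to that of a spatial mollification $u_n^\varepsilon(x,t):=(u_n(\cdot,t)*\varphi^\varepsilon)(x)$ on fixed parabolic cylinders compactly contained in $Q_T$, and then transfer back to $\{u_n\}$ via Lemma \ref{lemma_lim}. Since the flow $\zeta$ is smooth and $[0,T]$ is compact, I can choose $\delta_0>0$ small enough that for every $\delta\leq\delta_0$ the set $\overline{Q_T^\delta}$ is covered by finitely many open cylinders $C_k=B_k\times J_k$ satisfying $(B_k+B(0,2\eta_k))\times\overline{J_k}\Subset Q_T$ for some $\eta_k>0$; here the smoothness hypothesis $\vv\in C^1(\R^d,C^0(\R))$ is essential to guarantee a uniform ``thickness'' $\eta_k$. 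By a finite diagonal extraction it suffices to show that $\{u_n|_{C_k}\}$ is precompact in $L^s(C_k)$ for each such cylinder.

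Fix one cylinder $C=B\times J$ with $(B+B(0,2\eta))\times\overline{J}\Subset Q_T$ and fix $\varepsilon<\eta$; then $u_n^\varepsilon$ is well-defined on $(B+B(0,\eta))\times J$. I will verify the Kolmogorov--Riesz criterion for $\{u_n^\varepsilon|_C\}$. The uniform $L^\infty$-bound follows from $\|u_n^\varepsilon\|_{L^\infty(C)}\leq\|\varphi^\varepsilon\|_{L^{q'}(\R^d)}\|u_n\|_{L^q(Q_T)}$, and spatial equicontinuity from the fact that $\nabla_x u_n^\varepsilon=(\nabla u_n)*\varphi^\varepsilon$ is uniformly bounded in $L^\infty(C)$. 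For temporal equicontinuity I invoke \eqref{time_derivative}: for any $x\in B$ and any $\tau\in\mathcal{D}(J)$, the test function $\psi(y,s):=\tau(s)\varphi^\varepsilon(y-x)$ satisfies $\mathrm{supp}(\psi)\subset B(x,\varepsilon)\times\mathrm{supp}(\tau)\Subset Q_T$, hence
\begin{equation*}
\left|\int_J u_n^\varepsilon(x,s)\,\tau'(s)\,ds\right|=\left|\int_{Q_T}u_n\,\partial_s\psi\,dyds\right|\leq C\sup_s\|\varphi^\varepsilon(\cdot-x)\|_{H^m(\Omega_s)}\|\tau\|_{L^\infty(J)}\leq C(\varepsilon)\|\tau\|_{L^\infty(J)}.
\end{equation*}
This shows that $u_n^\varepsilon(x,\cdot)$ is of uniformly bounded variation on $J$ with $|u_n^\varepsilon(x,\cdot)|_{BV(J)}\leq C(\varepsilon)$ independent of $n$ and $x\in B$. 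The elementary BV estimate $\int_{J'}|f(t+h)-f(t)|\,dt\leq h\,|f|_{BV(J)}$ then yields $\|u_n^\varepsilon(\cdot,\cdot+h)-u_n^\varepsilon(\cdot,\cdot)\|_{L^1(C')}\leq C(\varepsilon)|B|\,h$ uniformly in $n$ for any $C'\Subset C$.

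Kolmogorov--Riesz therefore gives precompactness of $\{u_n^\varepsilon|_C\}$ in $L^1(C)$, and interpolating against the uniform $L^\infty(C)$-bound promotes this to precompactness in $L^{s'}(C)$ for every $1\leq s'<\infty$. On the other hand, Lemma \ref{lemma_lim} applied with exponent $q$ yields $\sup_n\|u_n^\varepsilon-u_n\|_{L^q(Q_T^\delta)}\to 0$ as $\varepsilon\to 0$. A standard diagonal extraction along a sequence $\varepsilon_j\to 0$ then produces a subsequence of $\{u_n\}$ that is Cauchy in $L^s(C)$ for every $1\leq s<q$, and applying this on each $C_k$ of the finite cover and extracting finitely many times in succession concludes the proof.

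The main technical obstacle is the third bullet in the Kolmogorov--Riesz check: constructing test functions $\psi$ which are genuinely compactly supported inside the moving cylinder $Q_T$ so that \eqref{time_derivative} can be applied, and then squeezing a uniform time-translation estimate out of the resulting BV regularity. All other ingredients are either direct mollifier estimates or routine Kolmogorov--Riesz bookkeeping; the moving-domain geometry only bites in the cylinder-cover construction, where the uniform thickness $\eta$ is secured by the smoothness of the velocity field together with compactness of $[0,T]$.
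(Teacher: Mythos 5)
The architecture you propose (mollify, prove compactness of $\{u_n^\varepsilon\}$ on fixed cylinders via Kolmogorov--Riesz with a BV-in-time estimate extracted from \eqref{time_derivative}, then transfer back by Lemma \ref{lemma_lim}) is sound and is in substance the same as the paper's proof, which establishes a uniform $W^{1,1}(Q_T^\delta)$ (really, BV) bound on $u_n * \varphi^\varepsilon$ and invokes a compact Sobolev embedding; your version is a cylinder-local, unpacked Kolmogorov--Riesz form of that argument, and the separated test functions $\psi(y,s)=\tau(s)\varphi^\varepsilon(y-x)$ are a clean device to pull the temporal translation estimate out of \eqref{time_derivative} pointwise in $x$.

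However, two of your intermediate claims are false, because the hypothesis of Lemma \ref{AL-lemma} only gives $u_n$ bounded in $L^q(0,T;W_0^{1,q}(\Omega_t))$ and provides no $L^\infty$-in-time control. The asserted inequality $\|u_n^\varepsilon\|_{L^\infty(C)}\leq\|\varphi^\varepsilon\|_{L^{q'}(\R^d)}\|u_n\|_{L^q(Q_T)}$ does not hold: the correct pointwise bound is $|u_n^\varepsilon(x,t)|\leq\|\varphi^\varepsilon\|_{L^{q'}(\R^d)}\|u_n(\cdot,t)\|_{L^q(\Omega_t)}$, whose right-hand side lies in $L^q(0,T)$ but not $L^\infty(0,T)$. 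The same defect invalidates the claim that $\nabla_x u_n^\varepsilon=(\nabla u_n)*\varphi^\varepsilon$ is uniformly bounded in $L^\infty(C)$, and hence the subsequent "interpolation against the $L^\infty(C)$ bound" that you use to pass from $L^1$ to all $L^{s'}$, $s'<\infty$. Both errors are repairable but must be fixed: the spatial translation estimate in Kolmogorov--Riesz only requires $\nabla u_n^\varepsilon$ bounded in $L^1(C)$, which does follow from the hypothesis via Young's inequality; and the final interpolation should be carried out against the uniform $L^q(C)$ bound on $u_n^\varepsilon$, which yields precompactness in $L^s(C)$ only for $1\leq s<q$ — but this is exactly what the lemma asserts, the $p$ in the statement of Lemma \ref{local} being the $q$ of Lemma \ref{AL-lemma}. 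With these replacements your proof goes through.
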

\begin{proof}
	We first prove that for any fixed $\varepsilon<\delta_0$, the sequence $\{u_n*\ve \}_{n}$ is precompact in $L^1(Q_T^\delta)$. Indeed, using the condition \eqref{time_derivative}, and the fact that $\ve$ is radially symmetric we have
	\begin{equation*}
	\begin{aligned}
	\left|\int_{Q_T} \psi\partial_t(u_n*\ve)dxdt \right| & = \left|\int_{Q_T} (\psi *\ve)\partial_t u_n dxdt \right|\\
	&\leq C\sup_{t\in (0,T)}\|\psi*\ve\|_{H^m(\Omega_t)}\\
	&\leq C_\varepsilon\sup_{t\in(0,T)}\|\psi\|_{L^\infty(\Omega_t)}\\
	&\leq C_\varepsilon\|\psi\|_{L^\infty(Q_T)}.
	\end{aligned}
	\end{equation*}
	
	By duality, we get that $\{\partial_t (u_n*\ve) \}_{n}$ is bounded in $L^1(Q_T^\delta)$. From the assumption of $u_n$, we obtain that $\{u_n*\ve\}_{n}$ and $\{\nabla(u_n*\ve) \}_n$ are bounded in $L^1(Q_T^\delta)$. Therefore we have $\{u_n*\ve \}_n$ is bounded in $W^{1,1}(Q_T^\delta)$, and thus, by the compact embedding $W^{1,1}(Q_T^\delta) \hookrightarrow L^1(Q_T^\delta)$ we get that $\{u_n*\ve\}_n$ is a Cauchy sequence in $L^1(Q_T^\delta)$.
	
	By applying estimate \eqref{lim} in Lemma \ref{lemma_lim} and by writing 
	\begin{equation*}
	\begin{aligned}
	&\|u_n - u_m\|_{L^1(Q_T^\delta)}
	\\ &\leq \|u_n - u_n*\ve\|_{L^1(Q_T^\delta)} + \|u_n*\ve - u_m*\ve\|_{L^1(Q_T^\delta)} + \|u_m*\ve - u_m\|_{L^1(Q_T^\delta)}
	\end{aligned}
	\end{equation*}
	we obtain that $\{u_n\}_n$ is precompact in $L^1(Q_T^\delta)$. Using the boundedness of $\{u_n\}_n$ in $L^p(Q_T^\delta)$ and interpolation we obtain the precompactness of $\{u_n\}_n$ in $L^s(Q_T^\delta)$ for all $1\leq s < p$.
\end{proof}

We will also use the following result from \cite{Mou16}.
\begin{lemma}\cite[Proposition 8]{Mou16}\label{Mou}
	If $\{u_n\}_n$ and $\{\nabla u_n\}_n$ are bounded in $L^p(Q_T)$ and $\{u_n\}_n$ is precompact in $L^p(Q_T^\delta)$ for all $\delta < \delta_0$, then $\{u_n\}_n$ is precompact in $L^p(Q_T)$.
\end{lemma}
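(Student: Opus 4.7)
The plan is to split $Q_T$ into the interior piece $Q_T^\delta$, where precompactness is already granted by hypothesis, and the thin boundary layer $Q_T\setminus Q_T^\delta$, whose contribution I control uniformly in $n$ using the $L^p$-bound on $\nabla u_n$ together with the (implicit) zero trace of $u_n$ on $\Sigma_T$ that holds in every invocation of this lemma in the paper, since $\{u_n\}$ is always drawn from $L^q(0,T;W^{1,q}_0(\Omega_t))$.

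The first key step is a uniform boundary-layer estimate
\[
\sup_n \|u_n\|_{L^p(Q_T\setminus Q_T^\delta)} \longrightarrow 0 \quad \text{as} \quad \delta\to 0.
\]
This rests on a slice-wise Hardy-type inequality
\[
\|u\|_{L^p(\Omega_t\setminus \Omega_t^\delta)} \leq C(\Omega_0,T)\,\delta\,\|\nabla u\|_{L^p(\Omega_t)} \quad \text{for every } u\in W^{1,p}_0(\Omega_t),
\]
which follows from the fundamental theorem of calculus along inward normals in a tubular neighbourhood of $\partial\Omega_t$. To obtain a constant independent of $t\in[0,T]$, I would mimic the change-of-variable trick of Lemma \ref{embedding}: transport $u$ back to the fixed smooth reference domain $\Omega_0$ via $\zeta_t^{-1}$, apply the classical Hardy/Poincaré inequality there, and transport forward, picking up only the Jacobian factors $a(T),b(T)$ from $\zeta$. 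Integrating over $t\in(0,T)$ and using the uniform $L^p(Q_T)$-bound on $\nabla u_n$ then yields the displayed tail estimate.

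With this uniform smallness at hand, precompactness of $\{u_n\}$ in $L^p(Q_T)$ follows by a standard finite $\eta$-net argument. Given $\eta>0$, first choose $\delta<\delta_0$ so that the boundary-layer term is bounded by $\eta/3$ uniformly in $n$. The hypothesis then supplies a finite set $\{u_{n_1},\ldots,u_{n_N}\}$ that $\eta/3$-covers $\{u_n\}$ in $L^p(Q_T^\delta)$. Extending each $u_{n_k}$ by zero on $Q_T\setminus Q_T^\delta$, and applying the tail estimate to both $u_n$ and $u_{n_k}$ together with the triangle inequality, one checks that these extensions form a finite $\eta$-net for $\{u_n\}$ in $L^p(Q_T)$; total boundedness closes the argument.

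The main obstacle is the boundary-layer Hardy estimate with a constant uniform in $t\in[0,T]$. The cleanest route is the pullback-to-$\Omega_0$ trick above, which works because $\zeta\in C([0,T];C^1(\R^d))$ gives uniform two-sided Jacobian bounds and uniform $C^1$-regularity of $\partial\Omega_t$; an equivalent option is to cover $\bigcup_{t\in[0,T]}\partial\Omega_t$ by finitely many local straightening charts with uniformly bounded norms, but this is technically heavier. A small additional point is that Lemma \ref{Mou} as stated does not spell out the zero boundary trace; this hypothesis is automatic in every use within the paper, and without it the claim is false (constant sequences would already be a counterexample).
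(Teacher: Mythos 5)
The paper does not prove this lemma at all: it is imported verbatim from \cite[Proposition 8]{Mou16}, so there is no internal proof to compare against. Judged on its own, your architecture (interior piece handled by hypothesis, uniform smallness of the boundary layer, finite $\eta$-net) is exactly the right one and the net argument at the end is fine. The genuine problem is your treatment of the boundary layer. You prove the tail estimate only under the extra hypothesis $u_n\in W^{1,p}_0(\Omega_t)$, and you justify adding it by asserting that the lemma is false without it, ``constant sequences'' being a counterexample. That assertion is wrong: a constant sequence (indeed any sequence taking values in a fixed finite-dimensional bounded set) is trivially precompact, so it is not a counterexample, and in fact the lemma is true exactly as stated for the smooth moving domains of this paper. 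The correct trace-free route replaces your Hardy inequality by the slice-wise Sobolev embedding: $\|u_n(t)\|_{L^{p^*}(\Omega_t)}\leq C\|u_n(t)\|_{W^{1,p}(\Omega_t)}$ with a constant uniform in $t$ (by the same pullback to $\Omega_0$ you invoke, using that $\Omega_t=\zeta_t(\Omega_0)$ is a uniform extension domain), followed by H\"older's inequality on $\Omega_t\setminus\Omega_t^\delta$, whose measure is $O(\delta)$ uniformly in $t$; this gives
\begin{equation*}
\int_{Q_T\setminus Q_T^\delta}|u_n|^p\,dxdt\;\leq\;\sup_{t}|\Omega_t\setminus\Omega_t^\delta|^{\,1-p/p^*}\int_0^T\|u_n(t)\|_{L^{p^*}(\Omega_t)}^p\,dt\;\leq\;C\,\delta^{1-p/p^*},
\end{equation*}
uniformly in $n$, with no boundary condition needed (for $p\geq d$ use any finite exponent in place of $p^*$). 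This is essentially the argument of the cited reference.

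Your Hardy-type detour is not wrong where the lemma is actually applied in this paper --- there the $u_n$ do lie in $L^q(0,T;W^{1,q}_0(\Omega_t))$, and the pullback argument for a $t$-uniform constant is sound (modulo the minor point that $\zeta_t^{-1}(\Omega_t\setminus\Omega_t^\delta)$ is only contained in a layer of comparable width $c\delta$ around $\partial\Omega_0$, not equal to $\Omega_0\setminus\Omega_0^\delta$). But as a proof of the lemma as stated it proves a strictly weaker statement, and the justification offered for weakening it is incorrect. Replace the Hardy step by the Sobolev--H\"older step above and the proof is complete and matches the generality of the statement.
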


We have all the ingredients to prove Lemma \ref{AL-lemma}.
\begin{proof}[Proof of Lemma \ref{AL-lemma}]
	The proof follows directly from Lemmas \ref{local} and \ref{Mou} above.
\end{proof}

\medskip
\par{\bf Acknowledgements:} 
We would like to thank the referees for their helpful and constructive comments, which help to improve the presentation of this paper.
 
This research is funded by Thuyloi University Foundation for Science and Technology under grant number TLU.STF.19-04.\\
The third author is supported by the International Research Training Group IGDK 1754 "Optimization and Numerical Analysis for Partial Differential Equations with Nonsmooth Structures", funded by the German Research Council (DFG)  project number 188264188/GRK1754 and the Austrian Science Fund (FWF) under
grant number W 1244-N18. This work is partially supported by NAWI Graz.

\end{document}